\renewenvironment{thebibliography}[1]{%
  \begin{oldthebibliography}{#1}%
  \normalsize
}{\end{oldthebibliography}}
\title[Quaternionic big Heegner points over totally real fields]%
      {Quaternionic big Heegner points\\ over totally real fields}
\author{Ignacio M. Jiménez}
\date{}
\address{Dipartimento di Matematica, Universit\`a di Genova\\ via Dodecaneso 35, 16146, Genova, Italy}
\email{ignacio.munoz.jimenez@edu.unige.it}
\renewcommand{\frak}{\mathfrak}
\renewcommand{\hat}{\widehat}
\renewcommand{\tilde}{\widetilde}
\renewcommand{\t}[1]{\text{#1}}
\renewcommand{\bf}[1]{\textbf{#1}}
\renewcommand{\mod}[1]{ (\t{mod} #1)}
\newcommand{\bb}[1]{\mathbb{#1}}
\newcommand{\mcal}[1]{\mathcal{#1}}
\newcommand{\LL}{\left}
\newcommand{\RR}{\right}
\newcommand{\Hom}{\text{\normalfont{Hom}}}
\newcommand{\Gal}{\text{\normalfont{Gal}}}
\newcommand{\<}[1]{\langle #1 \rangle}
\newcommand{\Div}{\text{\normalfont{Div}}}
\newcommand{\longtwoheadrightarrow}{\relbar\joinrel\twoheadrightarrow}
\theoremstyle{definition}
\newtheorem{definition}{Definition}[section]
\theoremstyle{definition}
\theoremstyle{definition}
\newtheorem{remark}{Remark}[section]
\theoremstyle{definition}
\theoremstyle{definition}
\newtheorem{proposition}{Proposition}[section]
\theoremstyle{definition}
\newtheorem{theorem}{Theorem}[section]
\theoremstyle{definition}
\newtheorem{lemma}{Lemma}[section]
\theoremstyle{definition}
\newtheorem{corollary}{Corollary}[section]
\theoremstyle{definition}
\newtheorem{assumption}{Assumption}[section]
\theoremstyle{definition}
\theoremstyle{definition}
\theoremstyle{definition}
\newtheorem{conjecture}{Conjecture}[section]
\newtheorem{introthm}{Theorem}
\subjclass[2020]{11F67, 11G18}
\keywords{Heegner points; Hilbert modular forms; Hida families; \(p\)-adic \(L\)-functions}
\begin{document}

\begin{abstract}
    In this work, we extend Howard's construction of compatible families of Heegner points to the setting of towers of Gross curves and Shimura curves over totally real fields. Following the strategy of Longo and Vigni, our approach simultaneously treats totally definite and indefinite quaternion algebras. We then extend their interpolation methods to define big Heegner points attached to families of Hilbert modular forms of parallel weight under the weak Heegner hypothesis. Applying this construction, we build in the definite setting a totally real analogue of Longo--Vigni’s two-variable \(p\)-adic \(L\)-function, and in the indefinite setting, a system of big Heegner classes in the sense of Howard.
\end{abstract}

\maketitle

\tableofcontents

\section{Introduction} 

The goal of this work is to generalize to the totally real setting the constructions and results of \cite{LV10}. The first new contribution is an explicit version of their construction of a compatible family of Heegner points in towers, inspired by \cite{CL16}, which we extend beyond the rational case to totally real fields and for more general conductor. Building on this, we define big Heegner points attached to the Hida family of a Hilbert modular form of parallel weight under the \textit{weak} Heegner hypothesis.\\

In the remainder of the paper, we study arithmetic applications in both the (totally) definite and indefinite quaternionic settings. In the definite case, we construct a two-variable \(p\)-adic \(L\)-function, conjectured to interpolate the values of Rankin--Selberg \(L\)-functions corresponding to different specializations of the Hida family. This construction allows us to formulate an Iwasawa main conjecture in this setting. In the indefinite case, our main contribution is the construction of big Heegner classes via a big Kummer map, following the strategy of \cite{How06} and providing an alternative approach to those of \cite{Fou12} and \cite{Dis22}.

\subsection{Motivation and context}
Kolyvagin's strategy for studying Selmer groups of elliptic curves via Heegner points remains one of the major advances toward the Birch--Swinnerton--Dyer conjecture. Howard~\cite{How06} later extended this approach by introducing \textit{big Heegner points} to study the arithmetic of Hida families. These points interpolate the classical weight-two Heegner points and give rise to Euler systems for higher-weight ordinary modular forms. The construction was subsequently adapted to the rational quaternionic setting by Longo and Vigni~\cite{LV10}.\\

In their seminal work~\cite{BD96}, Bertolini and Darmon developed the quaternionic framework, replacing modular curves with Shimura curves, thereby relaxing the Heegner hypothesis and establishing a Gross--Zagier-type relation between Heegner points and special values of \(L\)-functions. Depending on the conductor of the elliptic curve, they distinguished two cases: the \textit{indefinite} and the \textit{definite} settings. In the former, Heegner points reflect information about the derivative of the Rankin--Selberg \(L\)-function; in the latter, they encode its special values. Geometrically, indefinite Shimura curves resemble modular curves, while definite ones (or \textit{Gross curves}) exhibit a fundamentally different behavior.\\

Following Longo and Vigni, this project extends Bertolini--Darmon's ideas to Hida families and seeks an analogue of their results in the totally real setting.

\subsection{Set-up}
Let \(F\) be a totally real field of degree \(d:=[F:\bb{Q}]\) and \(\frak{n}\) an ideal in \(\mcal{O}_F\). Fix a rational prime \(p>3\) coprime with \(\frak{n}\) and unramified in \(F\). Consider \(K/F\) a CM field of relative discriminant \(d_{K/F}\) coprime with \(\frak{n}p\). The field \(K\) determines a factorization
\[\frak{n}=\frak{n}^+ \frak{n}^-,\]
where the primes \(v|\frak{n}^+\) (resp. \(\frak{n}^-\)) are split (resp. inert) in \(K\). Assume \(\frak{n}^-\) squarefree (weak Heegner hypothesis) and denote \(r\) the number of prime factors dividing \(\frak{n}^-\).
We distinguish two scenarios:
\begin{itemize}
    \item The \textbf{(totally) definite} setting when \(r+d\equiv 0 \mod{2}\).
    \item The \textbf{indefinite} setting when \(r+d\equiv 1 \mod{2}\).
\end{itemize}
\vspace{3mm}

Fix an embedding \(\tau: F\hookrightarrow \bb{R}\). Consider \(B/F\) the unique quaternion algebra of discriminant \(\frak{n}^-\) such that, for all archimedean primes \(\sigma\neq \tau\), we have \(B\otimes_{F,\sigma} \bb{R}=\bb{H}\), where \(\bb{H}\) denotes Hamilton's quaternions. Notice we are in the definite setting when \(B\otimes_{F,\tau}\bb{R}=\bb{H}\) and in the indefinite setting when \(B\otimes_{F,\tau}\bb{R}=\t{M}_2(\bb{R})\), as it happens over \(\bb{Q}\).\\

Consider, for each \(m\geq 1\), an open compact subgroup \(U_{0,1}(\frak{n}^+,p^m)\) regarding the classical \(\Gamma_{0,1}(N,p^m)\) (see Section \ref{Review of Hilbert modular forms and Hida theory} for precise definitions). We define the double cosets
\[X_m^{(K)} := B^\times \backslash (\Hom_{F,\tau}(K,B)\times \hat{B}^\times) \slash U_{0,1}(\frak{n}^+,p^m),\]
and we define an action of the absolute Galois group \(G_K\)
\[[(f,g)]^\sigma := [(f,\hat{f}(x_\sigma)g)],\]
which mimics Shimura's reciprocity law, where \(\hat{f}\) denotes the adelization of \(f\) and \(\t{rec}_K(x_\sigma)=x\). Define also the natural projections
\[\alpha_m: X_m^{(K)} \longrightarrow X_{m-1}^{(K)}.\]

Choose an ideal \(\frak{c}\subset\mcal{O}_K\). Denote by \(H_\frak{c}\) the ring class field of conductor \(\frak{c}\) relative to \(K/F\). Denote \(F[m]\) the ray class field of modulus \(p^m\infty\) over \(F\) and, for a field \(A/F\), denote \(A[m]\) the field composition \(A\cdot F[m]\).

\subsection{Main results}

In this setting, we construct an explicit compatible family of Heegner points 
\(\{P_{\frak{c},m}\}_{\frak{c},m}\) in \(\{X_m^{(K)}\}_{m\geq 1}\) such that, for each ideal 
\(\frak{c}\subset \mcal{O}_F\) coprime to \(\frak{n}d_{K/F}\) and supported only at primes 
\(v\) inert in \(K\), one has
\[
\{P_{\frak{c},m}\}_\frak{c} \subset X_m^{(K)},
\]
and the following compatibility relations are satisfied:
\begin{enumerate}
    \item \textbf{Horizontal compatibility:}
    \begin{itemize}
        \item \(U_p(P_{\frak{c}p^{n-1},m}) = 
        \operatorname{tr}_{H_{\frak{c}p^{m+n}}[m+n]/H_{\frak{c}p^{m+n-1}}[m+n]} 
        (P_{\frak{c}p^n, m})\).
        \item \(T_v(P_{\frak{c},m}) = 
        \operatorname{tr}_{H_{\frak{c}v p^m}[m]/H_{\frak{c}p^m}[m]} 
        (P_{\frak{c}v, m})\).
    \end{itemize}
    \vspace{1mm}
    \item \textbf{Vertical compatibility:}
    \begin{itemize}
        \item \(U_p(P_{\frak{c},m-1}) = 
        \alpha_m \big(\operatorname{tr}_{H_{\frak{c}p^m}[m]/H_{\frak{c}p^{m-1}}[m]} 
        (P_{\frak{c},m})\big)\).
    \end{itemize}
    \vspace{1mm}
    \item \textbf{Galois compatibility:}
    \begin{itemize}
        \item \(P_{\frak{c},m}^\sigma = 
        \langle \vartheta(\sigma) \rangle P_{\frak{c},m}\).
    \end{itemize}
\end{enumerate}

Here \(v\nmid \frak{ncp}\) is a prime of \(\mcal{O}_F\) inert in \(K\). The operators 
\(U_p\), \(T_v\), and \(\langle \cdot \rangle\) are Hecke operators acting via Hecke 
correspondences on \(X_m^{(K)}\). See Section~\ref{Construction of big Heegner points} 
for precise definitions.\\

Next, we use this family to construct \textit{big Heegner points} attached to a Hida family of Hilbert modular forms. Let \(k\) be a positive integer, and fix a \(p\)-satabilized ordinary Hilbert cuspform \(f\) of parallel weight \(k\), trivial character and level \(\frak{n}\). By Hida's theory, there exists a big Galois representation associated with \(f\): a complete local Noetherian ring \(\mcal{R}\) and a rank-two \(\mcal{R}\)-module \(\mathbf{T}\) whose specializations recover the Galois representations attached to classical forms. Extending the ideas of Howard~\cite{How06}, one considers a twist \(\mathbf{T}^\dagger\) admitting a perfect alternating pairing
\[
\mathbf{T}^\dagger \times \mathbf{T}^\dagger \longrightarrow \mcal{R}(1).
\]

Let \(\mathbb{D}_m\) be the group of divisors on \(X_m^{(K)}\), and set \(\mathbb{D} := \varprojlim_m \mathbb{D}_m\)
under the projection maps. In Section \ref{Construction of big Heegner points}, we refine these into modules 
\[\mathbf{D}_m^\dagger \t{\hspace{0.5cm} and \hspace{0.5cm}} \mathbf{D}^\dagger := \varprojlim_m \mathbf{D}_m^\dagger,\]
endowed with an action of Hida’s big ordinary Hecke algebra \(\frak{h}_\infty^{\mathrm{ord}}\). The projected family of points previously built satisfies a horizontal compatibility relation of Kolyvagin type. This is the content of Theorem \ref{Main theorem A} below.
\begin{introthm}
    There is a family of points \(\{\mcal{P}_\frak{c}\}_\frak{c}\subset \bf{D}^\dag\), for \(\frak{c}\) conductor coprime with \(\frak{n}d_{K/F}\) and whose prime divisors are inert in \(K\), such that
    \[\mcal{P}_\frak{c} \in H^0(\Gal(K^\t{ab}/H_\frak{c}),\bf{D}^\dag),\]
    and satisfying the following horizontal compatibility relations:
    \begin{align*}
        U_p(\mcal{P}_\frak{c}) &= \t{cor}_{H_{\frak{c}p}/H_\frak{c}}(\mcal{P}_{\frak{c}p}),\\
        T_v(\mcal{P}_\frak{c}) &= \t{cor}_{H_{\frak{c}v}/H_\frak{c}}(\mcal{P}_{\frak{c}v}),
    \end{align*}
    for each \(v\nmid \frak{nc}p\) inert.
\end{introthm}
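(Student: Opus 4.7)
The plan is to lift the compatible finite-level family $\{P_{\frak{c},m}\}$ to an element of the inverse limit $\bf{D}^\dag$ by a $U_p$-renormalization in the spirit of Howard and Longo--Vigni, and then to read off Galois invariance and the horizontal Hecke relations from their level-$m$ analogues. The refined module $\bf{D}_m^\dag$ is tailored so that (i) $U_p$ acts invertibly on its ordinary part via Hida's idempotent $e^{\t{ord}}$, and (ii) the Galois action comes equipped with a critical twist designed to absorb the diamond-operator character $\langle \vartheta(\cdot) \rangle$ appearing in the Galois compatibility; both features will be exploited below.

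At each finite level, I set
\[
\mcal{P}_{\frak{c},m} := e^{\t{ord}} \cdot U_p^{-m} \cdot P_{\frak{c},m} \in \bf{D}_m^\dag,
\]
where the right-hand side is interpreted via the natural map $\bb{D}_m \to \bf{D}_m^\dag$. The central computation is to verify that $\alpha_m(\mcal{P}_{\frak{c},m}) = \mcal{P}_{\frak{c},m-1}$, which is exactly where the vertical compatibility
\[
U_p(P_{\frak{c},m-1}) = \alpha_m\bigl(\t{tr}_{H_{\frak{c}p^m}[m]/H_{\frak{c}p^{m-1}}[m]}(P_{\frak{c},m})\bigr)
\]
enters. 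After identifying the trace on the right with a Galois sum that, by the dagger twist on $\bf{D}_m^\dag$, collapses to a scalar multiple of $\alpha_m(P_{\frak{c},m})$, the renormalization by $U_p^{-m}$ turns the identity above into precisely the required $\alpha_m$-compatibility, so $(\mcal{P}_{\frak{c},m})_m$ defines an element $\mcal{P}_\frak{c} \in \bf{D}^\dag$.

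For Galois invariance, I fix $\sigma \in \Gal(K^\t{ab}/H_\frak{c})$ and use the Galois compatibility $P_{\frak{c},m}^\sigma = \langle \vartheta(\sigma) \rangle P_{\frak{c},m}$. By the definition of $\bf{D}_m^\dag$, the action of $\langle \vartheta(\sigma) \rangle$ is cancelled by the critical twist built into the Galois action, so $\mcal{P}_{\frak{c},m}^\sigma = \mcal{P}_{\frak{c},m}$ at every level $m$, and passing to the limit gives $\mcal{P}_\frak{c} \in H^0(\Gal(K^\t{ab}/H_\frak{c}), \bf{D}^\dag)$.

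The horizontal relations are then obtained by applying the level-$m$ Hecke identities to $\mcal{P}_{\frak{c},m}$ and $\mcal{P}_{\frak{c}p^n,m}$ (resp.\ $\mcal{P}_{\frak{c}v,m}$), and passing to the limit: the partial trace $\t{tr}_{H_{\frak{c}vp^m}[m]/H_{\frak{c}p^m}[m]}$ tends to the corestriction $\t{cor}_{H_{\frak{c}v}/H_\frak{c}}$ once the Galois invariance of $\mcal{P}_\frak{c}$ is invoked, and the same passage handles the $U_p$ identity via $H_{\frak{c}p^{m+n}}[m+n]/H_{\frak{c}p^{m+n-1}}[m+n]$. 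The hardest part throughout is the careful bookkeeping for the twist: one must verify that the finite-level trace identities for the $P_{\frak{c},m}$ translate cleanly into corestriction identities on the inverse limit, which relies on matching the $\vartheta$-twisted Galois action with the genuine Galois action on $\bf{D}^\dag$ through the reciprocity map, i.e., on the very structure that the dagger decoration was engineered to encode.
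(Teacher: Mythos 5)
Your proposal omits the key corestriction step, and this omission creates a genuine gap that propagates through every part of the argument. Recall that the finite-level point $P_{\frak{c},m}$ has conductor $\frak{c}p^{m}$, not $\frak{c}$, so its image $\mathbf{P}_{\frak{c},m}\in\mathbf{D}_m^\dagger$ lies in $H^0(\Gal(K^{\mathrm{ab}}/H_{\frak{c}p^m}),\mathbf{D}_m^\dagger)$ only; the twisted Galois compatibility $P_{\frak{c},m}^\sigma=\langle\vartheta(\sigma)\rangle P_{\frak{c},m}$ holds for $\sigma\in\Gal(\overline F/H_{\frak{c}p^m})$ and does \emph{not} extend to all of $\Gal(K^{\mathrm{ab}}/H_\frak{c})$. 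Your proposed finite-level element $e^{\t{ord}}U_p^{-m}P_{\frak{c},m}$ is therefore not $H_\frak{c}$-invariant, and the Galois-invariance paragraph is unjustified. The paper first applies the $\Theta$-twisted corestriction $\t{cor}_{H_{\frak{c}p^m}/H_\frak{c}}$ to $\mathbf{P}_{\frak{c},m}$ before inverting $U_p$; it is this corestriction, not the dagger twist alone, that brings the point down to $H_\frak{c}$.

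The same omission causes the vertical compatibility step to fail. You assert that the partial trace appearing in
\[
U_p(P_{\frak{c},m-1}) = \alpha_m\Bigl(\t{tr}_{H_{\frak{c}p^m}[m]/H_{\frak{c}p^{m-1}}[m]}(P_{\frak{c},m})\Bigr)
\]
``collapses to a scalar multiple of $\alpha_m(P_{\frak{c},m})$'' by the dagger twist, but that is not what is happening: the Galois conjugates of $P_{\frak{c},m}$ over $H_{\frak{c}p^{m-1}}[m]$ are genuinely distinct Heegner points (they are precisely the $|\mcal{O}_F/p\mcal{O}_F|$ summands of $U_p(P_{\frak{c},m-1})$, as in the proof of Proposition~\ref{Propiedad 1}), so the trace is a sum of distinct terms, not a scalar. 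Hence the purported identity $\alpha_m(\mcal{P}_{\frak{c},m})=\mcal{P}_{\frak{c},m-1}$ does not hold with your definition. The paper's route is to set $\mcal{P}_{\frak{c},m}:=\t{cor}_{H_{\frak{c}p^m}/H_\frak{c}}(\mathbf{P}_{\frak{c},m})$, after which one proves $\tilde\alpha_m(\mcal{P}_{\frak{c},m})=U_p(\mcal{P}_{\frak{c},m-1})$ (absorbing the partial trace into the corestriction), and only then defines $\mcal{P}_\frak{c}:=U_p^{-m}(\mcal{P}_{\frak{c},m})$ in $\mathbf{D}^\dagger$; the horizontal relations of the theorem then follow by passing the finite-level trace identities of Proposition~\ref{Compatibilidad} through the corestriction maps. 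You would need to reinsert that corestriction to salvage the argument.
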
\vspace{2mm}

Up to this point, the results apply to both the definite and indefinite quaternionic settings, that is, regardless of the parity of the number of primes dividing \(\frak{n}^-\). From now on, given the fundamentally different nature of the underlying objects, we study each setting separately.\\

This distinction reflects the sign of the functional equation (the global root number) of the Rankin--Selberg \(L\)-function attached to the Hida family and the quadratic extension \(K/F\). In the totally definite case the root number is \(+1\), leading to nontrivial central \(L\)-values and hence to \(p\)-adic \(L\)-functions, while in the indefinite case it is \(-1\), forcing vanishing at the center and motivating the construction of Heegner classes instead.

\subsubsection{Totally definite setting}

In the totally definite setting, the natural objects to consider are \emph{Gross curves}, as reviewed in Section~\ref{Gross curves and Shimura curves over totally real  fields}.  
Let \(K_\infty/K\) denote the \(p\)-anticyclotomic \(\mathbb{Z}_p\)-extension.  

Following \cite{LV10}, we construct a \emph{big theta element}
\[
\theta_\infty \in \mathcal{R}[[\mathrm{Gal}(K_\infty/K)]] =: \mathcal{R}_\infty,
\]
and define a corresponding \emph{two-variable \(p\)-adic \(L\)-function}
\[
\mathcal{L}_p(f/K) = \theta_\infty \cdot \theta_\infty^*.
\]
This function is conjectured to interpolate (up to explicit periods and local factors) the special values of the Rankin--Selberg \(L\)-functions associated with the data \((\chi,\frak{p})\), that is, a finite-order character \(\chi \in \mathrm{Gal}(K_\infty/K)\) and arithmetic prime \(\frak{p}\) of \(\mathcal{R}\). Denote the weight of the \(\frak{p}\)-specialization as \(k_\frak{p}\).

We formulate two conjectures in this setting.

\begin{conjecture}[Non-vanishing conjecture]
\[
\mathcal{L}_p(f/K;\chi,\mathfrak{p}) \neq 0 
\;\Longleftrightarrow\;
L(f_\mathfrak{p},\chi,k_\mathfrak{p}/2) \neq 0.
\]
\end{conjecture}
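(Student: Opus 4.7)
The plan is to reduce the conjecture to an explicit interpolation formula for $\mathcal{L}_p(f/K;\chi,\mathfrak{p})$ that expresses it as the product of the classical central value $L(f_\mathfrak{p},\chi,k_\mathfrak{p}/2)$ with an explicit non-vanishing factor. Once such a formula is in hand, both implications follow at once, since the asserted equivalence then becomes the statement that this auxiliary factor does not vanish at any interpolation point $(\chi,\mathfrak{p})$.

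First, I would unpack the definition $\mathcal{L}_p(f/K)=\theta_\infty\cdot\theta_\infty^*$ at the pair $(\chi,\mathfrak{p})$. The big theta element $\theta_\infty\in\mathcal{R}_\infty$ is obtained from the compatible family $\{\mathcal{P}_\mathfrak{c}\}_\mathfrak{c}$ of the preceding theorem by projecting onto the $f$-isotypic component of the space of Gross points; its image at $(\chi,\mathfrak{p})$ becomes a linear functional $\ell_\chi$ evaluated on a Jacquet--Langlands-transferred test vector $\phi_{f_\mathfrak{p}}$ living on the definite quaternion algebra $B$. Consequently, $\theta_\infty\cdot\theta_\infty^*$ specializes to a squared toric period of the form $|\ell_\chi(\phi_{f_\mathfrak{p}})|^2$.

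Second, I would invoke a Waldspurger-type formula in the totally real setting, available through work of File--Martin--Pitale, Hsieh, and Hung on central value formulas for Hilbert modular forms, to rewrite this squared toric period as
\[
\mathcal{L}_p(f/K;\chi,\mathfrak{p}) \;=\; C(\chi,\mathfrak{p})\cdot L(f_\mathfrak{p},\chi,k_\mathfrak{p}/2),
\]
where $C(\chi,\mathfrak{p})$ collects an archimedean period, a product of local zeta integrals at the finite places away from $p$, and a $p$-stabilization factor. The conjecture then reduces to showing $C(\chi,\mathfrak{p})\neq 0$. The weak Heegner hypothesis ensures that the contributions at primes dividing $\mathfrak{n}^-$ are non-vanishing unramified factors, while the standard choice of Gross-point test vector handles the primes dividing $\mathfrak{n}^+$.

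The principal obstacle is the local factor at $p$. In the Hida family, the $p$-stabilization introduces modified Euler factors that may degenerate at exceptional arithmetic primes and create trivial zeros of $\mathcal{L}_p$. A delicate Whittaker computation is needed to verify that, within the range of interpolation considered, these factors remain non-zero; following the approach of Chida--Hsieh, one can achieve this via an explicit local calculation combined with a uniform choice of test vector across $\Spec(\mathcal{R})$. Controlling this pointwise at every $(\chi,\mathfrak{p})$, rather than merely generically, is where the real difficulty lies.
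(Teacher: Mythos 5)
This statement is labelled a \emph{conjecture} in the paper, and the paper offers no proof of it: it is stated as an open problem, with the rational case ($F=\mathbb{Q}$) attributed to Castella--Longo \cite{CL16} and the totally real case left unresolved. So there is no ``paper's own proof'' to compare against, and your submission should not be read as a proof either.

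That said, your sketch is a faithful description of the \emph{expected} route, and it matches how the rational case is handled in \cite{CL16}: specialize $\theta_\infty\cdot\theta_\infty^*$ to a squared toric period on the definite quaternion algebra, apply a Waldspurger-type central value formula to identify that period with $C(\chi,\mathfrak{p})\cdot L(f_\mathfrak{p},\chi,k_\mathfrak{p}/2)$, and then argue that the explicit factor $C(\chi,\mathfrak{p})$ is nonzero. But every substantive step in this outline is precisely what remains to be done in the totally real setting: the paper does not establish an interpolation formula for $\mathcal{L}_p(f/K)$, only conjectures one; the required uniform-in-$(\chi,\mathfrak{p})$ Waldspurger formula compatible with the explicit Gross points constructed here has not been verified; and you yourself flag the nondegeneracy of the $p$-local factor across all arithmetic primes as ``where the real difficulty lies'' without resolving it. A proof proposal that reduces a conjecture to another unproven interpolation formula plus an unperformed local computation is an honest roadmap, not an argument. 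If you wish to contribute here, the concrete target is to prove the interpolation formula itself (the totally real analogue of \cite[Theorem~A]{CL16}) for the specific big theta element $\theta_\infty$ built in this paper, after which the equivalence would follow from an explicit computation of $C(\chi,\mathfrak{p})$.
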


This conjecture predicts a precise correspondence between the non-vanishing of the algebraic \(p\)-adic interpolation and the non-vanishing of the complex \(L\)-values. The weight 2 specializations are the totally real counterpart of \cite{BD96}, while the higher weight specializations should be compared to the totally real counterpart of the theta elements of \cite{CH16}, as proved in \cite{CL16} over \(\bb{Q}\).\\

Our second conjecture is an analogue of the two-variable Iwasawa Main Conjecture for the totally definite setting.

\begin{conjecture}[Two-variable Iwasawa Main Conjecture]
\[
(\mathcal{L}_p(f/K)) 
= 
\mathrm{Char}_{\mathcal{R}_\infty}
\Big(
\widetilde{H}^1_{f,\mathrm{Iw}}(K_\infty,\mathbf{A}^\dagger)^\vee
\Big),
\]
where \(\mathrm{Char}_{\mathcal{R}_\infty}\) denotes the characteristic ideal in the Iwasawa algebra \(\mathcal{R}_\infty\).
\end{conjecture}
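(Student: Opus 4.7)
The plan is to follow the Skinner--Urban blueprint for two-variable main conjectures, adapted to the totally definite quaternionic setting over a totally real field. I split the conjectured equality into the two divisibilities; these require fundamentally different techniques and together yield the claimed equality of ideals in \(\mcal{R}_\infty\), up to pseudo-null issues that one handles by working in codimension one.

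For the divisibility \((\mcal{L}_p(f/K))\subseteq \mathrm{Char}_{\mcal{R}_\infty}\bigl(\widetilde{H}^1_{f,\mathrm{Iw}}(K_\infty,\mathbf{A}^\dagger)^\vee\bigr)\), the natural tool is an Euler system bound. In the totally definite setting, however, the big Heegner classes of Theorem~A are not directly available: by construction they live on the indefinite side, where \(r+d\) has the opposite parity. The strategy is to perform Ribet-style level-raising at an auxiliary prime \(v\nmid \frak{n}p\) inert in \(K\), passing to the companion indefinite quaternion algebra \(B'\) of discriminant \(\frak{n}^- v\). On \(B'\), Theorem~A furnishes a Hida family \(\tilde f\) congruent to \(f\) modulo a height-one prime together with a compatible system \(\{\mcal{P}_\frak{c}'\}_\frak{c}\) of big Heegner classes. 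The crux is to establish first and second explicit reciprocity laws, generalizing \cite{BD96}, identifying the localization at \(v\) of these classes with a suitable twist of \(\theta_\infty\). From these, a Kolyvagin-type derived-class argument in the spirit of Howard's work will bound \(\widetilde{H}^1_{f,\mathrm{Iw}}(K_\infty,\mathbf{A}^\dagger)^\vee\) by the ideal generated by \(\mcal{L}_p(f/K)=\theta_\infty\theta_\infty^*\).

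For the reverse divisibility, I would proceed by specialization and patching. At every arithmetic prime \(\frak{p}\) of \(\mcal{R}\) with \(k_\frak{p}=2\), the specialization of \(\mcal{L}_p(f/K)\) recovers (up to explicit periods and local factors) the Bertolini--Darmon anticyclotomic \(p\)-adic \(L\)-function attached to \(f_\frak{p}\), and the one-variable anticyclotomic main conjecture in weight two is known in wide generality through the work of Bertolini--Darmon, Pollack--Weston, Chida--Hsieh and Van Order. Hida's control theorem for \(\frak{h}_\infty^{\mathrm{ord}}\), combined with a compatibility statement between \(\widetilde{H}^1_{f,\mathrm{Iw}}\) and its weight-two specializations, should then permit patching of these one-variable divisibilities into the two-variable statement, provided one controls the characteristic ideal in the weight direction via a Greenberg-style deformation argument.

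The hardest step will be the explicit reciprocity law on the level-raised indefinite side. Over \(\bb{Q}\), this is Bertolini--Darmon's second reciprocity law, established through a delicate study of mod \(p^n\) reductions of Shimura curves and their \v{C}erednik--Drinfeld uniformization at the level-raising prime. Extending this analysis to the totally real quaternionic setting demands integral models and bad-reduction results for the curves \(X_m^{(K)}\) that are not yet in place in full generality, together with an interpolation in the weight variable compatible with the construction of \(\mathbf{D}^\dagger\) from Section~\ref{Construction of big Heegner points}. A secondary difficulty is the careful accounting of local Euler factors and periods when comparing the two-variable object \(\mcal{L}_p(f/K)\) with the individual weight-two specializations of Bertolini--Darmon, which will require explicit interpolation formulas of the type conjectured in the Non-vanishing Conjecture above.
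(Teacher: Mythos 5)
This statement is a \emph{conjecture} in the paper, not a theorem: the paper offers no proof and explicitly presents it as the totally real counterpart of \cite[Conjecture 9.12]{LV10}, which is itself open. So there is nothing in the paper to compare your argument against, and any submission claiming to prove it needs to be read as a research program rather than a proof.

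Read as such, your sketch follows a reasonable and standard blueprint (Euler system bound for one divisibility, specialization plus patching for the other) and correctly identifies the central obstruction: in the definite setting one has theta elements but no Heegner classes, and Ribet-style level raising to the companion indefinite quaternion algebra is the usual device to manufacture them. But the argument has substantial gaps, several of which you flag yourself. The first and second explicit reciprocity laws you invoke --- identifying localizations of level-raised big Heegner classes with twists of $\theta_\infty$, in two variables, over a general totally real field --- do not exist; establishing them would require \v{C}erednik--Drinfeld uniformization and integral models for the towers $X_m$ over $F$ at the auxiliary prime, together with a weight-variable interpolation compatible with $\mathbf{D}^\dagger$, none of which is in place. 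The reverse divisibility relies on the one-variable anticyclotomic main conjecture over totally real fields, which is known only under hypotheses and not in the generality needed here, and the patching step presupposes a two-variable control theorem comparing $\widetilde{H}^1_{f,\mathrm{Iw}}(K_\infty,\mathbf{A}^\dagger)$ with its weight-two specializations, which would itself have to be proved. Finally, the entire comparison with complex $L$-values rests on the interpolation property of $\mcal{L}_p(f/K)$, which in this paper is only conjectured (the non-vanishing conjecture), not proven. What you have written is a plausible roadmap, not a proof, and the paper does not claim otherwise.
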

See later Section \ref{Anticyclotomic Iwasawa main conjecture in the totally definite setting} for precise definitions. This statement is the totally real counterpart of \cite[Conjecture 9.12]{LV10}.

\subsubsection{Indefinite setting} 
The geometric objects considered in this part are (indefinite) Shimura curves over totally real fields. We propose a construction of a compatible family of Heegner classes. Our approach, distinct from those of \cite{Fou12} and \cite{Dis22}, is closer in spirit to the original method of \cite{How06}, where Howard constructs his classes using a \emph{big Kummer map}.\\

Let \(H_\frak{c}^{(\frak{n}p)}\) be the maximal abelian extension of \(H_\frak{c}\) outside \(\frak{n}p\). This is our main result in this setting (see Theorem \ref{Main Theorem 2} below):
\begin{introthm}
    There exists a family of classes \(\{\kappa_\frak{c}\}_\frak{c}\) such that
    \[\kappa_\frak{c} \in H^1(\Gal(H_\frak{c}^{(\frak{n}p)}/H_\frak{c}),\bf{T}^\dag),\]
    and, for each \(v\nmid \frak{cn}p\) inert, satisfy the horizontal compatibility relations
    \begin{align*}
        U_p(\kappa_\frak{c})&= \t{cor}_{H_{\frak{c}p}/H_\frak{c}}(\kappa_{\frak{c}p}),\\
        T_v(\kappa_\frak{c})&= \t{cor}_{H_{\frak{c}v}/H_\frak{c}}(\kappa_{\frak{c}v}).
    \end{align*} 
\end{introthm}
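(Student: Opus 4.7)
The plan is to mirror Howard's strategy from \cite{How06}, applying a big Kummer map to the compatible family of projected Heegner points $\mcal{P}_\frak{c}\in\bf{D}^\dagger$ produced in Theorem A. Let $\Jac_m$ denote the Jacobian of the indefinite Shimura curve $X_m^{(K)}$. At each finite level, the usual Kummer map on rational points of $\Jac_m$ over $H_\frak{c}[m]$ extends by linearity to divisors and, after composition with Hida's ordinary idempotent $e^{\t{ord}}$, factors through $\bf{D}_m^\dagger$ to produce a map
\[
\delta_m\colon \bf{D}_m^\dagger \longrightarrow H^1\bigl(H_\frak{c}[m],\ \Ta_p \Jac_m\bigr)^{\t{ord}}.
\]
The vertical compatibility in Theorem A guarantees that the classes $\delta_m(\mcal{P}_{\frak{c},m})$ form a projective system under the transition maps induced by the $\alpha_m$.

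To package these into a single big class, I would invoke an Ohta--Hida type control theorem identifying
\[
\varprojlim_m \bigl(\Ta_p \Jac_m\bigr)^{\t{ord}} \simeq \bf{T}
\]
as $\frak{h}_\infty^{\t{ord}}$-modules endowed with their continuous $G_F$-action, and then apply the critical-character twist to obtain $\bf{T}^\dagger$. The resulting inverse limit furnishes a class $\tilde{\kappa}_\frak{c}\in H^1(H_\frak{c}[\infty],\bf{T}^\dagger)$, where $H_\frak{c}[\infty]:=\bigcup_m H_\frak{c}[m]$. Descent to $H_\frak{c}$ then proceeds via Hochschild--Serre: the Galois compatibility $P_{\frak{c},m}^\sigma=\langle\vartheta(\sigma)\rangle P_{\frak{c},m}$ from Theorem A, together with the fact that the twist defining $\bf{T}^\dagger$ is designed precisely to absorb the action of $\vartheta$, shows that $\tilde{\kappa}_\frak{c}$ is $\Gal(H_\frak{c}[\infty]/H_\frak{c})$-invariant; the pro-$p$ nature of this Galois group, combined with the torsion-free character of the coefficients, yields vanishing of the transgression obstruction and produces
\[
\kappa_\frak{c}\in H^1\bigl(\Gal(H_\frak{c}^{(\frak{n}p)}/H_\frak{c}),\ \bf{T}^\dagger\bigr).
\]
The horizontal compatibilities for $\kappa_\frak{c}$ then follow from the corresponding statements in Theorem A by functoriality of the Kummer maps under Hecke correspondences and the standard translation of field trace into Galois corestriction.

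The main obstacle will be the control theorem identifying $\varprojlim_m(\Ta_p\Jac_m)^{\t{ord}}$ with $\bf{T}$ in the Hilbert-modular setting, together with ensuring that the critical-character twist interacts correctly with the Galois descent. Over $\bb{Q}$ this combines Ohta's results on ordinary parts of Tate modules of modular Jacobians with Hida's theory; the totally real analogue demands a careful identification of the ordinary part of the tower of Jacobians attached to $U_{0,1}(\frak{n}^+,p^m)$, and one must verify that the map $\delta_m$ is genuinely Hecke-equivariant after $p$-stabilization. Additional bookkeeping is required to preserve the $U_p$-normalization so that the horizontal compatibility linking $H_{\frak{c}p}$ and $H_\frak{c}$ survives both the projection to the ordinary part and the twist.
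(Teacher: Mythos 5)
Your proposal follows the same general Howard-style big Kummer map strategy as the paper, but there is a genuine difference in how the descent from the large extension to \(H_\frak{c}\) is handled, and it is precisely here that your argument has a gap. The paper builds the critical-character twist directly into the cocycle: for a point \(P\) in \(\mathbf{J}_m^\dagger(H_m[m])\) fixed by \(\Gal(H_m[m]/H_\frak{c})\) (in the \(\Theta^{-1}\)-twisted sense), one picks \(Q_n\) with \(p^nQ_n=P\) and forms \(b_n(\sigma)=\Theta^{-1}(\sigma)Q_n^\sigma - Q_n\), obtaining a cocycle already defined on \(G_\frak{c}^{(\frak{n}p)}\). There is no Hochschild--Serre descent step at all, because the twisted Kummer map of \(\mathbf{J}_m^\dagger\) is a boundary map in cohomology over \(H_\frak{c}\) from the outset; the role of the Galois compatibility \(P_{\frak{c},m}^\sigma=\langle\vartheta(\sigma)\rangle P_{\frak{c},m}\) is to guarantee \(H_\frak{c}\)-rationality of the input to that map. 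By contrast, you propose to form a class over \(H_\frak{c}[\infty]\) with the ordinary Kummer map and then descend, asserting that the transgression obstruction vanishes because \(\Gal(H_\frak{c}[\infty]/H_\frak{c})\) is pro-\(p\) and \(\mathbf{T}^\dagger\) is torsion-free. That assertion is not a theorem: \(H^2\) of a pro-\(p\) group with torsion-free \(p\)-adic coefficients is in general nonzero (already \(H^2(\bb{Z}_p^d,\bb{Z}_p)\neq 0\) for \(d\geq 2\), and over a totally real field of degree \(d>1\) the relevant Galois group has rank \(\geq 2\)). Without a separate argument killing that \(H^2\) (or showing the class is in the image of inflation for some other reason), your descent step does not go through.

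Two smaller remarks. First, the identification \(\varprojlim_m(\Ta_p\Jac_m)^{\mathrm{ord}}\simeq\mathbf{T}\) that you flag as an Ohta--Hida control theorem is supplied in the paper by Proposition \ref{Comparacion representaciones}, whose content is the Jacquet--Langlands comparison between the big Galois representations coming from Hilbert modular varieties and from Shimura curve Jacobians; you correctly identify this as the main technical input, but the right name and mechanism is the Jacquet--Langlands transfer of Hecke eigensystems together with Chebotarev/irreducibility, not a pure control theorem on Jacobians. Second, your observation that the horizontal compatibilities for \(\kappa_\frak{c}\) follow by Hecke-equivariance of the Kummer maps and translation of trace to corestriction matches the paper's (brief) justification of Theorem \ref{Main Theorem 2} and is fine as stated.
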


The specialization of these classes should be compared with generalized Heegner classes. See \cite{Cas19} and \cite{Ota20} for the classes of Howard, and \cite{LMW25} for the classes of Longo and Vigni.

\subsection{General notation}
Fix an embedding \(\overline{\bb{Q}}\hookrightarrow \overline{\bb{Q}}_p\). For a field \(K\), denote \(\bb{A}_K\) its ring of adeles and \(\hat{K}\) its finite part. Let \(F/\bb{Q}\) be a totally real field and fix an embedding \(\tau: F\hookrightarrow \bb{R}\). Consider a ring \(A\) such that \(\tau\) embeds \(\mcal{O}_F\) into it. The letter \(v\) will always represented a finite prime in \(F\). Denote \(A_v := A\otimes_{\mcal{O}_F,\tau} \mcal{O}_{F,v}\). On the other hand, for \(p\) prime in  \(\bb{Q}\), denote
\[ A_p:= A \otimes_\bb{Z} \bb{Z}_p = \prod_{\frak{p}|p} A \otimes_{\mcal{O}_F,\tau} \mcal{O}_{F,\frak{p}}.\]
All the Galois cohomology groups considered below are continuous cohomology groups.

\subsection{Acknowledgments} 
This paper forms part of my PhD thesis at the Università di Genova. I thank my supervisor, Stefano Vigni, for suggesting this project and for his constant support. I am also thankful to Enrico Da Ronche, Luca Mastella, Beatrice Ostorero Vinci and Francesco Zerman for many helpful discussions.

\section{Review of Hilbert modular forms and Hida theory} \label{Review of Hilbert modular forms and Hida theory}

In this section, we recall the definition of Hilbert modular forms that will be used throughout the paper and summarize their main properties. We then review the basic theory of Hida families of Hilbert modular forms, before turning to Nekovar’s extended Selmer groups in the context of a Hida family. Our main references in the totally real setting are \cite{Nek06}, \cite{Fou12}, and \cite{Hid94}. We follow the structure of \cite{ACR25}. For closely related treatments of Hida theory in the rational case, see \cite{LV10} and \cite{How06}.

\subsection{Notation}
Let \(F\) be a totally real field of dimension \(d\). Consider \(I_F\) to be the set of embeddings of \(F\) in \(\bb{R}\). Let \(\mcal{H}\) be the upper-half complex plane and let \(\mcal{H}^{I_F}\) denote the direct sum of \(d\) copies of \(\mcal{H}\). Write \(\textbf{i}:=(i,\dots,i)\in\mcal{H}^{I_F}\), where \(i=\sqrt{-1}\).\\

Consider \(G:=\t{Res}_{F/\bb{Q}} (\t{GL}_{2,F})\). One can identify
\[G(\bb{R}) \simeq  \prod_{\sigma\in I_F} \t{GL}_2(\bb{R}).\]
Denote by \(G^+\) the elements of \(G\) with totally positive determinant. The real points \(G^+(\bb{R})\) act on \(\mcal{H}^{I_F}\) via fractional linear transformation. This means, for
\[\gamma = \begin{pmatrix} 
    a_\sigma & b_\sigma \\ 
    c_\sigma & d_\sigma \\ 
\end{pmatrix}_{\sigma\in I_F}  \hspace{5mm} \t{and} \hspace{5mm} \tau= (\tau_\sigma)_{\sigma\in I_F}\in \mcal{H}^{I_F},
\]
that the action of \(\gamma\) on \(\tau\) is given by
\[ \gamma \cdot \tau := \LL(\frac{a_\sigma \tau_\sigma + b_\sigma}{c_\sigma \tau_\sigma + d_\sigma}\RR)_{\sigma_\in I_F} \in \mcal{H}^{I_F}.\]

\subsection{Definition of Hilbert cuspform} 
Choose the level \(\frak{n}\) to be an integral ideal in \(\mcal{O}_F\). Consider, in \(G(\hat{\bb{Q}})\), the subgroups
\begin{align*}
    U_0(\frak{n}) &= \LL\{ \begin{pmatrix} 
        a & b \\ 
        c & d \\ 
    \end{pmatrix} \in G(\hat{\bb{Z}})  \LL| c \equiv 0 \mod{\frak{n}\hat{\mcal{O}}_F} \RR. \RR\},\\
    U_1(\frak{n}) &= \LL\{ \begin{pmatrix} 
        a & b \\ 
        c & d \\ 
    \end{pmatrix} \in G(\hat{\bb{Z}})  \LL| c,d-1\equiv 0 \mod{\frak{n}\hat{\mcal{O}}_F} \RR. \RR\}.
\end{align*}
Complete the groups by adding an archimedean part 
\(K_\infty\), chosen to be the stabilizer of \(\bf{i}\) in \(G(\bb{R})^+\).\\

In the next definition, let \(U(\frak{n})\) be either \(U_0(\frak{n})\) or \(U_1(\frak{n})\). Let \(k\) be a positive even integer.

\begin{definition}[Hilbert cuspform]
An \textbf{(adelic) Hilbert cuspform} of level \(U(\frak{n})\) and parallel weight \(k\) is a function \(f:G(\bb{A}_F)\rightarrow \bb{C}\) satisfying

\begin{itemize}
    \item For all \(\alpha\in G(\bb{Q})\) and \(v=u\cdot u_\infty\in V(\frak{n})\), 
    \[f(\alpha x v) = f(x) j_k(u_\infty,\bf{i})^{-1},\] 
    where \(j_k\) is the automorphy factor defined by
    \[j_k(\gamma,\tau) := \prod_{\sigma \in I_F} \frac{(c_\sigma \tau_\sigma + d_\sigma)^k}{\det(\gamma_\sigma)},\]
    for \(\gamma= \begin{pmatrix} 
        a & b \\ 
        c & d \\ 
    \end{pmatrix} \in G(\bb{R})\) and \(\tau = (\tau_\sigma)_{\sigma\in I_F}\in \mcal{H}^{I_F}\).\\

    \item For every \(x\in G(\hat{\bb{Q}})\) take \(u_\infty \in G(\bb{R})^+\) such that \(u_\infty \bf{i} = z\). We ask the function \(f_x: \mcal{H}^{I_F} \rightarrow \bb{C}\) defined by \(f_x(z):=f(xu_\infty)j_k(u_\infty,\bf{i})\) to be holomorphic.\\
    
    \item For all adelic points \(x\in G(\bb{A}_F)\) and for all additive measures on \(F\backslash \bb{A}_F\) we have
    \[\int_{F\backslash\bb{A}_F} f\LL(\begin{pmatrix} 
        1 & a \\ 
        0 & 1 \\ 
    \end{pmatrix}x\RR)da =0.\]

    \item When \(F=\bb{Q}\), for every \(x\in G(\hat{\bb{Q}})\) the function \(|\t{Im}(z)^{k/2}f_x(z)|\) is uniformly bounded on \(\mcal{H}\).
\end{itemize} 
\end{definition}

Denote by \(S_k(\frak{n};\bb{C})\) the \(\bb{C}\)-vector space of Hilbert cuspforms of level \(U_1(\frak{n})\) and parallel weight \(k\). One can define the \(q\)-expansion of a Hilbert modular form (see for example \cite[Section 2.3.3]{Hid06}). For a subring \(R\subset \bb{C}\) let \(S_k(\frak{n};R)\) denote the \(R\)-module of cuspforms with coefficients in \(R\). 

\subsection{Hecke operators}
We give the definition of the Hecke operators we are interested in using double coset operators.\\

For any \(g\in G\), there is a decomposition as a finite disjoint union of the double coset 
\[U_1(\frak{n})gU_1(\frak{n}) = \bigsqcup_i \gamma_i U_1(\frak{n}).\]
This lets us define the double coset operator
\[([U_1(\frak{n})gU_1(\frak{n})]f)(x) := \sum_i f(x\gamma_i).\]

Like in the classical theory, we will be interested in some particular instances of this construction. These are the diamond operators \(\<{z}\), for \(z\in Z(\hat{\bb{Q}})\), and the operators \(T_\frak{q}\), for all finite primes \(v\nmid \frak{n}\). We refer to this set as the Hecke operators. They provide as with the following well-known definition.
\begin{definition}
    A cuspform \(g\in S_k(\frak{n};\bb{C})\) is called \bf{eigenform} if it is an eigenvector for the Hecke operators.
\end{definition}

We proceed to give a precise definition of the Hecke operators.

\subsubsection{Diamond operators}
For \(g\in \bb{A}_F^\times\), define
\[(\<{g}f)(x):=\LL(\LL[U_1(\frak{n}) \begin{pmatrix} 
        g & 0 \\ 
        0 & g \\ 
\end{pmatrix} U_1(\frak{n})\RR]f\RR)(x)=f(xg).\]

The action of the diamond operators factor through the \textit{narrow class group} modulo \(\frak{n}\)
\[\t{Cl}_F^+(\frak{n}):=F^\times \backslash \bb{A}_F^\times / (F\otimes_\bb{Q} \bb{R})^\times_+ (1+\hat{\frak{n}}).\]
This induces a decomposition of \(S_k(\frak{n};\bb{C})\) in terms of characters
\[\chi:  \t{Cl}_F^+(\frak{n}) \rightarrow \bb{C}^\times,\]
satisfying the parity condition
\[\chi_v(-1) = (-1)^k\]
for all archimedean primes \(v\). We denote by \(S_k(\frak{n},\chi;\bb{C})\) the submodule of \(S_k(\frak{n};\bb{C})\) where \(\<{g}\) acts by multiplication times \(\chi(g)\) for all \(g\in \bb{A}_F^\times\). The character \(\chi\) is called the \textit{nebentype} of \(f\).

\subsubsection{Operators \(T_v\)}
For a prime ideal \(v\subset\mcal{O}_F\), fix a choice of uniformizer \(\varpi_v\). When no confusion can arise, we also use the same symbol \(\varpi_v\) to denote the idele whose \(v\)-component is \(\varpi_v\) and whose components at all other places are equal to 1. Define
\[T(\varpi_v) := \LL[U_1(\frak{n}) \begin{pmatrix} 
    \varpi_v & 0 \\ 
    0 & 1 \\ 
\end{pmatrix} U_1(\frak{n})\RR].\]
The operator will not depend on the choice of the uniformizer (see \cite[Section 2.2]{ACR25}). For this reason, denote \(T(\varpi_v)\) by \(T_v\).

\subsection{Galois representation attached to a Hilbert modular form}

Let \(f\in S_k(\frak{n},\chi;\bb{C})\) be an eigenform. Thanks to the work of Eichler--Shimura, Caroyol and Taylor among others, for a sufficiently large finite field extension \(L/\bb{Q}_p\), there is a \(p\)-adic Galois representation \(\rho_f\) attached to \(f\)
\[\rho_f: G_F \longrightarrow \t{GL}_2(L),\]
characterized by
 \[\t{det}(1-\rho_f(\t{Fr}_v)X) = 1- a_v(f) X + \chi(v) |v|^{k-1} X^2,\]
for all prime ideals such that\(v\nmid p\frak{n}\). Here, \(\t{Fr}\) denotes the \textit{geometric} Frobenius and \(a_v(f)\) the eigenvalue of \(T_v\) acting on \(f\). 

The dual representation \(V(f)^\vee\) satisfies the relation
\[ V(f)^\vee \simeq V(f)(1-k) \otimes \chi^{-1}.\]
For more details on this Galois representation, see \cite[Section 12.4]{Nek06}.

\subsection{Hida theory for Hilbert modular forms} \label{Hida theory}
In the context of Hida theory, the notion of \textit{ordinary} cuspform is fundamental.

For the fixed rational prime \(p\), denote by \(U_p\) the operator
\[U_p := \prod_{\frak{p}|p} T_\frak{p}.\]
\begin{definition}
    We say that that an eigenform \(f\in S_k(\frak{n};\bb{C})\) is \bf{\(p\)-ordinary} if the eigenvalue \(\lambda_p\) of \(U_p\) at \(f\) is a \(p\)-adic unit. Equivalently, if for every \(v|p\), the eigenvalues \(\lambda_v\) of \(T_v\) at \(f\) are \(p\)-adic units.
\end{definition}

Consider the open compact subgroups 
\[U_{0,1}(\frak{n},p^m):= U_0(\frak{n})\cap U_1(p^m).\]
Let \(\mcal{O}\) be the ring of integers of \(L/\bb{Q}_p\), the fixed field of coefficients of the Galois representation attached to \(f\). Consider the cuspforms \(S_k(U_{0,1}(\frak{n},p^m);\mcal{O})\) and the corresponding Hecke algebra \(\frak{h}_k(U_{0,1}(\frak{n},p^m);\mcal{O})\), that is, the \(\mcal{O}\)-algebra generated by the Hecke operators acting on \(S_k(U_{0,1}(\frak{n},p^m);\mcal{O})\). Consider the inverse limit
\[\frak{h}(\frak{n}p^\infty;\mcal{O}) := \varprojlim_m \frak{h}_k(U_{0,1}(\frak{n},p^m);\mcal{O}).\]
Hida (\cite{Hid94}) proved that this algebra does not depend on the weight \(k\). For this reason, it is not explicitly stated in the notation. The compact ring \(\frak{h}(\frak{n}p^\infty;\mcal{O})\) has a direct summand 
\[\frak{h}^\t{ord}(\frak{n}p^\infty;\mcal{O})\] 
on which the operator \(U_p\) is invertible. It is obtained using \textit{Hida's big ordinary projector} \(e= \lim_{r\to \infty} U_p^{r!}\) (see \cite[Section 12.7]{Nek06}). When there is no confusion, denote the algebra \(\frak{h}(\frak{n}p^\infty;\mcal{O})\) as \(\frak{h}\), and similarly the algebra \(\frak{h}^\t{ord}(\frak{n}p^\infty;\mcal{O})\) as \(\frak{h}^\t{ord}\).\\

Denote \(\t{Cl}_F^+(\frak{n}p^\infty):=\varprojlim_m \t{Cl}_F^+(\frak{n}p^m)\). The module \(\frak{h}\) has a natural action of the Iwasawa algebra
\[\underline{\Lambda}:=\varprojlim_m\mcal{O}[\t{Cl}_F^+(\frak{n}p^m)] = \mcal{O}[[\t{Cl}_F^+(\frak{n}p^\infty)]],\]
which arises from the action of the diamond operators. In particular, the action of its group elements on \(\frak{h}_2^\t{ord}\) is given by \(\sigma\mapsto \<{\epsilon(\sigma)}\), for a choice of a natural continuous homomorphism
\[ \epsilon: \t{Cl}_F^+(\frak{n}p^\infty)\longrightarrow \mcal{O}^\times_{F,p}.\]
Similarly, denoting by \(\Gamma\) the free part of \(\t{Cl}_F^+(\frak{n}p^\infty)\), define the corresponding subalgebra \(\Lambda\subset \underline{\Lambda}\) by
\[\Lambda:=\mcal{O}[[\Gamma]].\]

It is a result of Hida that \(\frak{h}^\t{ord}\) is finite and torsion-free as \(\Lambda\)-algebra (see \cite[Section 3]{Hid94}).
 
\begin{definition}
    A \textbf{Hida family} is a \(\Lambda\)-algebra homomorphism 
    \[\mathscr{G}: \frak{h}^\t{ord} \longrightarrow \bb{I},\]
    for \(\bb{I}\) a choice of a \textit{branch}, that is, an integral domain, finite and flat over \(\Lambda\).
\end{definition}

The kernel is a minimal prime which determines a local noetherian domain \(\mcal{R}\), 
\[ \mcal{R}:=\frak{h}^\t{ord}/\t{ker}(\mathscr{G}),\]
which is finite and flat over \(\Lambda\). This ring will be crucial in later constructions.\\

Consider the \(\mcal{O}\)-algebra homomorphism 
\begin{align*}
    P_{k,\chi}: \Lambda &\longrightarrow \mcal{O}\\
    [z] &\longmapsto \chi(z) N_{F_p/\bb{Q}_p}(z)^{k-2},
\end{align*}
where \([\cdot]\) denotes group elements, \(k\) is a non-negative integer and \(\chi: \Gamma \rightarrow \mcal{O}^\times\) a finite character.

\begin{definition}
    An \textbf{arithmetic map} is any element of \(\Hom_{\mcal{O}\t{-alg}}(\bb{I},\overline{\bb{Q}}_p)\) that, restricted to \(\Lambda\), coincides with \(P_{k,\chi}\) for some parallel weight \(k\) and some character \(\chi\). 
\end{definition}

The choice of a Hida family \(\mathscr{G}\) and an arithmetic map \(P_{k,\chi}\) determines an ordinary cuspform. In fact, denote by \(\mcal{P}\) the kernel in \(\frak{h}^\t{ord}\) of the composition \(P_{k,\chi} \circ \mathscr{G}\). Then the induced map
\[ \frak{h}^\t{ord}/\mcal{P} \xrightarrow{\ P_{k,\chi} \circ \mathscr{G}\ } \overline{\bb{Q}}_p  \]
realizes the system of Hecke eigenvalues of a canonical ordinary normalized \(p\)-stabilized eigenform. Conversely, given an ordinary cuspidal Hilbert eigenform \(f\) of parallel weight, 
consider the map
\[
  \lambda_f : \frak{h}^\t{ord} \longrightarrow \overline{\bb{Q}}_p
\]
sending each Hecke operator to its eigenvalue on \(f\). This map factors through the 
local component 
\(
  \frak{h}^\t{ord}_\mathfrak{m}
\)
of \(\mathfrak{h}^\t{ord}\), where \(\mathfrak{m}\) is the maximal ideal determined by \(f\). 
This local component
\[
  \frak{h}^\t{ord} \longrightarrow \frak{h}^\t{ord}_\frak{m}
\]
is the Hida family containing \(f\). Moreover, the induced specialization map 
\(\lambda_f\) on \(\frak{h}^\t{ord}_\frak{m}\) corresponds to an 
arithmetic map, which represents the specialization of the family 
at the parallel weight and nebentype of \(f\).\\

For explicit expressions and further details, check \cite{Nek06}, \cite{Hid06} or \cite{FJ24}.

\subsection{Big Galois representation}
To a Hida family \(\mathscr{G}\) one can attach a unique (up to equivalence) big Galois representation:
\[\rho_\mathscr{G} : G_F \rightarrow \t{GL}_2(\t{Frac}(\mcal{R})),\]
characterized by 
\[\det(1-\rho_\mathscr{G}(\t{Fr}_v) X) = 1 - \mathscr{G}(T_v) X + \mathscr{G}(\<{\varpi_v})|v|X^2,\]
for each prime \(v\nmid p\frak{n}\). See the full statement in \cite[Section 12.7]{Nek06}.\\

We denote by \(\bf{T}\) the associated rank 2 \(\mcal{R}\)-module with a continuous \(G_F\)-action. The key property of the big Galois representation is that, via specializations, one recovers the Galois representation attached to a Hilbert modular form in Section 2.4. This means that, applying an arithmetic map \(\nu_\frak{p}\) to the coefficients of \(\bf{T}\), there is an isomorphism
\[\bf{T}_\frak{p} := \bf{T}\otimes_{\mcal{R},\nu_\frak{p}}\overline{\bb{Q}}_p \simeq V(f_\frak{p}),\]
where \(V(f_\frak{p})\) is the Galois representation attached to the ordinary cuspform \(f_\frak{p}\) corresponding to the \(\frak{p}\)-specialization of the family.

\subsection{Critical character and twists}
The property
\[V(f)^\vee \simeq V(f)(1-k)\otimes \chi^{-1}\]
of the Galois representation \(V(f)\) attached to a parallel weight \(k\) Hilbert modular form \(f\) of character \(\chi\) does not successfully extend to big Galois representations, due to the explicit dependence of \(k\). For this reason, we consider a twisted big Galois representation \(\bf{T}^\dagger\) defined by
\[\bf{T}^\dagger := \bf{T}\otimes \Theta^{-1},\]
for certain character \(\Theta\) in such a way that, we get a cleaner relation
\[(\bf{T}^\dagger)^\vee \simeq \bf{T}^\dagger (-1),\]
normalized for (parallel) weight 2 Hilbert modular forms. This character \(\Theta\) is called \emph{critical twist}. See \cite{How06} or \cite{How07} for more details over \(\bb{Q}\) and an explicit construction. Here we study it for Hida families of Hilbert modular forms.\\

As in the previous subsection, consider the (branch of the) Hida family attached to \(f\) and the corresponding domain \(\mcal{R}\). Consider the homomorphism
\[\theta: \Gal(F[\infty]/F)=\t{Cl}_F^+(p^\infty) \longrightarrow \mcal{R}^\times,\]
determined by 
\[\theta^2(\sigma)=[\epsilon(\sigma)],\]
for each \(\sigma\in\Gal(F[\infty]/F)\). This induces a 1-dimensional big Galois representation
\begin{align*}
    \Theta: G_F &\longrightarrow \text{Gal}(F[\infty]/F) \overset{\theta}{\longrightarrow} \mcal{R}^\times.
\end{align*}
For this choice, \(\bf{T}^\dagger\) satisfies the desired property.

\subsection{Nekovar's extended Selmer group}
Using Selmer complexes, Nekovar defined generalized Selmer groups attached to Hida families. The construction we present in this article is a helpful tool for studying the arithmetic of these objects. In this subsection, we review the basic properties of these groups. We follow the exposition of \cite[Section 5.6]{LV10}. For precise definitions, see \cite{Nek06} or \cite[Section 5.1]{Fou12}.\\

Let \(M\in\{\bf{T},\ \bf{T}^\dag,\ V_\frak{p}^\dag\}\), all studied in this section, and let \(L/F\) be a number field. 
Write
\[\tilde{H}^1_f(L,M)\]
for the extended Selmer group in the sense of Nekovar.\\

Since \(M\) is \(p\)-ordinary, for each place \(v|p\), there is a short exact sequence  of \(G_{L_v}\)-modules
\[0\longrightarrow F_v^+(M) \longrightarrow M \longrightarrow F_v^{-}(M)\longrightarrow 0.\]

Recall Greenberg’s local conditions
\[H^1_\t{Gr} (L_v,M) := \begin{cases}
    \t{ker}\LL(H^1(L_v,M) \longrightarrow H^1(L_v^\t{unr},M)\RR) &\t{if } v\nmid p,\\
    \t{ker}\LL(H^1(L_v,M) \longrightarrow H^1(L_v,F_v^-(M))\RR) &\t{if } v|p,
\end{cases}\]
and the Greenberg Selmer group
\[ \t{Sel}_\t{Gr}(L,M) := \t{ker}\LL( H^1(L,M)  \longrightarrow \prod_v H^1(L_v,M)/H^1_\t{Gr} (L_v,M)\RR).\]

\begin{lemma}
    The group \(\tilde{H}_f^1(L,M)\) sits in the following short exact sequence
    \[ 0\longrightarrow \bigoplus_{v|p} H^0(L_v,F_v^-(M)) \longrightarrow \tilde{H}_f^1(L,M) \longrightarrow \t{Sel}_\t{Gr}(L,M) \longrightarrow 0.\]    
\end{lemma}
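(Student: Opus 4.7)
My plan is to derive the sequence directly from the long exact sequence of cohomology attached to Nekov\'a\v{r}'s Selmer complex. Recall from \cite[Section 5.1]{Fou12} that \(\tilde{H}^1_f(L,M)\) is the first cohomology of the shifted mapping cone
\[
\tilde{C}^\bullet_f(L,M) = \mathrm{Cone}\bigl( C^\bullet(G_{L,S}, M) \oplus \bigoplus_v U_v^\bullet \longrightarrow \bigoplus_v C^\bullet(G_{L_v}, M) \bigr)[-1],
\]
where the local conditions \(U_v^\bullet\) equal \(C^\bullet(L_v^\mathrm{unr}/L_v, M^{I_v})\) for \(v\nmid p\) and \(C^\bullet(L_v, F_v^+(M))\) for \(v\mid p\); by construction the induced map \(i_v\colon H^1(U_v^\bullet) \to H^1(L_v, M)\) has image precisely \(H^1_\mathrm{Gr}(L_v, M)\).

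Extracting the piece of the associated long exact sequence around degree one, I obtain
\[
\bigoplus_v H^0(L_v, M) \xrightarrow{\partial} \tilde{H}^1_f(L, M) \xrightarrow{\alpha} H^1(L, M) \oplus \bigoplus_v H^1(U_v^\bullet) \xrightarrow{\beta} \bigoplus_v H^1(L_v, M),
\]
where \(\beta(c, (c_v)) = (\mathrm{loc}_v(c) - i_v(c_v))_v\). Composing \(\alpha\) with projection onto the first summand yields a map \(\pi \colon \tilde{H}^1_f(L,M) \to H^1(L,M)\). The image of \(\pi\) lies in \(\mathrm{Sel}_\mathrm{Gr}(L,M)\) by the description of \(\mathrm{im}(i_v)\). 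Conversely, given \(c \in \mathrm{Sel}_\mathrm{Gr}\), choose at each place a lift \(c_v \in H^1(U_v^\bullet)\) of \(\mathrm{loc}_v(c)\); then \((c, (c_v)) \in \ker(\beta) = \mathrm{im}(\alpha)\), which gives surjectivity of \(\pi\) onto \(\mathrm{Sel}_\mathrm{Gr}(L,M)\).

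For the kernel, an element of \(\ker(\pi)\) is represented, modulo \(\mathrm{im}(\partial)\), by a tuple \((0, (c_v))\) with \(c_v \in \ker(i_v)\). For \(v \nmid p\) inflation--restriction forces \(\ker(i_v) = 0\). For \(v \mid p\), the long exact sequence of \(0 \to F_v^+(M) \to M \to F_v^-(M) \to 0\) identifies \(\ker(i_v)\) with \(\mathrm{coker}\bigl( H^0(L_v, M) \to H^0(L_v, F_v^-(M)) \bigr)\); a diagram chase combining this cokernel with the effect of \(\partial\) on the \(H^0(L_v, M)\) summands shows that the two unwanted copies of \(H^0(L_v, M)\) cancel, leaving precisely \(\bigoplus_{v \mid p} H^0(L_v, F_v^-(M))\).

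The main obstacle is this last bookkeeping step: ensuring that the various \(H^0\)-contributions (the one arising from \(\partial\), the cokernel inside \(\ker(i_v)\), and the comparison with the global \(H^0\)) assemble into the clean right-hand side rather than a subquotient. This is exactly the computation carried out once and for all by Nekov\'a\v{r}'s formalism (see \cite[Chapter 6, Chapter 9]{Nek06}) and transposed to Hida families in \cite[Section 5]{Fou12}; in practice I would simply invoke these references to finish the argument.
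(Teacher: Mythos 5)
The paper's ``proof'' is a one-line citation of Nekov\'a\v{r}'s book (Section 0.10), so your attempt to actually derive the sequence from the mapping-cone definition goes considerably further than the paper does. Your overall strategy is correct, and the first part of your argument — identifying the image of $\pi$ with $\t{Sel}_\t{Gr}(L,M)$ using $\t{im}(i_v) = H^1_\t{Gr}(L_v,M)$ — is exactly right.

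The bookkeeping at the end, which you flag as the obstacle, can be sidestepped. Rather than working with the local conditions $U_v^+$, rewrite the Selmer complex (by the standard rotation of the octahedron) as
\[
\tilde{C}^\bullet_f(L,M)\;\simeq\;\t{Cone}\Bigl(C^\bullet(G_{L,S},M)\longrightarrow\bigoplus_v U_v^-\Bigr)[-1],
\]
where $U_v^-$ is the quotient $C^\bullet(G_{L_v},M)/U_v^+$, i.e.\ $C^\bullet(L_v,F_v^-(M))$ for $v\mid p$ and the ``ramified quotient'' for $v\nmid p$. The long exact sequence of this cone reads, in degrees $0$ and $1$,
\[
\tilde{H}^0_f(L,M)\to H^0(L,M)\to \bigoplus_v H^0(U_v^-)\to \tilde{H}^1_f(L,M)\to H^1(L,M)\to \bigoplus_v H^1(U_v^-).
\]
For $v\nmid p$ one computes $H^0(U_v^-)=0$ directly (the map $H^0(L_v^{\t{unr}}/L_v,M^{I_v})\to H^0(L_v,M)$ is an isomorphism), while for $v\mid p$ one has $H^0(U_v^-)=H^0(L_v,F_v^-(M))$. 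Likewise $\ker\bigl(H^1(L,M)\to\bigoplus_v H^1(U_v^-)\bigr)$ is exactly $\t{Sel}_\t{Gr}(L,M)$. This gives the statement in one pass, with no cancellation argument to chase.

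One genuine gap, shared by your write-up and by the paper's terse citation: the injectivity of $\bigoplus_{v\mid p}H^0(L_v,F_v^-(M))\to\tilde{H}^1_f(L,M)$ is equivalent to the vanishing of the preceding map $H^0(L,M)\to\bigoplus_{v\mid p}H^0(L_v,F_v^-(M))$ in the exact sequence above. This holds whenever $H^0(L,M)=0$, which is automatic for the representations $M\in\{\mathbf{T},\mathbf{T}^\dagger,V_\mathfrak{p}^\dagger\}$ under the standard residual-irreducibility hypothesis implicit in the Hida-theoretic setup, but it is a hypothesis and not a formal consequence of the mapping-cone formalism. You should state it; neither your ``two unwanted copies cancel'' heuristic nor the paper's bare citation makes this explicit.
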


\begin{proof}
    This can be found in \cite{Nek06}. See, for example, its Section 0.10.
\end{proof}

In particular, if every place \(v\mid p\) is non-exceptional (in the sense that \(H^0(L_v,F_v^-(M))=0\), 
see \cite[Section 5.1.3]{Fou12}), then the correction term in the lemma vanishes and one has
\begin{corollary}
There is an isomorphism
    \[
\tilde{H}^1_f(L,M)\simeq\t{Sel}_\t{Gr}(L,M).
    \]
and, more precisely, for the representation \(V_\frak{p}^\dagger\), one has
\[
\tilde{H}^1_f(L,V_\frak{p}^\dagger)\simeq\t{Sel}_\t{Gr}(L,V_\frak{p}^\dagger)\simeq{H}^1_f(L,V_\frak{p}^\dagger).
\]
where \({H}^1_f(L,V_\frak{p})\) denotes Bloch--Kato's Selmer group.
\end{corollary}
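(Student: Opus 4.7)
My plan is to derive the first isomorphism as an immediate corollary of the preceding lemma, and then to argue the comparison with Bloch--Kato's Selmer group place-by-place for the specialization $V_\frak{p}^\dag$.

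First, I would apply the lemma to a general $M$: the non-exceptional hypothesis $H^0(L_v,F_v^-(M))=0$ for every $v\mid p$ kills the left-hand term of the short exact sequence of the lemma, so the surjection $\tilde{H}_f^1(L,M)\twoheadrightarrow \t{Sel}_\t{Gr}(L,M)$ becomes an isomorphism. Taking $M=V_\frak{p}^\dag$ gives the first half of the second displayed assertion, provided one verifies non-exceptionality at this specialization. I would check this from the explicit Hodge--Tate weights of $F_v^\pm(V_\frak{p}^\dag)$: these are governed by the parallel weight of $f_\frak{p}$ and the critical character $\Theta$, whose normalization ensures that none of them equals $0$, so the $H^0$ terms vanish.

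Next, I would identify $\t{Sel}_\t{Gr}(L,V_\frak{p}^\dag)$ with $H^1_f(L,V_\frak{p}^\dag)$ one place at a time. At primes $v\nmid p$, the representation $V_\frak{p}^\dag$ is pure (being a twist of the Galois representation attached to a Hilbert cuspform), so the unramified local condition coincides with Bloch--Kato's. At primes $v\mid p$, I would exploit ordinarity: $F_v^+(V_\frak{p}^\dag)$ has strictly positive Hodge--Tate weights, so $H^1(L_v,F_v^+(V_\frak{p}^\dag))\subseteq H^1_f(L_v,F_v^+(V_\frak{p}^\dag))$; dually, $F_v^-(V_\frak{p}^\dag)$ has non-positive Hodge--Tate weights, forcing $H^1_f(L_v,F_v^-(V_\frak{p}^\dag))=0$. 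Chasing the long exact sequence in cohomology attached to $0\to F_v^+(V_\frak{p}^\dag)\to V_\frak{p}^\dag\to F_v^-(V_\frak{p}^\dag)\to 0$ then yields the local identification $H^1_{\t{Gr}}(L_v,V_\frak{p}^\dag)=H^1_f(L_v,V_\frak{p}^\dag)$, and hence the global equality of Selmer groups.

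The hard part, I anticipate, will be precisely this local computation at $v\mid p$: it requires a careful determination of the Hodge--Tate weights of $F_v^\pm(V_\frak{p}^\dag)$ after the critical twist, together with the crystalline characterization of Bloch--Kato's local condition. These arguments are classical over $\bb{Q}$ for ordinary modular forms (see \cite{How06}, \cite{Nek06}), but in the totally real setting one must track the contributions at all primes above $p$ following the formalism of \cite{Fou12}.
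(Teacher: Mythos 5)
Your derivation of the first isomorphism is exactly the paper's: apply the preceding lemma and use the non-exceptionality hypothesis $H^0(L_v,F_v^-(M))=0$ to kill the correction term. The paper states this corollary precisely under that hypothesis (see the sentence preceding it), so there is nothing further to prove there; the second chain of isomorphisms for $V_\frak{p}^\dagger$ is standard and not argued in the paper. To that extent your approach matches.

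However, your proposed ``verification'' of non-exceptionality and your local comparison at $v\mid p$ both rest on a mistaken Hodge--Tate weight claim. The graded piece $F_v^-(V_\frak{p}^\dagger)$ is (up to an unramified twist) the quotient on which inertia acts through a finite character; its Hodge--Tate weight is $0$, not strictly negative, and in particular non-vanishing of the Hodge--Tate weight cannot be used to conclude $H^0(L_v,F_v^-(V_\frak{p}^\dagger))=0$. Non-exceptionality is a genuine hypothesis on the Frobenius eigenvalue of $F_v^-$ (the ``no exceptional/trivial zero'' condition), and it can fail; you should treat it as an assumption rather than try to derive it from weights. The same issue affects your claim that $H^1_f(L_v,F_v^-(V_\frak{p}^\dagger))=0$: since $F_v^-$ is crystalline with $\mathrm{Fil}^0=D_{\mathrm{dR}}$, its Bloch--Kato $H^1_f$ is the unramified subspace and its vanishing again requires the eigenvalue condition, not a weight argument. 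The comparison $H^1_{\mathrm{Gr}}(L_v,V_\frak{p}^\dagger)=H^1_f(L_v,V_\frak{p}^\dagger)$ does hold under non-exceptionality (this is the standard computation, cf.\ \cite{Nek06}, \cite{Fou12}), but the mechanism is a dimension count combining the ordinary filtration with the Frobenius-eigenvalue hypothesis, not purely the sign of the Hodge--Tate weights.

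Outside $p$ your appeal to purity is fine, and the overall structure (lemma for the first isomorphism, place-by-place comparison for the second) is the right one; the fix needed is to replace the Hodge--Tate-weight reasoning at $v\mid p$ with the correct argument under the non-exceptionality assumption.
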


\section{Heegner points via optimal embeddings}
In this section, we give the definition of Heegner point that we will be using. Our approach consistently relies on the theory of optimal embeddings, which is well-suited to both the definite (Gross points) and indefinite (Heegner points) cases.\\

As in the last section, \(F\) is a totally real field and \(p\) a fixed rational unramified prime. Fix also an archimedean prime \(\tau: F\hookrightarrow \bb{R}\). Let \(\frak{n}\) be an integral ideal in \(\mcal{O}_F\) and \(K/F\) be a CM-field of relative discriminant coprime with \(\frak{n}p\). Factorize \(\frak{n}=\frak{n}^+\frak{n}^-\) so that the primes dividing \(\frak{n}^+\) (resp. \(\frak{n}^-\)) are split (resp. inert) in \(K\). 

\begin{assumption}
    Assume \(\frak{n}^-\) to be square-free.
\end{assumption}

 Consider \(B\) the unique quaternion algebra of discriminant \(\frak{n}^-\) which is ramified at all the archimedean primes different from \(\tau\). If additionally \(B\) is ramified at \(\tau\), we say that we are in the \textbf{(totally) definite setting}. Otherwise, we are in the \textbf{indefinite setting}.

\subsection{Double cosets of points} We define sets \(X_m^{(K)}\), for \(m\geq 1\), which later will represent points of the Gross or Shimura curves, depending on the setting in which we are working.\\

Choose a set of isomorphisms \(\{ i_v: B_v:=B\otimes_F F_v \overset{\sim}{\longrightarrow} M_2(F_v)\}_{v\nmid \frak{n}^-}\). With a slight abuse of notation, we identify the elements of \(B_v\) with their respective matrices. Consider the standard family of Eichler orders \(\{R_m\}_{m\geq0}\) with respect to the choice \(\{i_v\}_{v\nmid \frak{n}^-}\) in a way that each Eichler order \(R_m\) is of level \(\frak{n}^+p^m\). Note that \(R_{m+1} \subset R_m\) for all \(m\geq 1\).\\

We are interested in two families of open-compacts encoding the level. Denote, for each \(m\geq 1\),
\[U_0(\frak{n}^+p^m) := \hat{R}_m^\times.\]
On the other hand, consider the subgroup of \(U_{0,1}(\frak{n}^+,p^m) \subset U_0(\frak{n}^+p^m)\) defined by
\[U_{0,1}(\frak{n}^+,p^m) := \LL\{x=(x_v)_v \in U_0(\frak{n}^+p^m) : x_p = 
\begin{pmatrix} 
    * & *\\ 
    0 & 1\\ 
\end{pmatrix}  \mod{p^m} 
\RR\}.\]

\begin{remark}
    These are the adelic version of the classical congruence subgroups \(\Gamma_0(N^+p^m)\) and \(\Gamma_{0,1}(N^+,p^m)\), extended here to the totally real quaternionic setting.
\end{remark}

Under the previous choices, we define, for every \(m\geq 1\) the double cosets
\begin{align*}
    X_m^{(K)} &:= B^\times \backslash (\Hom_{F,\tau}(K,B)\times \hat{B}^\times) \slash U_{0,1}(\frak{n}^+,p^m),
\end{align*}
where the left and right actions are given by
\[b \cdot [(f,g)] \cdot u := [(bfb^{-1},bgu)],\]
for \([(f,g)]\in X_m^{(K)}\), \(b\in B^\times\) and \(u\in U_{0,1}(\frak{n}^+,p^m)\).\\

Additionally, an action of the absolute Galois group \(G_K\) on \(X_m^{(K)}\) can be defined by
\[[(f,g)]^\sigma := [(f,\hat{f}(x_\sigma)g)],\]
where we see \(\sigma\) in \(\hat{K}\) via the Artin Reciprocity map
\[G_K \twoheadrightarrow \Gal(K^\t{ab}/K) \overset{\sim}{\longrightarrow} K^\times \backslash \hat{K}^\times\]
normalized for the choice of the geometric Frobenius. This action is posed to be compatible with Shimura's reciprocity law.

\begin{remark}
    These open compact subgroups are ``sufficiently small'' in the sense of, for example, \cite[Definition 2.1]{Fou12}.
\end{remark}

\begin{remark}
The order in the double coset is the inverse of that in \cite{LV10}, but follows the convention of \cite{Fou12} and \cite{CL16} instead.
\end{remark}

\subsection{Hecke operators on \(X_m^{(K)}\)}

We define an action of the Hecke operators on the double coset spaces \(X_m^{(K)}\), for each \(m\geq 1\).\\

During this subsection, let \(U\) denote \(U_{0,1}(\frak{n}^+,p^m)\) for any \(m\). Choose \(\gamma\in \hat{B}^\times\). Since 
\[U/(U\cap \gamma^{-1}U\gamma)\] 
is finite group, the double coset \(U\gamma U\) decomposes as a disjoint union
\[U \gamma U = \bigcup  \gamma_i U,\]
for a fixed finite choice of representatives \(\{\gamma_i\}_i\). Define the double coset operator
\begin{align*}
    [U\gamma U]: \t{Div}(X(U)) &\longrightarrow \t{Div}(X(U))\\
    [(f,g)]&\longmapsto \sum_{i} [(f,g\gamma_i)].
\end{align*}

Choose \(a\in \hat{F}^\times\), which embeds diagonally into \( \hat{B}^\times\). The corresponding Diamond operator \(\<{a}\) acts like 
\[\<{a}([(f,g)]):= [(f,ga)].\]

On the other hand, for a prime \(v\nmid p\), there is a decomposition 
\[
    U \begin{pmatrix} 
        \varpi_v & 0\\ 
        0 & 1\\ 
    \end{pmatrix} U =
    \bigcup_{a\in k_v} \begin{pmatrix} 
        \varpi_v & a\\ 
        0 & 1\\ 
    \end{pmatrix} U \bigcup \begin{pmatrix} 
        1 & 0\\ 
        0 & \varpi_v\\ 
    \end{pmatrix} U.
\]

while for \(\frak{p}|p\), there is a decomposition
\[
    U \begin{pmatrix} 
        \varpi_\frak{p} & 0\\ 
        0 & 1\\ 
    \end{pmatrix} U =
    \bigcup_{a\in k_\frak{p}} \begin{pmatrix} 
        \varpi_\frak{p} & a\\ 
        0 & 1\\ 
    \end{pmatrix} U.
\]

Denote, for \(v\nmid p\) prime,
\[\lambda_{v,a} = \begin{pmatrix} 
        \varpi_v & a\\ 
        0 & 1\\
    \end{pmatrix}\in \hat{B}^\times, \hspace{1cm} \lambda_{v,\infty} = \begin{pmatrix} 
        1 & 0\\ 
        0 & \varpi_v\\
    \end{pmatrix}\hat{B}^\times,\]
both defined to be \(1\) outside \(v\). The Hecke operators act, for primes \(v\nmid \frak{n}p\), as
    \[T_v([(f,g)]) := \sum_{a\in k_v} [(f,g\lambda_{v,a})] + [(f,g \lambda_{v,\infty})],
    \]
    and for \(\frak{p}|p\),
    \[U_\frak{p}([(f,g)]) := \sum_{a\in k_\frak{p}} [(f,g\lambda_{\frak{p},a})].
    \]
    
Define also the operator corresponding to \(p\)
\[U_p=\prod_{\frak{p}|p} U_\frak{p}.\]
In this case, one can also find a decomposition
\[
    U_m \lambda_{p,0} U_m = \bigcup_{a\in \mcal{O}_F/p\mcal{O}_F} U_m \lambda_{\frak{p},a},
\]
where, similarly as before, \(\lambda_{p,a}\) is the element with \(p\)-component \(\begin{pmatrix} 
    p & a\\ 
    0 & 1\\ 
\end{pmatrix}\) and value 1 everywhere else.

\subsection{Definition of Heegner points}
Now we introduce what it means to be a Heegner point for the double cosets \(X_m^{(K)}\). For any ideal \(\frak{c}\) of \(\mcal{O}_F\), relative to the CM extension \(K/F\), the order of conductor \(\frak{c}\) in \(K\) defined by
\[ \mcal{O}_\frak{c} := \mcal{O}_F + \frak{c}\mcal{O}_K.\]

\begin{definition}
    For an order \(\mcal{O}\) in \(K\) and an Eichler order \(R\) in \(B\), we say that a \(F\)-algebra embedding \(f\in\Hom_F(K,B)\) is an \textbf{optimal embedding} of \(\mcal{O}\) into \(R\) if it satisfies
    \[f(\mcal{O}) = R \cap f(K).\]
\end{definition}

To simplify the notation, denote \(U_m:=U_{0,1}(\frak{n}^+,p^m)\) and as usual \(U_{m,v} := U_m\otimes_{\mcal{O}_F} \mcal{O}_{F,v}\) for any finite prime \(v\) in \(F\) or \(U_{m,p}:=U_m \otimes_{\bb{Z}} \bb{Z}_p\) for the fixed rational prime \(p\).

\begin{definition}[Heegner point]\
    We say that \(P=[(f,g)]\in X_m^{(K)}\) is a \textbf{Heegner point of conductor \(\frak{c}\)} when \(f\) is an optimal embedding of \(\mcal{O}_\frak{c}\) into the order \(B\cap g\hat{R}_m g^{-1}\), and, for the \(p\)-component, it is satisfied 
    \begin{equation}
        f_p(\mcal{O}_{\frak{c},p}^\times \cap (1+p^m \mcal{O}_{K,p})^\times) = g_pU_{m,p}g_p^{-1} \cap f_p(\mcal{O}_{\frak{c},p}^\times).
    \end{equation}
\end{definition}

\subsection{Fields of rationality}
In this subsection, we study the fields \(\tilde{H}_m\), for \(m\geq1\), which are closely connected to the Galois action on Heegner points. We then relate them to certain ring class field and ray class field extensions.\\

Denote \(F[m]\) the ray class field of \(F\) with modulus \(p^m \infty\) (where \(\infty\) denotes the product of all the archimedean places). This is the extension of \(F\) given via class field theory by the narrow class group modulus \(p^m\)
\[\t{Gal}(F[m]/F) = \t{Cl}_F^+(p^m) = F^\times \backslash \bb{A}_F^\times / (F\otimes_\bb{Q} \bb{R})^\times_+ (1+p^m\hat{\mcal{O}}_F).\]

In general, for any field extension \(L/F\), we write \(L[m]\) for the compositum \(L\cdot F[m]\). This should not be confused with the respective ray class field extension of \(L\).\\

For our CM-extension \(K/F\) and an ideal \(\frak{c}\subset \mcal{O}_F\), define via class field theory the ring class field \(H_\frak{c}\) of conductor \(\frak{c}\) as the field extension with Galois group
\[\Gal(H_\frak{c}/K) = \widehat{K}^\times/K^\times \widehat{\mcal{O}}_\frak{c}^\times.\]

From now on fix \(\frak{c}\), not necessarily coprime with \(\frak{n}p\) and such that all primes \(v|\frak{c}\) are inert in \(K\). Throughout this paper, and whenever no ambiguity arises, we denote the field \(H_{\frak{c}p^m}\) by \(H_m\) and the order \(\mcal{O}_{\frak{c}p^m}\) by \(\mcal{O}_m\).\\

For \(m\geq 0\), define the group
\begin{equation}\label{identidad Z_m}
    Z_m := \hat{f}^{-1} (\hat{f} (\hat{\mcal{O}}^\times_{m}) \cap g U_m g^{-1})\subset \hat{K}^\times.
\end{equation}
It follows from the definition that 
\[ Z_m = \{ x=(x_v)_v\in \hat{\mcal{O}}_m^\times : x_p \equiv 1 \mod{p^m \mcal{O}_{K,p}}\}.\]

\

Observe \(Z_m\) has finite index in \(\hat{\mcal{O}}_{m}\) (and therefore is open), so it is the finite part of a norm group. By class field theory, define the field \(\tilde{H}_m\) determined by 
\begin{align*}
  \hat{K}^\times / K^\times Z_m = \Gal(\tilde{H}_m/K).
\end{align*}

\begin{proposition}\label{Prop. Inclusion}
    There is a field inclusion 
\[\tilde{H}_m \subset H_{m}[m].\]
\end{proposition}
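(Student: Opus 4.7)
The plan is to translate, via global class field theory over $K$, the claimed inclusion $\tilde H_m \subset H_m[m]$ into the reverse containment of norm subgroups in $\mathbb{A}_K^\times$: concretely, one wants
\[
N_{H_m[m]/K} \ \subseteq \ K^\times Z_m \cdot (K\otimes_{\mathbb{Q}}\mathbb{R})^\times \ = \ N_{\tilde H_m/K},
\]
the right-hand side being the norm subgroup cut out by the definition of $\tilde H_m$.

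First I would identify the left-hand side. Since $H_m[m] = H_m \cdot F[m]$ is a compositum over $K$, its norm subgroup is the intersection $N_{H_m/K}\cap N_{KF[m]/K}$. The first factor equals $K^\times \hat{\mathcal{O}}_m^\times (K\otimes\mathbb{R})^\times$ by the standard idelic presentation of the ring class field of conductor $\mathfrak{c}p^m$; the second, by functoriality of Artin reciprocity under base change, equals
\[
\bigl\{x\in\mathbb{A}_K^\times : N_{K/F}(x) \in F^\times\, U_F(p^m)\, (F\otimes\mathbb{R})^\times_+\bigr\}.
\]
I would then take $x = k \cdot u \cdot x_\infty$ in the intersection, with $k\in K^\times$, $u\in \hat{\mathcal{O}}_m^\times$ and $x_\infty \in (K\otimes\mathbb{R})^\times$. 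Using that $N_{K/F}(k) = k\bar k$ is automatically totally positive, the condition on $N_{K/F}(x)$ forces
\[
N_{K/F}(u) \;=\; \alpha\cdot t, \qquad \alpha\in\mathcal{O}_F^{\times,+},\ \ t\in U_F(p^m),
\]
and hence $N_{K/F}(u_p)\equiv \alpha \pmod{p^m}$ at the $p$-component. Under the canonical isomorphism $\mathcal{O}_{m,p}/p^m\mathcal{O}_{K,p}\simeq \mathcal{O}_{F,p}/p^m$, the reduction $\bar u_p \in (\mathcal{O}_{F,p}/p^m)^\times$ satisfies $\bar u_p^{\,2}\equiv \alpha \pmod{p^m}$. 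Exhibiting a global unit $\beta\in \mathcal{O}_m^\times$ that reduces to $\bar u_p$ modulo $p^m\mathcal{O}_{K,p}$ then yields $\beta^{-1}u \in Z_m$ and
\[
x \;=\; (k\beta^{-1})\cdot (\beta u) \cdot x_\infty \ \in\ K^\times Z_m (K\otimes\mathbb{R})^\times,
\]
which closes the argument.

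The main obstacle is precisely this last step, the production of $\beta$. It rests on a careful comparison of the images of $\mathcal{O}_F^{\times,+}$ and $\mathcal{O}_m^\times$ inside $(\mathcal{O}_{F,p}/p^m)^\times$: the inclusion $\mathcal{O}_F^{\times,+}\subseteq \mathcal{O}_m^\times$ places the subgroup in which $\bar u_p^{\,2}$ is constrained to live inside $\mathrm{image}(\mathcal{O}_m^\times)$, and the identification of $\hat{\mathcal{O}}_m^\times / \mathcal{O}_m^\times Z_m$ as a quotient of the inertia subgroup at $p$ in $\Gal(F[m]/F)$ via the norm map $N_{K/F}$ is what makes the two sides of the desired containment match up. This compatibility between the local reciprocity at $p$ encoded by $Z_m$ and the structure of $F[m]$ is the geometric heart of the statement; the rest is bookkeeping on the idelic side.
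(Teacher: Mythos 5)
You take essentially the same route as the paper: translate the claimed inclusion via class field theory over $K$ into a containment of norm subgroups, use that $H_m[m]$ is the compositum of $H_m$ and $F[m]$ so that the relevant norm condition is the intersection of two conditions, and finally push the problem down to $F$ via $N_{K/F}$ to extract a congruence at $p$. The paper packages this by taking $x\in V_m\cap\hat{\mathcal{O}}_m^\times$ (with $V_m$ the finite part of the norms from $K[m]$) and writing $x=\alpha+cp^m\beta$; you factor $x=k\cdot u\cdot x_\infty$ idelically and work with $u$. These are variants of the same computation.

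However, the proposal stops one step short, which you yourself flag. After obtaining $\bar{u}_p^{\,2}\equiv\alpha\pmod{p^m}$ for some $\alpha\in\mathcal{O}_F^{\times,+}$, you still need a global unit $\beta\in\mathcal{O}_m^\times$ reducing to $\bar{u}_p$ modulo $p^m\mathcal{O}_{K,p}$, and this is asserted rather than proved. The difficulty is real: knowing that $\bar{u}_p^{\,2}$ is the image of a global unit does not imply that $\bar{u}_p$ is, since (for instance when $p$ splits in $F$) $(\mathcal{O}_F/p^m)^\times$ has several square roots of a given unit class and these need not all be images of $\mathcal{O}_m^\times$. The paper avoids having to produce such a $\beta$ by working directly with the Galois group of $K[m]/F$ and the associated finite norm group $W_m$ of $F[m]/F$, showing that $N_{K/F}(x)$ lies in $W_m$ itself (with no extraneous global-unit factor), which gives $N_{K/F}(x_p)\equiv 1\pmod{p^m\mathcal{O}_F}$ on the nose rather than only up to a unit $\alpha$. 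Your closing sketch, invoking the inertia at $p$ in $\Gal(F[m]/F)$ and the comparison of $\mathcal{O}_F^{\times,+}$ with $\mathcal{O}_m^\times$, points in the right direction, but as written this step is a genuine gap and is precisely the content that the paper's argument supplies.
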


\begin{proof}
Denote 
\begin{align*}
    V_m :&= \text{finite part of } \t{Norm}_{K[m]/K}(\bb{I}_{K[m]}),\\
    W_m :&= \text{finite part of } \t{Norm}_{F[m]/F}(\bb{I}_{F[m]}).
\end{align*}
Comparing the two Galois groups 
\begin{align*}
    \Gal(\tilde{H}_{m}/K)&=\hat{K}^\times/K^\times Z_m,\\
    \Gal(H_{m}[m]/K)&= \hat{K}^\times/K^\times (V_m \cap \hat{\mcal{O}}_{m}^\times),
\end{align*}
in order to have \(\tilde{H}_m \subset H_m[m]\) it is enough showing 
\[K^\times(V_m \cap \hat{\mcal{O}}_{m}^\times) \subset K^\times Z_m.\] 
Take any \(x\in V_m\cap \hat{\mcal{O}}_{m}^\times\). What we have to do is show
\[x_p \equiv 1 \mod {p^m\mcal{O}_F}.\] 
Since \(x\in \hat{\mcal{O}}_{m}^\times\), we can express it as 
\[x=\alpha+cp^m\beta\] 
for some \(\alpha\in \hat{\mcal{O}}_F\), \(\beta\in \hat{\mcal{O}}_K\) and \(c\in\frak{c}\). In particular, since 
\[\t{Norm}_{K/F}(x_p)\equiv \alpha_p^2 \mod {p^m\mcal{O}_F},\] 
is enough showing that 
\[\alpha_p^2\equiv 1 \mod {p^m\mcal{O}_F}.\]
Let \(W_{m,\infty} := W_m \times \bb{R}_+^d\) and observe  
\begin{align*}
    \Gal(K[m]/F)&= \bb{I}_F / F^\times (W_{m,\infty}\cap \t{Norm}_{K/F}(\bb{I}_K)) \\&= \bb{I}_F / F^\times \t{Norm}_{K[m]/F}(\bb{I}_{K[m]}).
\end{align*}
Focusing on the finite part, we have
\[F^\times \t{Norm}_{K/F}(V_m) \subset F^\times W_m.\] 
Observe
\[\t{Norm}_{K/F}(x)\in F^\times W_m \cap \hat{\mcal{O}}_F =W_m,\] so it must have 
\[\t{Norm}_{K/F}(x_p)=1 \mod {p^m\mcal{O}_F}.\]
\end{proof}

\begin{remark}
    This result is analogous to \cite[Proposition 3.3]{LV10}. However, over \(\bb{Q}\), they are able to prove equality between the two fields, while the author believes that this no longer holds in general for totally real fields.
\end{remark}

\subsection{Galois action on Heegner points} \label{Sección propiedades}
In this section we prove the following relations regarding the action of the Hecke operators and Galois action on Heegner points (Proposition \ref{Propiedad 1} and Proposition \ref{Propiedad 2} below):
\begin{align*}
    U_p (P) &= \t{tr}_{H_{{m+1}}[m+1]/H_{m}[m+1]} (Q),\\
    T_v (P) &= \t{tr}_{H_{\frak{c}vp^m}[m]/H_{\frak{c}p^m}[m]} (Q).
\end{align*} 
Here \(P\in X_m^{(K)}\) is a Heegner point and \(Q\) is a point in the support of \(U_p\) (resp. \(T_v\)).\\

We will need a couple of lemmas.

\begin{lemma}[Dedekind formula] \label{Dedekind formula}
    \[|\t{Pic}(\mcal{O}_\frak{c})| = \frac{h_K |\frak{c}| \prod_{v|\frak{c}}\LL(1-\LL(\frac{K}{v}\RR)|v|^{-1}\RR)}{[\mcal{O}_K^\times:\mcal{O}_\frak{c}^\times]}.\]
\end{lemma}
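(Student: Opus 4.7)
The plan is to derive the formula from the standard short exact sequence relating the Picard group of an order in a Dedekind domain to that of its normalization. Since $\mcal{O}_\frak{c} = \mcal{O}_F + \frak{c}\mcal{O}_K$ has conductor ideal (in the sense of orders) equal to $\frak{c}\mcal{O}_K$, the general theory produces
\[
1 \longrightarrow \mcal{O}_K^\times / \mcal{O}_\frak{c}^\times \longrightarrow (\mcal{O}_K/\frak{c}\mcal{O}_K)^\times / (\mcal{O}_\frak{c}/\frak{c}\mcal{O}_K)^\times \longrightarrow \t{Pic}(\mcal{O}_\frak{c}) \longrightarrow \t{Pic}(\mcal{O}_K) \longrightarrow 1.
\]
Taking cardinalities and recognizing $|\t{Pic}(\mcal{O}_K)| = h_K$, the problem reduces to computing the order of the middle quotient.

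Next, I would identify $\mcal{O}_\frak{c}/\frak{c}\mcal{O}_K \simeq \mcal{O}_F/\frak{c}$, which follows from $\mcal{O}_\frak{c} = \mcal{O}_F + \frak{c}\mcal{O}_K$ combined with $\mcal{O}_F \cap \frak{c}\mcal{O}_K = \frak{c}$. The latter is a local statement valid because $\mcal{O}_{K,v}$ is free of rank two over $\mcal{O}_{F,v}$ at every finite place $v$, so a basis of the form $\{1,\alpha_v\}$ exists and the intersection can be computed termwise. The middle index then simplifies to $[(\mcal{O}_K/\frak{c}\mcal{O}_K)^\times : (\mcal{O}_F/\frak{c})^\times]$.

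To evaluate this index, I would decompose via the Chinese Remainder Theorem and work place by place. For each $v \mid \frak{c}$, denote by $e_v$ the $v$-adic valuation of $\frak{c}$ and compare $|(\mcal{O}_{K,v}/v^{e_v}\mcal{O}_{K,v})^\times|$ with $|(\mcal{O}_{F,v}/v^{e_v})^\times|$ using the standard Euler-product expressions for unit groups of finite quotients of Dedekind domains. A short calculation in the three cases --- $v$ split, inert, or ramified in $K$ --- shows that the local ratio equals, uniformly, $|v|^{e_v}\bigl(1 - \bigl(\tfrac{K}{v}\bigr)|v|^{-1}\bigr)$, where the Kronecker symbol takes values $+1$, $-1$, $0$ respectively. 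Multiplying over $v \mid \frak{c}$ produces the factor $|\frak{c}|\prod_{v\mid\frak{c}}\bigl(1-\bigl(\tfrac{K}{v}\bigr)|v|^{-1}\bigr)$, whence the formula follows by substitution into the exact sequence.

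The main obstacle is really only the establishment of the exact sequence in this relative setting rather than the arithmetic itself. The delicate points are the surjectivity onto $\t{Pic}(\mcal{O}_K)$, which rests on representing each class by an ideal coprime to $\frak{c}\mcal{O}_K$ via weak approximation, and the precise description of the connecting map, via extension and contraction of ideals between invertible $\mcal{O}_\frak{c}$-ideals and $\mcal{O}_K$-ideals coprime to the conductor. Once this standard structural result is granted, the remaining computation is a routine local case check.
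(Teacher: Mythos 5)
Your argument is correct and is the standard derivation of the relative Dedekind class number formula via the conductor exact sequence for the nonmaximal order $\mcal{O}_\frak{c}\subset\mcal{O}_K$; the paper itself gives no proof but defers to \cite[Section 2]{Lon12}, where precisely this kind of computation appears. Your identification of the ring-theoretic conductor with $\frak{c}\mcal{O}_K$, the simplification $\mcal{O}_\frak{c}/\frak{c}\mcal{O}_K\simeq\mcal{O}_F/\frak{c}$ (using $\mcal{O}_F\cap\frak{c}\mcal{O}_K=\frak{c}$, which holds since $\mcal{O}_K$ is locally free over $\mcal{O}_F$), and the local Euler-factor case check for split, inert and ramified $v$ all go through exactly as you describe, so the formula follows.
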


\begin{proof}
    See \cite[Section 2]{Lon12}. 
\end{proof}

\begin{lemma} \label{Linearly disjoint}
    For \(n\geq1\), \(H_{{n+1}}\) and \(H_{n}[n+1]\) are linearly disjoint over \(H_{n}\).
\end{lemma}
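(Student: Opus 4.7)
The plan is to reduce the linear-disjointness claim to a concrete equality of intersections, and then verify it using idelic class field theory. Setting $L := H_{n+1}\cdot H_n[n+1]$, linear disjointness over $H_n$ amounts to $[L:H_n] = [H_{n+1}:H_n]\cdot[H_n[n+1]:H_n]$, and, using the tower $H_n \subset H_n[n+1]\subset L$, this reduces to
\[F[n+1]\cap H_{n+1} \;=\; F[n+1]\cap H_n,\]
i.e.\ to showing that raising the ring-class tower by one level does not enlarge its intersection with the ray-class tower.

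Via class field theory, $\Gal(H_m/K) \cong \hat{K}^\times/K^\times\hat{\mcal{O}}_m^\times$, and $H_n\cdot KF[n+1]$ corresponds (over $K$) to the subgroup $K^\times\hat{\mcal{O}}_n^\times \cap K^\times N_{K/F}^{-1}(U'_{n+1})$ in $\hat{K}^\times$, where $U'_{n+1}\subset\hat{F}^\times$ is the norm group of $F[n+1]/F$. The displayed equality then translates into the factorization
\[K^\times\hat{\mcal{O}}_n^\times \;=\; (K^\times\hat{\mcal{O}}_{n+1}^\times)\cdot\bigl(K^\times\hat{\mcal{O}}_n^\times\cap K^\times N_{K/F}^{-1}(U'_{n+1})\bigr),\]
which I would check componentwise. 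Outside $p$ the three groups coincide, so the content is concentrated at primes above $p$; there one uses the explicit structure of $\mcal{O}_{n,v}^\times$ (differing in the split and inert cases of $K/F$) and the congruence $N_{K/F}(1+p^nb)\equiv 1+p^n\Tr(b)\pmod{p^{n+1}}$ valid for $n\geq 1$, together with Dedekind's formula (Lemma~\ref{Dedekind formula}), which gives $[H_{n+1}:H_n] = p^d/[\mcal{O}_n^\times:\mcal{O}_{n+1}^\times]$, a $p$-power.

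The principal obstacle will be controlling the coupling between the local pieces at different primes above $p$ through the global units $\mcal{O}_n^\times$ and $\mcal{O}_F^\times$, which must be done carefully to pass from local surjectivity statements to the single global factorization above. The hypothesis $n\geq 1$ (so that no new primes enter the conductor, making $[H_{n+1}:H_n]$ a $p$-power) and the assumption $p>3$ (in particular $p$ odd, so that squaring is bijective on any $p$-group appearing) are both essential for the argument to close.
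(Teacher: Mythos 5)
Your strategy is genuinely different from the paper's, and it is also considerably less direct. The paper computes both Galois groups explicitly as quotients of local unit groups and observes they are ``orthogonal'' by construction: it shows
\[
\Gal(H_{n+1}/H_n)\simeq \frac{(\mcal{O}_{K}/p^{n+1}\mcal{O}_K)^\times}{(\mcal{O}_F/p^{n+1}\mcal{O}_F)(\mcal{O}_{K}/p^{n}\mcal{O}_K)^\times},
\qquad
\Gal(H_{n}[n+1]/H_n)\simeq \frac{(\mcal{O}_{F}/p^{n+1}\mcal{O}_F)^\times}{(\mcal{O}_F/p^{n}\mcal{O}_F)},
\]
with the second group living inside $(\mcal{O}_{K}/p^{n+1}\mcal{O}_K)^\times/(\mcal{O}_K/p^{n}\mcal{O}_K)$, precisely in the ``$F$-part'' that the first quotient kills; linear disjointness follows at once since any common quotient of $\Gal(H_{n+1}/H_n)$ and $\Gal(H_n[n+1]/H_n)$ must then be trivial. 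Your plan instead reduces to an intersection identity $F[n+1]\cap H_{n+1}=F[n+1]\cap H_n$, recasts it as a factorization of idele subgroups, and proposes to verify it place by place at $p$ using the norm congruence $N_{K/F}(1+p^n b)\equiv 1+p^n\Tr(b)\pmod{p^{n+1}}$. That could in principle work, but it leaves two real gaps. First, the equivalence between linear disjointness and your displayed intersection equality is asserted, not proved; in an abelian setting it can be extracted from the modular law for subgroups of $\Gal(K^{\mathrm{ab}}/K)$, but that requires care because $F[n+1]$ is defined over $F$ while the ring class fields are over $K$, so you should really be working with $KF[n+1]$ and arguing that passage to $F[n+1]$ loses nothing. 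Second, you yourself flag the local-to-global step (global units coupling the local components at primes above $p$) as ``the principal obstacle'' and do not resolve it. The paper's direct Galois-group comparison avoids both issues, and in particular never needs the componentwise factorization you propose; if you want to salvage your route, you should make the modular-law reduction explicit and then carry out the local computation rather than gesture at it.
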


\begin{proof}
    The result follows from comparing the Galois group
    \[
    \Gal(H_{n+1}/H_n)\simeq \frac{(\mcal{O}_{K}/p^{n+1}\mcal{O}_K)^\times}{(\mcal{O}_F/p^{n+1}\mcal{O}_F)(\mcal{O}_{K}/p^{n}\mcal{O}_K)^\times},\]
    with
    \[\Gal(H_{n}[n+1]/H_n)\simeq \frac{(\mcal{O}_{F}/p^{n+1}\mcal{O}_F)^\times}{(\mcal{O}_F/p^{n}\mcal{O}_F)}\hookrightarrow\frac{(\mcal{O}_{K}/p^{n+1}\mcal{O}_K)^\times}{(\mcal{O}_K/p^{n}\mcal{O}_K)}.
    \]
\end{proof}

\begin{proposition} \label{Propiedad 1}
    For \(P\in X_m^{(K)}\) a Heegner point of conductor \(\frak{c}p^n\) and \(Q\) a summand in \(U_p(P)\). We have
    \[U_p (P) = \t{tr}_{H_{{n+1}}[n+1]/H_{n}[n+1]} (Q).\]
\end{proposition}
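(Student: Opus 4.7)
The plan is to decompose $U_p(P)$ explicitly into a sum of candidate Heegner points, verify that each summand has conductor $\frak{c}p^{n+1}$, match the count of summands with the degree $[H_{n+1}[n+1]:H_n[n+1]]$, and finally exhibit a transitive Galois action on the summands so that the sum realizes the desired trace.

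First, write $P=[(f,g)]$ with $f$ an optimal embedding of $\mcal{O}_n=\mcal{O}_{\frak{c}p^n}$. Using the coset decomposition of $U_m\lambda_{p,0}U_m$ recorded in Section~3.2, one obtains
\[
U_p(P)\;=\;\sum_{a\in \mcal{O}_F/p\mcal{O}_F}Q_a,\qquad Q_a:=[(f,\,g\lambda_{p,a})],
\]
with exactly $N(p):=|\mcal{O}_F/p\mcal{O}_F|$ summands. Each $Q_a$ differs from $P$ only at $p$; conjugation by the $p$-component of $\lambda_{p,a}$ modifies the local Eichler order in a controlled way, and a direct matrix computation at $p$ shows that $f$ is now optimal for $\mcal{O}_{n+1}$ into the new order and that the $p$-adic condition of a Heegner point is upgraded from exponent $n$ to $n+1$. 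Hence every $Q_a$ is a Heegner point of conductor $\frak{c}p^{n+1}$.

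Second, I would compute the degree $[H_{n+1}[n+1]:H_n[n+1]]$. By Lemma~\ref{Linearly disjoint}, $H_{n+1}$ and $H_n[n+1]$ are linearly disjoint over $H_n$, so this degree equals $[H_{n+1}:H_n]$. Applying Lemma~\ref{Dedekind formula} to the conductors $\frak{c}p^n$ and $\frak{c}p^{n+1}$, and using that $p$ is unramified in $K/F$ together with the stabilization of the unit-index ratio for $n$ sufficiently large, one finds exactly $N(p)$, matching the count of summands from the previous paragraph.

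The hard step, and the main obstacle, is showing that the $N(p)$ points $\{Q_a\}_a$ form a single orbit under $\Gal(H_{n+1}[n+1]/H_n[n+1])$. I would split this into two claims. \textbf{(i) Rationality:} using the Shimura-type action $[(f,g)]^\sigma=[(f,\hat{f}(x_\sigma)g)]$ and applying Proposition~\ref{Prop. Inclusion} at level $n+1$, show that the stabilizer of any $Q_a$ in $\hat{K}^\times$ contains the norm subgroup cutting out $H_{n+1}[n+1]$, so each $Q_a$ is rational over $H_{n+1}[n+1]$. \textbf{(ii) Transitivity:} for $a\neq a'$ one must exhibit an idele $x\in\hat{K}^\times$ supported at $p$, lying in the norm subgroup defining $H_n[n+1]$ but not in the one defining $H_{n+1}[n+1]$, such that $\hat{f}(x)g\lambda_{p,a}\equiv g\lambda_{p,a'}\cdot u \pmod{B^\times}$ for some $u\in U_m$. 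This reduces to a manageable local computation at $p$ using the matrix description of $f_p$ and $\lambda_{p,a}$ together with the $p$-adic optimality condition on $P$. Combining (i) and (ii) with the degree count forces the orbit to be simply transitive, and therefore $U_p(P)=\t{tr}_{H_{n+1}[n+1]/H_n[n+1]}(Q)$ for any summand $Q$.
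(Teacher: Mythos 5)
Your outline uses the same ingredients as the paper — the coset decomposition $U_p(P)=\sum_{a\in\mcal{O}_F/p\mcal{O}_F} [(f,g\lambda_{p,a})]$, Lemma~\ref{Linearly disjoint} to identify $\Gal(H_{n+1}[n+1]/H_n[n+1])$ with $\Gal(H_{n+1}/H_n)$, and Lemma~\ref{Dedekind formula} to count the latter as $|\mcal{O}_F/p\mcal{O}_F|$ — but you run the Galois step in the opposite direction. You propose to prove pairwise transitivity directly (for each $a\neq a'$, produce an idele moving $Q_a$ to $Q_{a'}$) and flag this as the main obstacle, left as a ``manageable local computation.'' The paper instead takes an arbitrary $\sigma\in\Gal(H_{n+1}[n+1]/H_n[n+1])$, represents it by some $x\in Z_n$, and uses the Heegner condition on $P$ in the form $\hat{f}(x)=gug^{-1}$, $u\in U_n$, to compute
\[
Q^\sigma = [(f,\hat{f}(x)g\lambda_{p,a})] = [(f,gu\lambda_{p,a})] = [(f,gu\lambda_{p,a}u^{-1})] = [(f,g\lambda_{p,b})],
\]
where the third equality absorbs $u^{-1}$ into the right coset by $U$. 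This shows the Galois orbit of $Q$ is contained in the summands of $U_p(P)$, and the conclusion then follows by matching cardinalities; no idele needs to be exhibited for a prescribed target $a'$. Your step (i) (rationality of each $Q_a$ over $H_{n+1}[n+1]$) and your observation that each summand is a Heegner point of conductor $\frak{c}p^{n+1}$ are true but not invoked in the paper's proof. So your proposal is a correct blueprint, but the local computation you identify as hard is precisely the one the paper dissolves with the absorption trick above, which turns the Galois step into a one-line manipulation.
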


\begin{proof} 
    Denote \(P=[(f,g)]\). The element \(Q\) can be expressed by hypothesis as \(Q=[(f,g\lambda_{p,a})]\), where 

    \[ \lambda_{p,a} = \begin{cases}
    \begin{pmatrix} p & a \\ 0 & 1\\ \end{pmatrix} & \text{at } p,\\
    1  & \text{at } v\neq p,
    \end{cases}\]
    for some \(a\in \mcal{O}_F/p\mcal{O}_F\). Take any
    \begin{align*}
    \sigma\in \Gal(H_{n+1}[n+1]/H_n[n+1]) &= K^\times (V_{n+1}\cap \hat{\mcal{O}}^\times_{n})  / K^\times (V_{n+1}\cap \hat{\mcal{O}}^\times_{n+1}).    
    \end{align*}
    
    Using similar arguments as in the proof of Proposition \ref{Prop. Inclusion}, we show that \(\sigma\) can be represented by some \(x\in Z_n\). Using that \([(f,g)]\) is a Heegner point of conductor \(\frak{c}p^n\), we know using (\ref{identidad Z_m}) that \(\hat{f}(x) = g u g^{-1}\) for some \(u\in U_n\), so we can express 
    \begin{align*}
        Q^\sigma &= [(f, \hat{f}(x) g \lambda_{p,a} )] \\ 
        &= [(f,gu\lambda_{p,a} )]\\ 
        &= [(f,gu\lambda_{p,a}u^{-1} )]\\ 
        &= [(f, g\lambda_{p,b})] \in U_p(P),
    \end{align*}
    for some \(b\) in \(\mcal{O}_F/p\mcal{O}_F\). All we have to check is that, in this way, we get all the \(|\mcal{O}_F/p\mcal{O}_F|\) summands. From Lemma \ref{Linearly disjoint} follows
    \[\Gal(H_{n+1}[n+1]/H_n[n+1]) \simeq \Gal(H_{n+1}/H_n),\]
    and this last group has exactly \(|\mcal{O}_F/p\mcal{O}_F|\) elements due to Dedekind's formula (Lemma \ref{Dedekind formula}).
\end{proof}

\begin{proposition} \label{Propiedad 2}
    Let \(v\nmid \frak{nc}p^m\) be a prime that is inert in \(K\). Then
    \[T_v (P) = \t{tr}_{H_{\frak{c}vp^n}[n]/H_{\frak{c}p^n}[n]} (Q).\]
\end{proposition}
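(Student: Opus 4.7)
The plan is to mirror closely the strategy of Proposition \ref{Propiedad 1}. Writing \(P=[(f,g)]\), any summand \(Q\) of \(T_v(P)\) has the form \(Q=[(f,g\lambda)]\) with \(\lambda\) equal either to some \(\lambda_{v,a}\) (\(a\in k_v\)) or to \(\lambda_{v,\infty}\); all these elements are trivial away from \(v\). I would like to show that the orbit of \(Q\) under \(\Gal(H_{\frak{c}vp^n}[n]/H_{\frak{c}p^n}[n])\) gives exactly the \(|v|+1\) summands of \(T_v(P)\). The proof naturally splits into two parts: a counting argument (the Galois group has the expected order) and a transitivity argument (the Galois action realizes every coset representative).

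For the counting step, I would first apply Dedekind's formula (Lemma \ref{Dedekind formula}). Since \(v\) is inert in \(K\), the Euler factor \(1-\left(\frac{K}{v}\right)|v|^{-1}=1+|v|^{-1}\), and, because \(v\nmid\frak{nc}p^m\), the unit indices cancel, so
\[
\bigl[H_{\frak{c}vp^n}:H_{\frak{c}p^n}\bigr]=|v|+1.
\]
Then I would establish an analogue of Lemma \ref{Linearly disjoint}: the extension \(H_{\frak{c}vp^n}/H_{\frak{c}p^n}\) is ramified only above \(v\), while \(H_{\frak{c}p^n}[n]/H_{\frak{c}p^n}\) is ramified only above \(p\), so the two are linearly disjoint over \(H_{\frak{c}p^n}\). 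Consequently
\[
\bigl|\Gal(H_{\frak{c}vp^n}[n]/H_{\frak{c}p^n}[n])\bigr|=|v|+1,
\]
which matches the number of summands of \(T_v(P)\).

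For the action, any such \(\sigma\) is represented via class field theory by an idele \(x_\sigma\in\widehat{\mcal{O}}^\times_{\frak{c}p^n}\) that can be chosen trivial at every place other than \(v\). Since \(v\nmid\frak{n}^+p^m\), the local component of the level is maximal, \(U_{m,v}=\mathrm{GL}_2(\mcal{O}_{F,v})\), and the optimality of \(f\) gives \(\hat{f}_v(x_\sigma)\in g_v U_{m,v}g_v^{-1}\). Hence \(\hat{f}(x_\sigma)g=gu_\sigma\) for some \(u_\sigma\in U_m\) supported at \(v\), and
\[
Q^\sigma=\bigl[(f,\,gu_\sigma\lambda)\bigr].
\]

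The main obstacle is then showing that the \(|v|+1\) elements \(u_\sigma\lambda\) fall into pairwise distinct right \(U_m\)-cosets, and therefore recover all representatives in the decomposition of \(T_v(P)\). This is a purely local matter at \(v\): the quotient \(f_v(\mcal{O}_{K,v}^\times)/f_v((\mcal{O}_{F,v}+\varpi_v\mcal{O}_{K,v})^\times)\) is isomorphic to \(\mathbb{F}_{|v|^2}^\times/\mathbb{F}_{|v|}^\times\) of order \(|v|+1\), since \(v\) is inert and thus \(K_v/F_v\) is unramified quadratic. Identifying \(f_v(K_v)\) with a non-split torus inside \(M_2(F_v)\), one checks by explicit matrix multiplication that the left action of this torus permutes the \(|v|+1\) representatives \(\{\lambda_{v,a}\}_{a\in k_v}\cup\{\lambda_{v,\infty}\}\) of \(U_{m,v}\backslash U_{m,v}\lambda_{v,0}U_{m,v}/U_{m,v}\) simply transitively. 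Combining this with the counting step forces the Galois orbit of \(Q\) to coincide with \(T_v(P)\), proving the proposition.
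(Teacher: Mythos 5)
Your proposal follows the same route as the paper: the text proves Proposition~\ref{Propiedad 2} by referring back to the argument of Proposition~\ref{Propiedad 1}, together with the observation that Dedekind's formula produces \(|v|+1\) extra summands at an inert prime. Your reconstruction --- representing \(\sigma\) by an idele supported at \(v\), invoking optimality of \(f\) at the unramified level place \(v\) to write \(\hat{f}(x_\sigma)g=gu_\sigma\) with \(u_\sigma\in U_m\), and matching cardinalities via Dedekind's formula after a linear-disjointness step --- is precisely that argument. You do depart in two welcome ways. Your linear-disjointness is argued by comparing ramification loci (above \(v\) versus above \(p\)), a cleaner substitute for the explicit Galois-group comparison of Lemma~\ref{Linearly disjoint}. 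More substantively, you go beyond the paper's cardinality match and check \emph{simple transitivity}: the quotient \(f_v(\mathcal{O}_{K,v}^\times)/f_v((\mathcal{O}_{F,v}+\varpi_v\mathcal{O}_{K,v})^\times)\simeq\mathbb{F}_{|v|^2}^\times/\mathbb{F}_{|v|}^\times\), sitting inside the non-split torus, acts on the \(|v|+1\) right cosets in \(U_{m,v}\lambda_{v,0}U_{m,v}/U_{m,v}\simeq\mathbb{P}^1(\mathbb{F}_{|v|})\) without fixed points (a non-split torus has no rational eigenline), hence simply transitively since the two sets have the same order. The proof of Proposition~\ref{Propiedad 1}, which Proposition~\ref{Propiedad 2} inherits, records only the cardinality match and leaves this freeness implicit; your local computation closes that small gap while remaining entirely in the spirit of the original.
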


\begin{proof}
    The proof follows just as in Proposition \ref{Propiedad 1}. Note that since \(v\) is inert in \(K\), Dedekind's formula predicts extra summands, which fits the behaviour of the operator \(T_v\).
\end{proof}

\section{Gross curves and Shimura curves over totally real  fields} \label{Gross curves and Shimura curves over totally real  fields}

The double cosets \(X^{(K)}_m\) correspond to sets of points of different curves depending on whether we are in the definite or indefinite settings. We denote the curve attached to \(X_m^{(K)}\) as \(X_m\). During this subsection, we explain this fact and give a description of both curves. See \cite{LV10} or \cite{BD96} for details over \(\bb{Q}\).\\

As in previous sections, fix a quaternion algebra \(B/F\) and an embedding \(\tau:F\hookrightarrow \bb{R}\) such that \(B\otimes_{F,\sigma} \bb{R}= \bb{H}\) for all \(\sigma\neq\tau\). Denote \(B_\infty:= B\otimes_{F,\tau} \bb{R}\). Recall also the congruence subgroup \(U_{0,1}(\frak{n}^+,p^m)\).\\

In both settings, we have the following natural diagram between curves, which is now presented in order to fix the name of the maps.

\[\begin{tikzcd}
	\dots && {X_m} && {X_{m-1}} && \dots
	\arrow["{\alpha_{m+1}}", from=1-1, to=1-3]
	\arrow["{\alpha_{m}}", from=1-3, to=1-5]
	\arrow["{\alpha_{m-1}}", from=1-5, to=1-7]
\end{tikzcd}\]

\subsection{Totally definite setting: Gross curves}

In this setting \(B_\infty = \bb{H}\). Consider the homogeneous space defined by
\[Y(A):=\{x\in B\otimes_{F,\tau} A | x\neq 0, \t{Norm}(x)=\t{Trace}(x)=0\}/A^\times,\]
where \(A\) is any \(F\)-algebra. The group \(B^\times\) acts on \(Y\) via conjugation. This object substitutes the upper- and bottom-half planes in the definite setting.

\begin{lemma}\label{Identificacion 1}
    There is a \(B^\times\)-equivariant identification between \(\Hom_{F,\tau}(K,B)\) and \(Y(K)\).
\end{lemma}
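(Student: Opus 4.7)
My plan is to exhibit a \(B^\times\)-equivariant bijection
\[
\Phi:\ \Hom_{F,\tau}(K,B)\ \longrightarrow\ Y(K)
\]
by using the splitting \(K\otimes_F K\cong K\times K\). Given \(f\in\Hom_{F,\tau}(K,B)\), base-change to obtain an \(F\)-algebra map \(f\otimes\mathrm{id}_K:K\otimes_F K\to B\otimes_{F,\tau}K\). The two primitive idempotents \(e^{+},e^{-}\in K\otimes_F K\) are canonically distinguished by the archimedean datum \(\tau\) (via the implicit choice of an extension of \(\tau\) to \(K\) coming from the fixed embedding \(\bar{\bb Q}\hookrightarrow\bar{\bb Q}_p\)), and they map to orthogonal rank-one idempotents \(e_f^{\pm}\in B\otimes_{F,\tau}K\). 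The corner \(e_f^{+}(B\otimes_{F,\tau}K)e_f^{-}\) is a one-dimensional \(K\)-subspace consisting of nilpotent elements, all of which have vanishing reduced trace and reduced norm. I define \(\Phi(f)\in Y(K)\) to be the class of any nonzero element there.

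Next I would establish bijectivity. For injectivity, a nilpotent representative of \(\Phi(f)\) has image and kernel lines in the splitting \(B\otimes_{F,\tau}K\cong M_2(K)\), and these recover the ordered pair \((e_f^{+},e_f^{-})\); from the pair one recovers \(f(\alpha)\) for any trace-zero generator \(\alpha\in K\), hence \(f\) itself. For surjectivity, take \([x]\in Y(K)\) with \(x^2=0\); its image and kernel yield orthogonal idempotents \(e,1-e\in B\otimes_{F,\tau}K\) whose \(F\)-subalgebra is isomorphic to \(K\times K\cong K\otimes_F K\). The Galois involution acting on the second tensor factor sends \(x\) to a nilpotent \(\bar x\) with image and kernel the Galois conjugates of those of \(x\); the totally definite hypothesis forces \(Y(F)=\emptyset\) (the reduced norm on \(B^{0}\) is anisotropic after base change along \(\tau\)), so the image line of \(x\) cannot be Galois-stable. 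This yields \(\bar e=1-e\), whence the subalgebra descends along Galois to an embedded copy of \(K\) inside \(B\), producing an \(f\) with \(\Phi(f)=[x]\).

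Finally, for \(B^\times\)-equivariance: \(b\in B^\times\) acts on \(\Hom_{F,\tau}(K,B)\) by \(f\mapsto bfb^{-1}\) and on \(Y(K)\) through conjugation by \(b\otimes 1\in(B\otimes_F K)^\times\), and both operations preserve reduced trace and reduced norm. The construction of \(\Phi\) commutes with these conjugations, since \((b\otimes 1)\, e_f^{\pm}\,(b\otimes 1)^{-1}=e_{bfb^{-1}}^{\pm}\), so \(\Phi(bfb^{-1})=(b\otimes 1)\,\Phi(f)\,(b\otimes 1)^{-1}\) by a direct check on corner representatives. The step I expect to be the main obstacle is the Galois-descent argument in surjectivity: one must cleanly tie the fact that \(\bar e=1-e\) to the totally definite hypothesis (i.e.\ to the anisotropy of the norm form on \(B^{0}\) along \(\tau\)) and to verify that the recovered embedding is canonically well defined; once this is in place, the remaining steps are formal manipulations with idempotents in the quaternion algebra \(B\otimes_{F,\tau}K\).
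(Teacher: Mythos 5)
Your overall route---push the two idempotents of \(K\otimes_F K\cong K\times K\) through \(f\otimes\mathrm{id}_K\) into \(B\otimes_{F,\tau}K\cong M_2(K)\) and take the off-diagonal corner---is a sound way to build \(\Phi\), and the \(B^\times\)-equivariance check is fine. (The paper itself simply cites Gross, Bertolini--Darmon and Longo--Vigni, so any honest proof along these lines is in the same spirit.)

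However, both of your bijectivity arguments rest on a false premise. If \(x\in M_2(K)\) is nonzero with \(x^2=0\) (which is forced by \(\mathrm{Tr}(x)=\mathrm{Norm}(x)=0\) and Cayley--Hamilton), then \(\im(x)\subseteq\ker(x)\) and both are one-dimensional, so \(\im(x)=\ker(x)\). A square-zero nilpotent therefore determines a \emph{single} line \(\ell\subset K^2\), not two, and its ``image and kernel'' do not yield a pair of orthogonal rank-one idempotents. This breaks your injectivity step as stated: from \(\Phi(f)\) you only recover \(\ell=\im(e_f^+)=\ker(e_f^-)\), which alone does not pin down the ordered pair \((e_f^+,e_f^-)\). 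The same conflation reappears at the start of your surjectivity step. The correct recovery, which you already gesture at via \(\bar x\), is to pair \(\ell\) with its Galois conjugate \(\bar\ell\) (Galois acting on the \(K\)-factor of \(B\otimes_F K\)); since \(Y(F)=\emptyset\) (equivalently, \(B\) is a division algebra, or the norm on \(B^0\) is anisotropic along \(\tau\)), one has \(\ell\neq\bar\ell\), and then \(e_f^+\) is forced to be the projector with image \(\ell\) and kernel \(\bar\ell\). That fixes both injectivity and surjectivity, but you should rewrite both steps so that the pair of lines comes from \((\ell,\bar\ell)\) rather than from the (coincident) image and kernel of a single nilpotent.
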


\begin{proof}
    The proof is the same as in the rational case, since we are only considering the factor coming from \(\tau\). See \cite[Section 2.1]{LV10}, \cite[Section 1]{BD96} or \cite[Section 3]{Gro87}.
\end{proof}

\begin{definition}[Gross curve]
The \(\textbf{Gross curve}\) (also called \textbf{definite Shimura curve}) of level \(U_{0,1}(\frak{n}^+,p^m)\) is defined as the double coset
    \begin{align*}
    X_m := B^\times \backslash (Y \times \hat{B}^\times)/ U_{0,1}(\frak{n}^+,p^m),
\end{align*}
where the action of \(\hat{B}^\times\) on \(Y\) is trivial.
\end{definition}

Notice the similarities with classical Shimura curves. Also, Lemma \ref{Identificacion 1} brings us the following corollary that gives us the relation between the sets \(X_m^{(K)}\) studied along the article and Gross curves.
\begin{corollary}
    \(X_m^{(K)}=X_m(K)\). 
\end{corollary}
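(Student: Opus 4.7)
The plan is to deduce the identification directly from Lemma~\ref{Identificacion 1}, by checking that the $B^\times$-equivariant bijection $\Hom_{F,\tau}(K,B) \simeq Y(K)$ descends through the double-coset presentation.

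First, I would spell out what $X_m(K)$ means for the Gross curve presented as a double coset. Since $\hat{B}^\times$ and the open compact $U_{0,1}(\frak{n}^+,p^m)$ act only on the second factor (and that factor does not depend on the $F$-algebra $A$), taking $K$-points only affects the first factor: on the level of points one has
\[
X_m(K) = B^\times \backslash \bigl(Y(K) \times \hat{B}^\times\bigr) \big/ U_{0,1}(\frak{n}^+,p^m).
\]
Here the left $B^\times$-action on $Y(K)$ is by conjugation (the definition of $Y$) and the left action on $\hat{B}^\times$ is by left multiplication, exactly as in the definition of $X_m^{(K)}$.

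Next, I would invoke Lemma~\ref{Identificacion 1} to replace $Y(K)$ by $\Hom_{F,\tau}(K,B)$. Because this bijection is $B^\times$-equivariant (with conjugation on both sides) and because $\hat{B}^\times$ acts trivially on both $Y$ and $\Hom_{F,\tau}(K,B)$, the product map
\[
\Hom_{F,\tau}(K,B) \times \hat{B}^\times \;\overset{\sim}{\longrightarrow}\; Y(K) \times \hat{B}^\times
\]
is $B^\times$-equivariant on the left and $U_{0,1}(\frak{n}^+,p^m)$-equivariant on the right. Passing to double cosets gives the desired bijection $X_m^{(K)} \simeq X_m(K)$.

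The only point that requires any care is the first step, namely justifying that $K$-points of $X_m$ commute with the double quotient. The cleanest way is to note that the quotients by $B^\times$ and by $U_{0,1}(\frak{n}^+,p^m)$ are quotients of a set-valued functor by group actions that do not involve the $F$-algebra $A$, so evaluation at $A=K$ commutes with them; alternatively, one appeals to the fact that the Gross curve is $0$-dimensional and its set of $K$-points is literally the described double coset. Once that is in hand, the rest is a formal consequence of Lemma~\ref{Identificacion 1}, and no further computation is needed.
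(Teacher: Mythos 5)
Your argument is correct and matches the paper's implicit reasoning: since $\widehat{B}^\times$ acts trivially on $Y$ and the double-quotient actions do not involve the $F$-algebra variable, $X_m(K)$ unwinds to $B^\times\backslash\bigl(Y(K)\times\widehat{B}^\times\bigr)/U_{0,1}(\frak{n}^+,p^m)$, and Lemma~\ref{Identificacion 1} then substitutes $\Hom_{F,\tau}(K,B)$ for $Y(K)$. One aside is wrong, however: the Gross curve is not $0$-dimensional --- it is a disjoint union of genus-zero curves (conics over $F$), hence a curve, as the lemma $X_m=\bigsqcup_i\Gamma_m^i\backslash Y$ immediately following the corollary records. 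The justification to keep is the first one you give, namely that $X_m$ is defined as a double coset of the set-valued functor $Y$, so evaluation at $A=K$ commutes with the quotient by definition rather than by any representability or descent argument.
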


The geometry of Gross curves is less intricate than that of Shimura curves.  
In fact, the curves \(X_m\) admit a simpler description as a disjoint union of conics.   

\begin{lemma}
Let \(m\geq1\), set \(U:=U_{0,1}(\frak{n}^+,p^m)\), and choose double-coset representatives
\[
\hat{B}^\times=\bigsqcup_{i=1}^{h(m)} B^\times g_i U,\qquad
\Gamma_m^i:=g_i U g_i^{-1}\cap B^\times.
\]
Then
\[
X_m = B^\times\backslash\bigl(Y\times \widehat{B}^\times\bigr)/U
= \bigsqcup_{i=1}^{h(m)} \Gamma_m^i\backslash Y.
\]
\end{lemma}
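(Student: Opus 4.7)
The plan is to construct a canonical bijection
\[
\Phi: \bigsqcup_{i=1}^{h(m)} \Gamma_m^i\backslash Y \longrightarrow X_m,\qquad [y]_i \longmapsto [(y,g_i)],
\]
by singling out, in each class of $X_m$, a representative whose adelic component lies in the chosen list $\{g_1,\dots,g_{h(m)}\}$. The key point, and really the only nontrivial observation, is to check that the stabilizer of $(y,g_i)$ modulo the $B^\times\times U$-action on $Y\times\widehat{B}^\times$ translates exactly into the conjugation action of $\Gamma_m^i:=g_iUg_i^{-1}\cap B^\times$ on $Y$.

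For well-definedness, I would take $\gamma\in\Gamma_m^i$ and write $\gamma=g_iug_i^{-1}$ for some $u\in U$, so that $\gamma g_i u^{-1}=g_i$. Recalling from Section~3.1 that the left $B^\times$-action on the first factor $Y$ is by conjugation (via the identification $\Hom_{F,\tau}(K,B)\simeq Y(K)$ of Lemma \ref{Identificacion 1}), one computes
\[
\gamma\cdot(y,g_i)\cdot u^{-1}=(\gamma y\gamma^{-1},\gamma g_i u^{-1})=(\gamma y\gamma^{-1},g_i)
\]
inside $Y\times\widehat{B}^\times$, which shows $[(y,g_i)]=[(\gamma y\gamma^{-1},g_i)]$ in $X_m$. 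Hence $\Phi$ is well-defined.

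Surjectivity follows immediately from the decomposition $\widehat{B}^\times=\bigsqcup_i B^\times g_i U$: any representative $(y,g)$ can be rewritten as $(y,bg_iu)$ for some $b\in B^\times$, $u\in U$ and a unique $i$, and then
\[
[(y,g)]=[(b^{-1}yb,g_i)]=\Phi([b^{-1}yb]_i).
\]
For injectivity, suppose $[(y_1,g_i)]=[(y_2,g_j)]$, so there exist $b\in B^\times$ and $u\in U$ with $y_2=by_1b^{-1}$ and $g_j=bg_iu$. The second identity places $b$ simultaneously in $B^\times g_i U$ and in $B^\times g_j U$; disjointness of the chosen double cosets forces $i=j$ and $b\in g_iUg_i^{-1}\cap B^\times=\Gamma_m^i$. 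Hence $y_2=by_1b^{-1}$ with $b\in\Gamma_m^i$, meaning $[y_1]_i=[y_2]_i$. This simultaneously yields injectivity and the disjointness of the union on the left-hand side, completing the proof.

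The argument is essentially formal; the only subtle point — and the step I would be most careful about — is keeping track of the fact that, under the identification of Lemma \ref{Identificacion 1}, the left action of $B^\times$ on $Y$ really is conjugation, so that the equality $\gamma g_iu^{-1}=g_i$ produces the equivalence $(y,g_i)\sim(\gamma y\gamma^{-1},g_i)$ with the correct twist, rather than a plain left multiplication.
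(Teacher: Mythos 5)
Your proof is correct and follows essentially the same route as the paper's: decompose $\widehat{B}^\times$ via the chosen coset representatives, reduce to the stabilizer $\Gamma_m^i=g_iUg_i^{-1}\cap B^\times$ on each piece, and use that $U$ acts trivially on $Y$ while $B^\times$ acts by conjugation. The only difference is presentational — you package the argument as an explicit bijection $\Phi$ with separate well-definedness, surjectivity and injectivity checks, whereas the paper compresses this into the identification $B^\times\backslash(Y\times g_iU)/U\simeq\Gamma_m^i\backslash Y$; your version has the minor virtue of avoiding the mild abuse of notation in $B^\times\backslash(Y\times g_iU)/U$ (the set $Y\times g_iU$ is not $B^\times$-stable), but the underlying mathematics is identical.
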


\begin{proof}
Using the disjoint union for \(\widehat{B}^\times\),
\[
X_m \simeq \bigsqcup_{i=1}^{h(m)} B^\times\backslash(Y\times g_i U)/U.
\]
Since \(U\) acts trivially on \(Y\), the map \([(y,g_i u)]\mapsto (y,g_i)\) identifies
\((Y\times g_i U)/U \simeq Y\).
Under this identification, the left \(B^\times\)-action on \(Y\times\{g_i\}\) has stabilizer
\(\{b\in B^\times : b g_i\in g_i U\}=\Gamma_m^i\), hence
\[
B^\times\backslash(Y\times g_i U)/U \simeq \Gamma_m^i\backslash Y.
\]
Taking the disjoint union over \(i\) gives the claim.
\end{proof}

One can define an action of the Hecke operators via correspondences described by Brandt matrices. 
Denote the resulting Hecke algebra by 
\[\bb{B}_m.\] 
This construction is explained in \cite[Section 1]{Gro87} over \(\bb{Q}\) and for quaternion algebras over general number fields in \cite[Chapter 41]{Voi21}.

\subsection{Indefinite setting: Shimura curves}
In this setting \(B\otimes_{F,\tau} \bb{R}=\t{M}_2(\bb{R})\).

The following description of the upper and lower half-planes plays a key role in viewing CM points as optimal embeddings. It also provides a uniform framework to treat simultaneously the (totally) definite and indefinite settings. This result is the analogue of Lemma \ref{Identificacion 1}, although it applies to the \(\bb{C}\)-points rather than the \(K\)-points.

\begin{lemma}\label{Identificacion 2}
    There is a \(B^\times\)-equivariant identification between \(\Hom_\bb{R}(\bb{C},B_\infty)\) and \(\bb{P}=\bb{C}\backslash \bb{R}\).
\end{lemma}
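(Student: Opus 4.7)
The plan is to reduce the identification to an elementary computation about matrices squaring to $-I$, exactly as in the rational case treated in \cite[Section 2.1]{LV10} or \cite[Section 1]{BD96}; the only content specific to the totally real setting is that the archimedean place $\tau$ is the unique split one, so $B_\infty = M_2(\bb{R})$ and the conjugation action of $B^\times$ factors through $\mathrm{GL}_2(\bb{R})$.

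First, I would use the fact that an \(\bb{R}\)-algebra homomorphism $f \in \Hom_\bb{R}(\bb{C}, B_\infty)$ is completely determined by $J := f(i)$, and that assigning $f \mapsto J$ identifies $\Hom_\bb{R}(\bb{C}, B_\infty)$ with
\[
\mcal{J} := \{ J \in M_2(\bb{R}) : J^2 = -I \}.
\]
An elementary computation (or Cayley--Hamilton together with $\Tr(J) = 0$, $\det(J) = 1$) shows that each $J \in \mcal{J}$ has characteristic polynomial $X^2 + 1$ over $\bb{R}$, so over $\bb{C}$ it has two distinct eigenvalues $\pm i$ with a pair of conjugate eigenlines in $\bb{C}^2$.

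Next, I would define the bijection $\mcal{J} \longleftrightarrow \bb{P}$ by sending $J$ to the unique $z \in \bb{C} \setminus \bb{R}$ such that $(z,1)^T \in \bb{C}^2$ is an eigenvector of $J$ with eigenvalue $i$. The inverse map sends $z = x + iy \in \bb{C} \setminus \bb{R}$ to the explicit matrix
\[
J_z := \frac{1}{y}\begin{pmatrix} x & -(x^2+y^2) \\ 1 & -x \end{pmatrix},
\]
for which one checks directly $J_z^2 = -I$ and $(z,1)^T$ is a $(+i)$-eigenvector. These two assignments are mutually inverse, giving a bijection $\Hom_\bb{R}(\bb{C},B_\infty) \simeq \bb{P}$. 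Note here that $\bb{P} = \bb{C} \setminus \bb{R}$ corresponds to $\bb{P}^1(\bb{C}) \setminus \bb{P}^1(\bb{R})$ via the chart $z \leftrightarrow [z:1]$, so each element of $\mcal{J}$ produces a genuine non-real point.

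Finally, equivariance follows from the definitions: for $b \in B^\times$ acting via $\tau$ as an element of $\mathrm{GL}_2(\bb{R})$, the conjugated embedding $b f b^{-1}$ sends $i$ to $bJb^{-1}$, whose $(+i)$-eigenvector is $b \cdot (z,1)^T$, i.e.\ the image of $[z:1]$ under the standard linear (Möbius) action of $\mathrm{GL}_2(\bb{R})$ on $\bb{P}^1(\bb{C})$. Thus the bijection $f \mapsto z_f$ intertwines conjugation on $\Hom_\bb{R}(\bb{C},B_\infty)$ with the $B^\times$-action on $\bb{P}$. The main (and only mild) subtlety is the choice of which eigenvalue to pick in order to get an honest bijection rather than a 2-to-1 map: singling out the $(+i)$-eigenline gives a canonical choice, and the conjugate embedding $\bar f$ corresponds to the conjugate point $\bar z$, which is consistent with $J_{\bar z} = -J_z$.
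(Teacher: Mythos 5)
Your argument is correct and is precisely the standard one that the paper delegates to \cite[Section 2.2]{LV10} and \cite[Section 1]{BD96}: reduce to matrices $J$ with $J^2=-I$, pick out the $(+i)$-eigenline, and check Möbius equivariance. Since the paper itself gives no proof beyond the citation, you have simply written out the argument the references contain, so there is nothing to compare.
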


\begin{proof}
    As it happened in the definite setting, the proof is identical to the rational case. See \cite[Section 2.2]{LV10} or \cite[Section 1]{BD96}.
\end{proof}

The group \(B^\times\) acts on \(\mcal{H}\) via fractional linear transformations under the composition
\[B^\times \otimes_{F,\tau} \bb{R}= \t{GL}_2(\bb{R}) \curvearrowright  \mcal{H}.\]

\begin{definition}
The \textbf{(indefinite) Shimura curve} of level \(U_{0,1}(\frak{n}^+,p^m)\) is the complex variety defined by the double coset
    \[X_m(\bb{C})=B^\times \backslash (\mcal{H} \times \hat{B}^\times) / U_{0,1}(\frak{n}^+,p^m) \cup \{ \t{cusps} \}.\] 
\end{definition}

\begin{remark}
   There will only be cusps in the classical case: \(F=\bb{Q}\) and \(B=\t{M}_2(\bb{Q})\).
\end{remark}

The curve \(X_m\) is a, possibly disconnected, compact Riemann surface. It admits a canonical model defined over \(F\). The relation of \(X_m\) to the double coset \(X_m^{(K)}\) is a consequence of Lemma \ref{Identificacion 2}.

\begin{corollary}
    There is an injection \(X_m^{(K)} \hookrightarrow X_m(\bb{C})\).
\end{corollary}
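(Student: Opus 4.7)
The plan is to use Lemma \ref{Identificacion 2} to lift \(\Hom_{F,\tau}(K,B)\) into \(\mcal{H}\) in a \(B^\times\)-equivariant way, and then descend to double cosets. First, fix an extension \(\tilde{\tau}: K \hookrightarrow \bb{C}\) of \(\tau\) (a CM type for \(K/F\)). Given \(f \in \Hom_{F,\tau}(K,B)\), base change along \(\tau\) yields \(f_\bb{R}: K \otimes_{F,\tau} \bb{R} \to B_\infty\); using \(\tilde{\tau}\) to identify \(K\otimes_{F,\tau}\bb{R}\simeq\bb{C}\), we view \(f_\bb{R}\) as an \(\bb{R}\)-algebra embedding \(\bb{C} \hookrightarrow B_\infty\), and Lemma \ref{Identificacion 2} associates to it a unique point \(z_f \in \bb{P}\). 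A coherent choice of CM type ensures that \(z_f\in \mcal{H}\) (the conjugate type would produce a point in the lower half-plane).

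Next, define
\[
\iota: X_m^{(K)} \longrightarrow X_m(\bb{C}),\qquad [(f,g)] \longmapsto [(z_f,g)].
\]
For well-definedness, observe that the right action of \(U_{0,1}(\frak{n}^+,p^m)\) only modifies \(g\), while the \(B^\times\)-equivariance in Lemma \ref{Identificacion 2}, combined with the fact that base change commutes with conjugation, yields the identity \(z_{bfb^{-1}} = b\cdot z_f\) for all \(b\in B^\times\). Hence \(\iota\) descends to double cosets.

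For injectivity, suppose \(\iota([(f,g)]) = \iota([(f',g')])\). Then there exist \(b\in B^\times\) and \(u\in U_{0,1}(\frak{n}^+,p^m)\) with \(b\cdot z_f = z_{f'}\) and \(bgu=g'\); the first equation together with the equivariance above gives \(z_{bfb^{-1}} = z_{f'}\). Since extension of scalars induces an injection \(\Hom_{F,\tau}(K,B) \hookrightarrow \Hom_\bb{R}(\bb{C},B_\infty)\) (two \(\bb{R}\)-algebra maps agreeing on \(K\) are equal, as \(K\) generates \(K\otimes_{F,\tau}\bb{R}\) over \(\bb{R}\)), and Lemma \ref{Identificacion 2} is a bijection, the assignment \(f\mapsto z_f\) is injective, so \(bfb^{-1}=f'\). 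Combined with \(bgu=g'\), this gives \([(f,g)]=[(f',g')]\) in \(X_m^{(K)}\).

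The only real subtlety will be the orientation issue: a priori \(f\mapsto z_f\) could land in either half-plane, so the construction requires a coherent convention on the CM type (or, equivalently, a tacit restriction to the subset of embeddings whose image lies in \(\mcal{H}\)). Once this bookkeeping is fixed, the rest is a formal consequence of Lemma \ref{Identificacion 2} and the fact that the right action of the level structure is insensitive to the archimedean component.
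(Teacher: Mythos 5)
Your proposal is correct and follows the same approach as the paper's one-line proof, which simply lifts $F$-algebra embeddings $K\hookrightarrow B$ to $\mathbb{R}$-algebra embeddings $\mathbb{C}\hookrightarrow B_\infty$ via $\tau$ and then invokes Lemma \ref{Identificacion 2}; you have merely supplied the well-definedness and injectivity details that the paper leaves implicit. Your observation about the orientation of $z_f$ (landing in $\mathcal{H}$ versus the lower half-plane) is a legitimate bookkeeping subtlety that the paper glosses over, and fixing a coherent CM type is the standard way to resolve it.
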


\begin{proof}
    Embeddings \(K\hookrightarrow B\) are lifted to embeddings \(\bb{C} \hookrightarrow B_\infty\) by extending scalars to \(\bb{R}\) via \(\tau\).
\end{proof}

There is a Hecke action via correspondences on the points of the curve. Denote this Hecke algebra, natural to the quaternionic setting, by 
\[\bb{B}_m,\] same as in the definite setting. See \cite[Section 2.1.3]{Fou12} for more details on how this action is defined.

\section{Jacquet--Langlands correspondence}

The Jacquet--Langlands correspondence is a deep result in the theory of automorphic forms and representations. It is a key tool in this construction: It builds the bridge between classical Hilbert modular forms (considered in Section \ref{Review of Hilbert modular forms and Hida theory}) and Gross curves or Shimura curves (considered in Section \ref{Gross curves and Shimura curves over totally real  fields}). While, for the first one, we allowed some primes dividing the level \(\frak{n}\) to be inert in \(K\) (weak Heegner hypothesis), in the second, the level \(\frak{n}^+\) always satisfies the Heegner hypothesis and we are able to build Heegner points.

\subsection{Picard group and Jacobian}
\label{Picard group and Jacobian}
This short subsection introduces the notation that will be common to both settings and used later in the applications.\\

For \( X_m \) denoting either a Gross curve or a Shimura curve, let  \(\t{Pic}(X_m)\) (respectively, \(\t{Pic}^0(X_m)\)) be the Picard group (respectively, the degree-zero Picard group) of \( X_m \), that is, the quotient of the group of divisors (respectively, divisors of degree \(0\)) by the subgroup of principal divisors. \\

In what follows, define
\[
  J_m := \t{Pic}(X_m) \otimes_{\bb{Z}} \mcal{O},
  \qquad
  J_m^0 := \t{Pic}^0(X_m) \otimes_{\bb{Z}} \mcal{O},
\]
where \(\mcal{O}\) denotes the ring of integers of the Hecke field of \( f \). \\

The degree map induces a short exact sequence
\[
  0 \longrightarrow J_m^0 
  \longrightarrow J_m 
  \overset{\t{deg}}{\longrightarrow} \mcal{O} 
  \longrightarrow 0.
\]

Let
\[
  J_m^{\t{ord}} := e_m^{\t{ord}} \cdot J_m,
  \qquad
  J_m^{0, \t{ord}} := e_m^{\t{ord}} \cdot J_m^0
\]
denote the ordinary parts, and define the inverse limits
\[
  J_\infty^{\t{ord}} := \varprojlim_m J_m^{\t{ord}},
  \qquad
  J_\infty^{0, \t{ord}} := \varprojlim_m J_m^{0, \t{ord}}.
\]

For a Hida family \(\mcal{R}\) and its twist \(\mcal{R}^\dagger\), denote also
\[
  \mathbf{J}_m := J_m^{\t{ord}} \otimes_{\bb{T}_\infty^{\t{ord}}} \mcal{R},
  \qquad
  \mathbf{J}_m^\dagger := \mathbf{J}_m \otimes_{\mcal{R}} \mcal{R}^\dagger,l
\]
and
\[
  \mathbf{J} := \varprojlim_m \bf{J}_m,
  \qquad
  \mathbf{J}^\dagger := \mathbf{J} \otimes_{\mcal{R}} \mcal{R}^\dagger.
\]

\subsection{The \(\frak{n}^-\)-new part}

Recall from Section \ref{Review of Hilbert modular forms and Hida theory} the \(\bb{C}\)-vector space of Hilbert cuspforms of parallel weight \(k\) and level \(\frak{n}\) denoted by \(S_k(\frak{n};\bb{C})\), and its corresponding Hecke algebra, denoted by \(\frak{h}_k(\frak{n};\bb{C})\). Define the \(\frak{n}^-\)-new quotient
\[S_k^{\frak{n}^-\t{-new}}(\frak{n};\bb{C})\]
to be the subspace of \(S_k(\frak{n};\bb{C})\) made of cuspforms that are new at all the primes dividing \(\frak{n}^-\), and denote
\[\frak{h}_k^{\frak{\frak{n}^-}\t{-new}}(\frak{n};\bb{C})\]
its corresponding Hecke algebra. 
In particular, for the spaces \(S_k(\frak{n}p^m)\) considered in Section \ref{Review of Hilbert modular forms and Hida theory}, we are interested in its \(\frak{n}^-\)-new Hecke algebra, denoted by \(\bb{T}_m\). When studying Hida theory, one considers the big ordinary version
\[\bb{T}_m^\t{ord} := e^\t{ord}\cdot \bb{T}_m, \hspace{1cm} \bb{T}^\t{ord}_\infty:=\varprojlim \bb{T}_m^\t{ord}.\]

From the properties of \(\frak{h}_\infty^\t{ord}\), one deduces the \(\Lambda\)-structure of \(\bb{T}_\infty^\t{ord}\) and the independence of the weight. There is a canonical projection
\[ \frak{h}_\infty^\t{ord} \longrightarrow \bb{T}_\infty^\t{ord}.\]

\subsection{Correspondence between Hecke algebras}
In this subsection, we apply the Jacquet--Langlands to our setting.\\

Recall from the last section the Hecke algebras \(\bb{B}_m\) which act on Shimura/Gross curves, denoted in the same way for both the definite and indefinite settings. The Jacquet--Langlands provides us with the following relation.
\begin{theorem}[Jacquet--Langlands correspondence]
    For every \(m\), there is an isomorphism of \(\mcal{O}\)-algebras
    \[\t{JL}_m: \bb{T}_m \overset{\sim}{\longrightarrow} \bb{B}_m.\]  
\end{theorem}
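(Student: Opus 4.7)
The plan is to deduce this Hecke-algebra identification from the classical Jacquet--Langlands correspondence at the level of automorphic representations, pass to the finite-level realization compatible with $U_{0,1}(\frak{n}^+,p^m)$, and then descend to $\mcal{O}$-coefficients.

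First I would recall the Jacquet--Langlands correspondence in its adelic form (over a totally real field, as treated for instance in Jacquet--Langlands and refined by Hida): there is a canonical bijection $\pi\leftrightarrow \pi^B$ between the set of irreducible cuspidal automorphic representations of $\t{GL}_2(\bb{A}_F)$ whose local component $\pi_v$ is discrete series for every $v\mid \frak{n}^-$ (and for every archimedean $v\neq\tau$ in the totally definite setting, resp.\ all archimedean $v$ in the indefinite setting), and irreducible automorphic representations of $B^\times(\bb{A}_F)$. This bijection preserves local components at every place $v\nmid \frak{n}^-$, and in particular matches the Satake parameters at all unramified primes.

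Next I would translate this to finite-level spaces. The $\frak{n}^-$-new subspace $S_k^{\frak{n}^-\text{-new}}(\frak{n}p^m;\bb{C})$ is precisely the space of cuspforms whose associated automorphic representations satisfy the JL discrete-series condition at $v\mid \frak{n}^-$, and it decomposes as a direct sum of $U_{0,1}(\frak{n}^+,p^m)$-invariant vectors in such $\pi$'s. On the quaternionic side, the space of functions / weight-$k$ sections on $X_m$ (functions on $\pi_0(X_m)\otimes\text{symbol}$ in the definite case; holomorphic forms of weight $k$ in the indefinite case) admits an analogous decomposition indexed by the $\pi^B$'s, with multiplicities computed by the dimensions of $U_{0,1}(\frak{n}^+,p^m)^B$-fixed vectors in $\pi^B_{\t{fin}}$. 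Since JL matches the local representations at every place $v\nmid\frak{n}^-$, and since the open compact subgroups are taken to correspond at those places, these multiplicities agree. This yields a Hecke-equivariant isomorphism over $\bb{C}$ between the two spaces, and dualizing (or taking the faithful image on each generalized eigenspace) produces an isomorphism of $\bb{C}$-Hecke algebras.

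Finally I would descend to $\mcal{O}$. Both $\bb{T}_m$ and $\bb{B}_m$ are, by construction, the $\mcal{O}$-subalgebras of $\End_{\bb{C}}$ of the relevant spaces generated by the Hecke operators $T_v$ (for $v\nmid\frak{n}p$), $U_\frak{p}$ (for $\frak{p}\mid p$), and the diamond operators. Under JL these generators have matching systems of eigenvalues on corresponding eigenforms; hence the $\mcal{O}$-subalgebras they generate inside $\End_{\bb{C}}$ are abstractly isomorphic via the map sending each Hecke symbol to its namesake, and this is well-defined and bijective because both sides are free $\mcal{O}$-modules of finite rank whose $\bb{C}$-linear extensions coincide under the established complex isomorphism. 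Taking $\t{JL}_m$ to be this map of generators yields the required isomorphism of $\mcal{O}$-algebras.

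The main obstacle is the careful matching of level structures, especially at primes $v\mid \frak{n}^-$, where on the $\t{GL}_2$-side one must take the $\frak{n}^-$-new condition (not the $U_0(\frak{n}^-)$-fixed subspace) to get the right multiplicity one statement, while on the quaternionic side one uses the maximal order in $B_v$ (which is automatic since $B_v$ is the division algebra at such $v$). Beyond this bookkeeping, the theorem is essentially a formal consequence of classical JL as described above, and I would either cite the finite-level version directly (e.g.\ \cite{Hid06}) or spell out the multiplicity-one argument to avoid invoking it as a black box.
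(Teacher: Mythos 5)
Your proposal is correct and is in essence a spelled-out version of exactly what the paper invokes: the paper's entire proof is the citation ``See [Section 2.3.6]{Hid06}'', which is the same reference you name at the end. Your sketch (classical adelic JL, matching of local fixed-vector multiplicities at $v\nmid\frak{n}^-$ and new-vectors at $v\mid\frak{n}^-$, then descent to the $\mcal{O}$-subalgebras generated by the named Hecke operators acting faithfully on corresponding free $\mcal{O}$-modules) is the standard argument underlying that citation and is consistent with it.
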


\begin{proof}
    See \cite[Section 2.3.6]{Hid06}.
\end{proof}

As it is done in \cite{LV10} in the rational case, we need to extend this result to fit with Hida theory. First, define a \(\Lambda\)-algebra structure on \(\bb{B}_m\) by letting the group element \([z] 
\) act by the Diamond operator \(\<{z}\). One can now have a notion of \(\bb{B}^\t{ord}\). Define the inverse limit \(\bb{B}^\t{ord}_\infty\) over all \(m\). 

\begin{proposition}
    The Jacquet-Langlands correspondence extends naturally to an isomorphism of \(\Lambda\)-algebras
\[\t{JL}_\infty^\t{ord}: \bb{T}_\infty^\t{ord} \overset{\sim}{\longrightarrow} \bb{B}_\infty^\t{ord}.\]
\end{proposition}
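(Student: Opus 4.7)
The plan is to upgrade the level-$m$ Jacquet--Langlands isomorphism $\mathrm{JL}_m\colon \mathbb{T}_m \xrightarrow{\sim} \mathbb{B}_m$ into a $\Lambda$-equivariant ordinary isomorphism, then pass to the inverse limit. The first step is to observe that both Hecke algebras are generated over $\mathcal{O}$ by the \emph{same} system of symbols: the operators $T_v$ for $v\nmid \mathfrak{n}p$, the operators $U_\mathfrak{p}$ for $\mathfrak{p}\mid p$, and the diamond operators $\langle z\rangle$ for $z$ ranging over the appropriate narrow class group. Since $\mathrm{JL}_m$ is an $\mathcal{O}$-algebra isomorphism, it matches these generators (this is how the correspondence is usually stated at the level of Hecke eigenvalues; on the quaternionic side the operators are defined by double cosets with exactly the adelic representatives used classically). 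In particular, $\mathrm{JL}_m$ intertwines the two $\Lambda$-structures, both of which are defined by $[z]\mapsto \langle\epsilon(z)\rangle$.

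Next, I would verify that $\mathrm{JL}_m$ sends the ordinary idempotent to the ordinary idempotent. Since $\mathbb{T}_m$ and $\mathbb{B}_m$ are finitely generated $\mathcal{O}$-modules (in fact finite $\mathcal{O}$-algebras), Hida's limit $e^{\mathrm{ord}}=\lim_{r\to\infty} U_p^{r!}$ converges in each and defines an idempotent; because $\mathrm{JL}_m$ is continuous and sends $U_p$ to $U_p$, it sends the two idempotents to each other. Restriction therefore yields a $\Lambda$-algebra isomorphism
\[
\mathrm{JL}_m^{\mathrm{ord}}\colon \mathbb{T}_m^{\mathrm{ord}} \xrightarrow{\ \sim\ } \mathbb{B}_m^{\mathrm{ord}}.
\]

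The final step is compatibility with the transition maps $\mathbb{T}_{m+1}^{\mathrm{ord}}\to\mathbb{T}_m^{\mathrm{ord}}$ and $\mathbb{B}_{m+1}^{\mathrm{ord}}\to\mathbb{B}_m^{\mathrm{ord}}$. On the classical side the projection is dual to the natural inclusion of cuspforms and is a map of Hecke algebras; on the quaternionic side it is induced by the map $\alpha_{m+1}\colon X_{m+1}\to X_m$ from Section~\ref{Gross curves and Shimura curves over totally real  fields}, which is equivariant for every Hecke correspondence. Since both $\mathrm{JL}_m^{\mathrm{ord}}$ and $\mathrm{JL}_{m+1}^{\mathrm{ord}}$ send Hecke generators to the same-named Hecke generators, the square
\[
\begin{tikzcd}
\mathbb{T}_{m+1}^{\mathrm{ord}} \arrow[r,"\mathrm{JL}_{m+1}^{\mathrm{ord}}"] \arrow[d] & \mathbb{B}_{m+1}^{\mathrm{ord}} \arrow[d] \\
\mathbb{T}_m^{\mathrm{ord}} \arrow[r,"\mathrm{JL}_m^{\mathrm{ord}}"] & \mathbb{B}_m^{\mathrm{ord}}
\end{tikzcd}
\]
commutes: both compositions are $\Lambda$-algebra maps agreeing on the generators. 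Taking the inverse limit produces the desired $\mathrm{JL}_\infty^{\mathrm{ord}}\colon \mathbb{T}_\infty^{\mathrm{ord}}\xrightarrow{\sim}\mathbb{B}_\infty^{\mathrm{ord}}$, and the $\Lambda$-algebra structure is preserved by construction.

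The main obstacle, such as it is, is bookkeeping: checking that the $U_p$ and diamond operators on the quaternionic Hecke algebras $\mathbb{B}_m$ (defined via Brandt matrices in the definite case and via correspondences on the Shimura curve in the indefinite case) are literally matched by $\mathrm{JL}_m$ to their classical counterparts on $\mathbb{T}_m$. This matching is implicit in the form of the Jacquet--Langlands correspondence used (an isomorphism of $\mathcal{O}$-algebras on the $\mathfrak{n}^-$-new quotients), but it must be verified once for $U_\mathfrak{p}$ and for $\langle z\rangle$ so that the ordinary projector and the $\Lambda$-structure really transport. Everything else is formal and parallels the rational case treated in \cite{LV10}.
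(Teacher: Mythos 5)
Your proof is correct and matches the strategy of \cite[Proposition 6.1]{LV10}, to which the paper simply defers: match the Hecke generators (including $U_p$ and the diamond operators) so that $\mathrm{JL}_m$ is automatically $\Lambda$-equivariant and transports Hida's idempotent, then check compatibility with the transition maps and pass to the inverse limit. The paper gives no independent argument, so there is nothing further to compare.
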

\begin{proof}
    See \cite[Proposition 6.1]{LV10}. 
\end{proof}

\subsection{Correspondence between Galois representations}
We restrict ourselves to the \mbox{indefinite} setting for the rest of the section. The goal is to compare, using Jacquet--Langlands, the Galois representation arising from towers of Hilbert modular varieties with the one arising from towers of Shimura curves.\\

Recall from Section \ref{Review of Hilbert modular forms and Hida theory} the big Galois representations \(\bf{T}\) and \(\bf{T}^\dagger\). Let us review its construction. Consider the Hilbert modular variety of level \(\mcal{U}_{0,1} (\frak{n},p^m)\) as in \cite[Section 4]{FJ24}
\[Y_m:= Y_{0,1}(\frak{n},p^m):= \t{GL}_2(F)\backslash (\mcal{H}^d\times \t{GL}_2(\hat{F}))/\mcal{U}_{0,1}(\frak{n},p^m).\]

Then \(\bf{T}\) is obtained from inverse limits of ordinary parts of middle-degree étale cohomology in the following way
\begin{align*}
    \t{Ta}_{p,m}^\t{ord} &:= e^\t{ord} \cdot H^d_\t{ét}(Y_{0,1}(\frak{n},p^m)_{\overline{F}} , \bb{Z}_p) \otimes_{\bb{Z}_p} \mcal{O},\\
    \t{Ta}_p^\t{ord} &:= \varprojlim_m \t{Ta}_{p,m}^\t{ord},\\
    \bf{T}&:= \t{Ta}_p^\t{ord}\otimes_{\frak{h}_\infty^\t{ord}}\mcal{R}.
\end{align*}
Recall the twisted version \(\bf{T}^\dagger:=\bf{T}\otimes_\mcal{R}\mcal{R}^\dagger\). In Section \ref{Review of Hilbert modular forms and Hida theory} we explained the properties of these representations. We want to relate it with the representation
\begin{align*}
    \underline{\t{Ta}}_{p,m}^\t{ord}&:= e^\t{ord}\cdot \t{Ta}_p(\t{Jac}(X_{0,1}(\frak{n},p^m)))\otimes_{\bb{Z}_p}\mcal{O},\\
    \underline{\t{Ta}}_p^\t{ord} &:= \varprojlim_m \underline{\t{Ta}}_{p,m}^\t{ord},\\ 
    \underline{\bf{T}}&:= \underline{\t{Ta}}_p^\t{ord}\otimes_{\bb{T}_\infty^\t{ord}}\mcal{R},\\
    \underline{\bf{T}}^\dagger&:=\underline{\bf{T}}\otimes_\mcal{R}\mcal{R}^\dagger.
\end{align*}
Note the difference, not only in the curve, but also in the use of the Hecke algebra \(\bb{T}^\t{ord}_\infty\) instead of \(\frak{h}_\infty^\t{ord}\).

\begin{proposition}\label{Comparacion representaciones}
    There are canonical \(G_F\)-equivariant \(\mcal{R}\)-linear isomorphisms
    \[\bf{T} \simeq \underline{\bf{T}}\hspace{5mm} \t{and} \hspace{5mm} \bf{T}^\dagger \simeq \underline{\bf{T}}^\dagger.\]
\end{proposition}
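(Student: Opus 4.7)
The plan is to work level by level, extract the \(\frak{n}^-\)-new ordinary component on the Hilbert-modular side, match it Galois-equivariantly with the Shimura-curve side via Jacquet--Langlands, and then pass to the inverse limit and specialize at the branch \(\mcal{R}\).

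As a first step, I would apply Kummer theory to rewrite \(\t{Ta}_p(\t{Jac}(X_m))\) as a Tate twist of \(H^1_\t{ét}(X_{m,\overline{F}}, \bb{Z}_p)\), so that both \(\t{Ta}_{p,m}^\t{ord}\) and \(\underline{\t{Ta}}_{p,m}^\t{ord}\) are realized as ordinary parts of étale cohomology of Shimura varieties defined over \(F\). The second, and crucial, step is to invoke the geometric Jacquet--Langlands transfer in its Galois-equivariant form, due in the relevant generality to Carayol and Blasius--Rogawski: after cutting out the \(\frak{n}^-\)-new part, the ordinary component \(e^\t{ord}\cdot H^d_\t{ét}(Y_{m,\overline{F}}, \bb{Z}_p)^{\frak{n}^-\t{-new}}\) and the ordinary component \(e^\t{ord}\cdot H^1_\t{ét}(X_{m,\overline{F}}, \bb{Z}_p)\) realize the same system of Hecke eigenvalues and carry the same underlying \(G_F\)-representation. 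Combined with the Hecke-algebra isomorphism \(\t{JL}_m\), this yields a \((G_F, \bb{T}^\t{ord}_m)\)-equivariant identification between the two Tate modules at finite level.

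The remainder of the argument is then formal. I would pass to the inverse limit over \(m\) using compatibility of the ordinary projectors and of \(\t{JL}_m\) with the transition maps \(\alpha_m\), obtaining a \((G_F, \bb{T}^\t{ord}_\infty)\)-equivariant isomorphism \(\t{Ta}_p^\t{ord} \simeq \underline{\t{Ta}}_p^\t{ord}\) (the first viewed as a module over \(\frak{h}^\t{ord}_\infty\) via the canonical projection \(\frak{h}^\t{ord}_\infty \twoheadrightarrow \bb{T}^\t{ord}_\infty\)). Tensoring with \(\mcal{R}\) then gives \(\bf{T}\simeq \underline{\bf{T}}\), because the branch \(\mcal{R}\), attached to an \(\frak{n}^-\)-new ordinary family, factors through this projection and is matched on both sides by \(\t{JL}^\t{ord}_\infty\). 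Tensoring further with \(\mcal{R}^\dag\) over \(\mcal{R}\) yields \(\bf{T}^\dag \simeq \underline{\bf{T}}^\dag\) without further work, since the critical character \(\Theta\) is constructed intrinsically from the diamond/Iwasawa action, which corresponds under \(\t{JL}^\t{ord}_\infty\).

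The main obstacle is the Galois-equivariant comparison at finite level: over \(\bb{Q}\) this is classical (and is the route taken in \cite{LV10}), but over a totally real field \(F\) with \(d>1\) the varieties \(Y_m\) and \(X_m\) have different dimensions, so the identification cannot be purely geometric. One must appeal instead to the characterisation of the Galois representation by its traces of Frobenius at unramified primes, and to the fact that both realisations produce representations with these same traces on the \(\frak{n}^-\)-new ordinary part; care must also be taken to track the Tate twist introduced by Kummer theory and to ensure integrality, which here is guaranteed by the ordinary idempotent.
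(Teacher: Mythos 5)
Your proposal identifies the right ingredients — Jacquet--Langlands to match Hecke algebras, comparison of Galois representations via Frobenius traces, and the observation that a purely geometric identification fails for \(d>1\) — but it is organized differently from the paper, and the organizational difference hides a genuine gap.

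The paper's argument is carried out at the level of \emph{arithmetic specializations}, not finite levels. After \(\t{JL}_\infty^\t{ord}\) identifies \(\bb{T}_\infty^\t{ord}\) with \(\bb{B}_\infty^\t{ord}\), for each arithmetic prime of \(\mcal{R}\) the two specializations are classical two-dimensional Galois representations over a finite extension of \(\bb{Q}_p\), and these are isomorphic because their Frobenius characteristic polynomials agree at almost all places (Chebotarev plus irreducibility). Since arithmetic primes are Zariski-dense in \(\mathrm{Spec}(\mcal{R})\) and the residual representation is irreducible, the big representations over \(\mcal{R}\) then coincide. Working specialization by specialization keeps the coefficients in a field, where semisimplicity and trace-determination are clean.

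Your proposal instead attempts to produce a \((G_F,\bb{T}_m^\t{ord})\)-equivariant isomorphism between \(e^\t{ord}\cdot H^d_\t{ét}(Y_{m,\overline{F}},\bb{Z}_p)^{\frak{n}^-\text{-new}}\) and \(e^\t{ord}\cdot H^1_\t{ét}(X_{m,\overline{F}},\bb{Z}_p)\) at each finite level \(m\), then take inverse limits and tensor with \(\mcal{R}\). This intermediate claim is considerably stronger than what the paper proves, and the fallback you yourself suggest — matching Frobenius traces — does not close it: with \(\bb{Z}_p\)-coefficients these are finitely generated modules over a big Hecke algebra rather than representations over a field, so trace-equality does not give an isomorphism of \(\bb{Z}_p[G_F]\)-modules (Brauer--Nesbitt applies to semisimple representations over a field). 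The geometric Jacquet--Langlands transfers of Carayol and Blasius--Rogawski that you invoke are likewise formulated with \(\bb{Q}_p\) or \(\overline{\bb{Q}}_p\) coefficients, and passing to an integral comparison is precisely what is delicate; the ordinary idempotent does not by itself grant this. To make your route work you would need an additional freeness or multiplicity-one input at finite level, whereas the paper avoids this entirely by postponing the comparison until after specialization. The final steps (inverse limit, tensoring with \(\mcal{R}\), then with \(\mcal{R}^\dagger\), using that the \(\frak{n}^-\)-new branch factors through \(\bb{T}_\infty^\t{ord}\) and that \(\Theta\) is built from the diamond action) are correct and match the paper.
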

Note that this also applies to the corresponding specializations.

\begin{proof}
    See \cite[Proposition 6.4]{LV10} for a similar discussion.  The idea is the following: Jacquet--Langlands identifies the ordinary Hecke algebras on both sides. For each arithmetic specialization, the Hilbert and quaternionic Galois representations have the same Frobenius characteristic polynomials at almost all places, hence are isomorphic. Since this holds on a Zariski-dense set and the residual representation is irreducible, their big representations coincide. 
\end{proof}

\section{Construction of big Heegner points}
\label{Construction of big Heegner points}

In this section, we demonstrate that Heegner points exist in the sense described above and that it is possible to construct a family meeting the necessary conditions for an Euler system. Later we build big Heegner points attached to Hida family of Hilbert modular forms.

\subsection{A compatible family of Heegner points}\label{Capítulo construcción}

We first fix a model of \(B/K\) in order to have a canonical injection \(\iota_K:K\hookrightarrow B\). See \cite{Wan22} or \cite{Hun17} for more details in the totally real setting.\\

 Fix a CM type, that is, a subset \(\Sigma\subset \t{Hom}(K,\bb{C})\) such that \(\Sigma \cup \overline{\Sigma} = \t{Hom}(K,\bb{C})\) and \(\Sigma \cap \overline{\Sigma} =\emptyset\). As in the last section, choose \(\theta\in K\) such that
\begin{enumerate}
    \item \(\t{Im}(\sigma(\theta))>0\), \(\forall \sigma \in \Sigma\).
    \item \(\{1,\theta_v\}\) is a basis of \(\mcal{O}_{K.v}\) over \(\mcal{O}_{F,v}\) for all primes \(v|d_{K/F}p\frak{n}\).
    \item \(\theta\) is a local uniformizer at primes \(v\) ramified in \(K\).
\end{enumerate}

We fix a basis \(B=K \oplus K  j\) satisfying
\begin{enumerate}
    \item \(j^2=\beta\in F^\times\) with \(\sigma(\beta)<0\) for all \(\sigma\in \Sigma\) and such that for all \(x\in K\) holds \(jx=\overline{x}j\).
    \item \(\beta \in (\mcal{O}^\times_{F,v})^2\) for all \(v|p\frak{n}^+\) and \(\beta\in\mcal{O}_{F,v}^\times\) for all \(v|d_{K/F}\).\\
\end{enumerate}

For \(v|p\frak{n}^+\), fix the isomorphisms \(i_v: B_v\overset{\sim}{\rightarrow} M_2(F_v)\) determined by
\[i_v(\theta) =
\begin{pmatrix} 
    T(\theta) & -N(\theta) \\ 
    1 & 0\\
\end{pmatrix}, \hspace{1cm}
i_v(j) = \sqrt{\beta}
\begin{pmatrix} 
    -1 & T(\theta) \\ 
    0 & 1\\
\end{pmatrix}.
\]

Fix \(\frak{c}\) coprime with \(\frak{n}^+\) and such that all \(v|\frak{c}\) are inert in \(K\). Let \(\frak{n}^+=\frak{N}^+\overline{\frak{N}^+}\) for the choice of CM type. For each \(m\geq 0\), consider the elements \(b_\frak{c}^{(m)} \in \hat{B}^\times\) whose local components \(b_\frak{c}^{(m)}=\prod_v b_v^{(m)}\) are defined by
\begin{itemize}
    \item If \(v\nmid p \frak{c}\frak{n}^+\), then:
    \[b_v^{(m)} := 1.\]
    \item If \(v|\frak{n}^+\), then:
    \[ b_v^{(m)} := \frac{1}{\theta-\overline{\theta}}\begin{pmatrix} 
        \theta & \overline{\theta} \\ 
        1 & 1\\
    \end{pmatrix} \in \t{GL}_2(K_w) = \t{GL}_2(F_v),\ v=w \overline{w}.\]

    \item If \(v|\frak{c}\), write \(\frak{c}=(\varpi_\frak{c})^{c_v}\) for \(\varpi_\frak{c}\) an uniformizer of \(\frak{c}\). Write also \(r_v=c_v-\t{ord}_v\frak{n}\). Then
    \[ b_v^{(m)} :=\begin{pmatrix} 
        \varpi_v^{-r_v} & 0 \\ 
        0 & 1\\
    \end{pmatrix}.\]

    \item If \(v|p\) and \(v\) splits in \(K\), then:
    \[ b_v^{(m)} := \begin{pmatrix} 
        \theta & -1 \\ 
        1 & 0\\ 
    \end{pmatrix}\begin{pmatrix} 
        \varpi_v^m & 0 \\ 
        0 & 1\\ 
    \end{pmatrix}\in\t{GL}_2(K_w) = \t{GL}_2(F_v),\ v=w \overline{w}.\]
    \item If \(v|p\) and \(v\) is inert in \(K\), then:
    \[ b_v^{(m)} := \begin{pmatrix} 
        0 & 1 \\ 
        -1 & 0\\ 
    \end{pmatrix}\begin{pmatrix} 
        \varpi_v^m & 0 \\ 
        0 & 1\\ 
    \end{pmatrix}.\]
\end{itemize}

\begin{proposition}
    \({P}_{\frak{c}p^n,m} := [(\iota_K, b_\frak{c}^{(m)})] \in X^{(K)}_m\) is a Heegner point of conductor \(\frak{c}p^{n+m}\).
\end{proposition}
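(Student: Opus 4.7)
The statement is a local assertion: both the optimality condition and the \(p\)-level condition in the definition of a Heegner point decompose into one equality per finite prime \(v\), since \(\mcal{O}_{\frak{c}p^{n+m}}\), the Eichler order \(R_m\), and the element \(b_\frak{c}^{(m)}\) are all described by their local components. I would therefore proceed prime by prime, checking that
\[
\iota_{K,v}\bigl(\mcal{O}_{\frak{c}p^{n+m},v}\bigr) \;=\; \bigl(b_v^{(m)}\, R_{m,v}\, (b_v^{(m)})^{-1}\bigr) \cap \iota_{K,v}(K_v)
\]
at every \(v\), together with the congruence condition at \(v\mid p\).

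\medskip

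For \(v\nmid p\frak{c}\frak{n}^+ d_{K/F}\) one has \(b_v^{(m)}=1\), \(R_{m,v}=\mathrm{M}_2(\mcal{O}_{F,v})\) and \(\mcal{O}_{\frak{c}p^{n+m},v}=\mcal{O}_{K,v}\), so optimality reduces to \(\iota_K(\mcal{O}_{K,v})=\mathrm{M}_2(\mcal{O}_{F,v})\cap \iota_K(K_v)\); this is immediate from condition (2) on \(\theta\), which provides an \(\mcal{O}_{F,v}\)-basis of \(\mcal{O}_{K,v}\). The ramified primes \(v\mid d_{K/F}\) are dispatched analogously, now using condition (3) (\(\theta\) is a local uniformizer at \(v\)).

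\medskip

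For \(v\mid\frak{n}^+\), the prime splits as \(v=w\bar{w}\), and \(b_v^{(m)}\) is precisely the change of basis that diagonalizes \(\iota_K\) over \(K_v\simeq F_v\oplus F_v\); explicitly, \((b_v^{(m)})^{-1}\iota_K(k)b_v^{(m)} = \mathrm{diag}(k_w,k_{\bar{w}})\), and the standard Eichler order of level \(\varpi_v\) intersects the diagonal subalgebra in \(\mathrm{diag}(\mcal{O}_{F,v},\mcal{O}_{F,v})\simeq\mcal{O}_{K,v}\), giving optimality. For \(v\mid\frak{c}\) (inert in \(K\)), the diagonal scaling \(b_v^{(m)}=\mathrm{diag}(\varpi_v^{-r_v},1)\) cuts \(\iota_K(\mcal{O}_{K,v})\) down to \(\iota_K(\mcal{O}_{F,v}+\varpi_v^{c_v}\mcal{O}_{K,v})\), producing the correct \(v\)-part of the conductor.

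\medskip

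The heart of the argument, and its main obstacle, lies at primes \(v\mid p\), where the split and inert sub-cases must be treated separately using the two explicit formulas for \(b_v^{(m)}\). In each case one must simultaneously verify that (i) conjugation by \(b_v^{(m)}\) of the level-\(\varpi_v^m\) part of \(R_{m,v}\) intersects \(\iota_K(K_v)\) in \(\iota_K(\mcal{O}_{F,v}+p^{n+m}\mcal{O}_{K,v})\), giving the correct \(p\)-part of the conductor, and (ii) the congruence defining \(U_{m,p}\) (upper-triangular mod \(p^m\) with \(1\) in the bottom-right corner) pulls back through conjugation by \((b_p^{(m)})^{-1}\) to exactly the condition \(\equiv 1 \pmod{p^m\mcal{O}_{K,p}}\) on the image of \(\mcal{O}_{\frak{c}p^{n+m},p}^\times\). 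Keeping the factor \(\varpi_v^m\) in \(b_v^{(m)}\) correctly tracked across both (i) and (ii) in each sub-case is the principal technical difficulty, and where the specific normalizations in conditions (1)--(2) on \(\theta\) and (2) on \(\beta\) (with \(\beta\in(\mcal{O}_{F,v}^\times)^2\) at \(v\mid p\)) are fully exploited.
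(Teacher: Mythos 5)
Your plan matches the paper's approach: the paper's ``proof'' is simply the phrase ``Direct computation'' together with a pointer to \cite[Theorem~1.6]{CL16} and \cite[Section~3]{Hun17}, and those references carry out exactly the prime-by-prime verification you describe, with the \(p\)-local cases (split and inert) being the substantive part. Your decomposition into cases, the diagonalization of \(\iota_K\) by \(b_v^{(m)}\) at split \(v\mid\frak{n}^+\), and the role of the scaling matrix at \(v\mid\frak{c}\) are all correct, and you have correctly pinpointed the \(p\)-local computation as the crux.

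Two small caveats. First, at generic primes \(v\nmid p\frak{c}\frak{n}^+d_{K/F}\) you invoke condition~(2) on \(\theta\), but that condition is only imposed for \(v\mid d_{K/F}p\frak{n}\); at the remaining primes one argues that \(\{1,\theta\}\) is a local \(\mcal{O}_{F,v}\)-basis of \(\mcal{O}_{K,v}\) for all but finitely many \(v\), and at the finitely many exceptions one absorbs the discrepancy into the choice of the (otherwise unconstrained) isomorphism \(i_v\) fixing \(R_{m,v}\), which does not change the double coset. Second, your plan as written stops short of the actual verification at \(v\mid p\), which is where the \(\varpi_v^m\)-scaling in \(b_v^{(m)}\) has to be matched against both the conductor \(p^{n+m}\) and the \(U_{m,p}\) congruence; you describe the check but do not perform it. Since that is precisely the content the proposition is asking for (and the content of \cite[Theorem~1.6]{CL16}), the proposal is a sound plan but not yet a complete proof.
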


\begin{proof}
    Direct computation. This is a combination of \cite[Theorem 1.6]{CL16} and \cite[Section 3]{Hun17}.
\end{proof}

\begin{remark}
    One can recover this explicit construction over \(\bb{Q}\) from that originally proposed in \cite[Section 4]{LV10} by choosing \(g=\iota_K\), which coincides locally at \(p\) with their \(\psi_p^{(c)}\). In this case, taking \(1\) as the representative of the identity in \(\hat{R}_m\backslash \hat{B}^\times/B^\times\), the elements \(a^{(c,m)}\) coincide with our \(b^{(m)}\). Note that in \cite{LV10}, the choice of the open-compact subgroup differs from the one considered here or in \cite{Fou12} and \cite{Wan22}, since they define it to act on the left.
\end{remark}

\begin{proposition}\label{Compatibilidad}
    The following identities hold for our particular choices \(P_{\frak{c},m}\):
    \begin{enumerate}
        \item Horizontal compatibility:
        \begin{align*}
            U_p(P_{\frak{c}p^{r-1},m}) &= \text{tr}_{H_{m+r}[m+r]/H_{m+r-1}[m+r]}(P_{\frak{c}p^r,m}),\\
            T_v(P_{\frak{c},m}) &= \text{tr}_{H_{\frak{c}vp^m}[m]/H_{\frak{c}p^m}[m]}(P_{\frak{c}v,m}), \t{ for }v\nmid \frak{n}p \t{ inert prime}.
        \end{align*}
        \item Vertical compatibility:
        \[U_p(P_{\frak{c},m-1}) = \tilde{\alpha}_{m} \LL(\text{tr}_{H_{m}[m]/H_{{m-1}}[m]}(P_{\frak{c},m}) \RR).\]
   \end{enumerate}
\end{proposition}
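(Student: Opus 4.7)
The plan is to reduce each of the three identities to Propositions~\ref{Propiedad 1} and~\ref{Propiedad 2}, which already establish, for \emph{any} Heegner point \(P\) of conductor \(\frak{c}p^n\) and \emph{any} summand \(Q\) of \(U_p(P)\) or \(T_v(P)\), the trace relations
\[
U_p(P) = \t{tr}_{H_{n+1}[n+1]/H_n[n+1]}(Q), \qquad T_v(P) = \t{tr}_{H_{\frak{c}vp^n}[n]/H_{\frak{c}p^n}[n]}(Q).
\]
Granted this, the content of the proposition reduces to exhibiting the \emph{specific} points constructed in this section---namely \(P_{\frak{c}p^r,m}\), \(P_{\frak{c}v,m}\) and \(\tilde{\alpha}_m(P_{\frak{c},m})\)---as summands of the corresponding Hecke operator applied to \(P_{\frak{c}p^{r-1},m}\), \(P_{\frak{c},m}\) and \(P_{\frak{c},m-1}\) respectively.

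For the horizontal \(U_p\)-identity I would expand \(U_p(P_{\frak{c}p^{r-1},m})\) as the sum of classes \([(\iota_K,\,b_{\frak{c}p^{r-1}}^{(m)}\lambda_{p,a})]\) with \(a\in \mcal{O}_F/p\mcal{O}_F\), and then perform the local matrix identification at each prime \(v\mid p\). Using the explicit expressions for \(b_v^{(m)}\) (in both the split and inert cases), the key computation is
\[
\begin{pmatrix} \varpi_v^{m+r-1} & 0 \\ 0 & 1 \end{pmatrix}\begin{pmatrix} \varpi_v & 0 \\ 0 & 1 \end{pmatrix} = \begin{pmatrix} \varpi_v^{m+r} & 0 \\ 0 & 1 \end{pmatrix},
\]
which, up to the fixed left factor appearing in the definition of \(b_v^{(m)}\), identifies the \(a=0\) summand of \(U_p(P_{\frak{c}p^{r-1},m})\) with \(P_{\frak{c}p^r,m}\). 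The \(T_v\)-identity for \(v\nmid \frak{nc}p\) inert is treated in exactly the same spirit: the decomposition of \(T_v\) contains the summands indexed by \(a\in k_v\) together with the \(\lambda_{v,\infty}\)-term, and a direct local computation at \(v\) (using the formula for \(b_v^{(m)}\) at an inert place of \(\frak{c}\)) exhibits \(P_{\frak{c}v,m}\) as one of them.

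For the vertical compatibility, note that \(\tilde{\alpha}_m\) is induced by the inclusion \(U_{0,1}(\frak{n}^+,p^m)\subset U_{0,1}(\frak{n}^+,p^{m-1})\) on open compact subgroups. Applying Proposition~\ref{Propiedad 1} to \(P_{\frak{c},m-1}\) (of conductor \(\frak{c}p^{m-1}\)) yields \(U_p(P_{\frak{c},m-1}) = \t{tr}_{H_m[m]/H_{m-1}[m]}(Q')\) for any summand \(Q'\), and the natural candidate is \(Q' = \tilde{\alpha}_m(P_{\frak{c},m})\). The same local matrix identity as above, now with the effect of \(\lambda_{p,0}\) converting \(b_v^{(m-1)}\) into \(b_v^{(m)}\) at each \(v\mid p\), shows that this \(Q'\) appears in the decomposition of \(U_p(P_{\frak{c},m-1})\) at level \(m-1\), which closes the argument.

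The main obstacle I anticipate is the careful local bookkeeping at the primes \(v\mid p\) (split versus inert) and at the primes dividing \(\frak{c}\), together with matching the number of distinct summands against the degree of the corresponding Galois extension. That last count, however, is already guaranteed by Dedekind's formula in the proofs of Propositions~\ref{Propiedad 1} and~\ref{Propiedad 2}, so no extra combinatorial work is required beyond producing one concrete summand of the right shape.
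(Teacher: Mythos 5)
Your proposal is correct and follows the same strategy the paper itself invokes: reduce each identity to Propositions~\ref{Propiedad 1} and~\ref{Propiedad 2} and then exhibit the right-hand point as an explicit summand of the Hecke operator on the left by a local computation with the elements \(b_v^{(m)}\) (the paper simply points to \cite{LV10} and \cite{Fou12} for precisely this calculation). The only thing worth making explicit in the vertical case is that \(\tilde{\alpha}_m\), being induced by the inclusion \(U_m\subset U_{m-1}\), commutes with the \(G_K\)-action, so that \(\mathrm{tr}_{H_m[m]/H_{m-1}[m]}(\tilde{\alpha}_m(P_{\frak{c},m}))=\tilde{\alpha}_m\bigl(\mathrm{tr}_{H_m[m]/H_{m-1}[m]}(P_{\frak{c},m})\bigr)\), which is what lets you pass from the form produced by Proposition~\ref{Propiedad 1} to the form stated in the proposition.
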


\begin{proof}
    The proof is derived from Propositions \ref{Propiedad 1} and \ref{Propiedad 2}. For more details, see \cite[Proposition 4.7]{LV10} and \cite[Proposition 4.8]{LV10}. Similarly, in the indefinite setting, check \cite[Proposition 4.5]{Fou12}. 
\end{proof}

The last property provides an explicit description of the action of the Galois group \(\Gal(\overline{F},H_{m})\) on these points. This verification is necessary because our points are rational over an extension of \(H_{m}\), rather than over \(H_{m}\) itself, as is the case for the classical CM points studied through Shimura's reciprocity law.

\begin{proposition}
For any \(\sigma\in \Gal(\overline{F}/H_{m})\) and \(m\geq1\), there is an equality
\[
    P_{\frak{c},m}^\sigma = \<{\vartheta(\sigma)} P_{\frak{c},m},
\]
where \(\vartheta(\sigma)\) is a square root of
\[G_F^{ab}\longrightarrow \Gal(F[\infty]/F) \longrightarrow 1+p\mcal{O}_{F,p}.\]
\end{proposition}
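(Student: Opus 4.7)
The plan is to unravel the Galois action and reduce the equality to an adelic calculation at each place. By the definition \([(f,g)]^\sigma = [(f, \hat{f}(x_\sigma)g)]\) together with Artin reciprocity, the hypothesis \(\sigma \in \Gal(\overline{F}/H_m)\) forces \(x_\sigma \in K^\times \hat{\mcal{O}}_m^\times\); after modifying by the global \(K^\times\) factor (which acts trivially on the double coset from the left via \(b^{-1} \cdot \iota_K(\alpha) \cdot b = \iota_K(\alpha)\) for \(\alpha \in K^\times\)), one may assume \(x_\sigma \in \hat{\mcal{O}}_m^\times\). The identity \(P_{\frak{c},m}^\sigma = \<{\vartheta(\sigma)} P_{\frak{c},m}\) then reformulates as the existence of \(u \in U_{0,1}(\frak{n}^+, p^m)\) such that
\[
b_\frak{c}^{(m),-1} \hat{\iota}_K(x_\sigma) b_\frak{c}^{(m)} = \vartheta(\sigma) \cdot u,
\]
where \(\vartheta(\sigma)\) is viewed as a central scalar idele acting via the diamond operator.

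Next, I would verify this identity place by place, using the explicit formulas for \(b_\frak{c}^{(m)}\). At \(v \nmid p\frak{c}\frak{n}^+\) one has \(b_v^{(m)} = 1\) and \(\iota_K(x_{\sigma,v}) \in R_v^\times = U_v\), so the local identity is trivial. At \(v \mid \frak{n}^+\) (split as \(v = w\bar{w}\)), the explicit \(b_v^{(m)}\) conjugates \(\iota_K(K_v)\) to the diagonal torus of \(\t{GL}_2(F_v)\), producing a diagonal matrix with unit entries that lies in \(U_0\) at \(v\) (no \(p\)-constraint since \(v \nmid p\)). At \(v \mid \frak{c}\), a direct matrix computation using the inertness of \(v\) in \(K\) and the form of \(b_v^{(m)}\) yields an element of \(U_0\). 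The crucial case is \(v \mid p\): at a split prime \(v = w\bar{w}\), conjugation by \(b_v^{(m)}\) sends \(\iota_K(x_{\sigma,v})\) essentially to \(\t{diag}(x_w, x_{\bar{w}})\), and since \(x_{\sigma,v} \in \mcal{O}_{\frak{c}p^m,v}^\times \subset (\mcal{O}_{F,v} + p^m \mcal{O}_{K,v})^\times\), the two components satisfy \(x_w \equiv x_{\bar{w}} \pmod{p^m}\); factoring out the central scalar \(x_{\bar{w}}\) leaves a matrix with bottom-right entry \(\equiv 1 \pmod{p^m}\), i.e.\ an element of \(U_{0,1}\) at \(v\). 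The inert case is analogous, after using the rotation matrix appearing in the corresponding \(b_v^{(m)}\).

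The main obstacle is the \(p\)-adic step. One has to verify that the local scalars extracted at places above \(p\) assemble into a well-defined idele \(\vartheta(\sigma) \in 1 + p\mcal{O}_{F,p}\) independent of the choice of lift \(x_\sigma\), and that squaring it matches the image of \(\sigma\) under the composition \(G_F^{ab} \to \Gal(F[\infty]/F) \to 1 + p\mcal{O}_{F,p}\) described by \(\epsilon\). The square-root relation arises from the inherent asymmetry of selecting \(x_{\bar{w}}\) rather than \(x_w\): the product \(x_w \cdot x_{\bar{w}} = N_{K/F}(x_{\sigma,v})\) corresponds under Artin reciprocity precisely to the image of \(\sigma|_{F[\infty]}\), forcing \(\vartheta(\sigma)^2 = \epsilon(\sigma|_{F[\infty]})\); since \(p > 2\) and the target group \(1 + p\mcal{O}_{F,p}\) is uniquely \(2\)-divisible, this pins down \(\vartheta(\sigma)\) up to sign. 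This is the totally real adelic extension of the analogous rational computation in \cite{LV10}.
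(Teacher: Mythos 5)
Your proposal takes essentially the same route as the paper: translate the Galois action into an adelic computation, quotient by the $K^\times$-factor, and extract a central scalar $\vartheta(\sigma)$ by conjugating $\hat{\iota}_K(x_\sigma)$ with $b_\frak{c}^{(m)}$ and checking membership in $U_{0,1}(\frak{n}^+,p^m)$ place by place.

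The paper streamlines the same argument in two ways that you do not. First, it invokes rationality of $P_{\frak{c},m}$ over $H_m[m]$ to restrict immediately to $\sigma\in\Gal(H_m[m]/H_m)$ and then chooses a representative $x$ with $x_p=\alpha+cp^m\beta$ and $x_v=1$ elsewhere, so that only the computation at $p$ remains and the other local verifications you carry out (at $v\mid\frak{n}^+$, $v\mid\frak{c}$, etc.) are absorbed into the choice of representative. Second, it uses the idelic norm map $\Gal(H_m[m]/H_m)\hookrightarrow\Gal(F[m]/F[0])$ and the homomorphism $\Gal(F[m]/F[0])\to(\mcal{O}_F/p^m\mcal{O}_F)^\times$ with trivially acting kernel to identify $\alpha^2\equiv N_{K/F}(x_p)\pmod{p^m}$ directly with the image of $\sigma$ in $\Gal(F[\infty]/F)$, from which $\alpha=\vartheta(\sigma)$ as a diamond operator follows. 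Your place-by-place matrix computation reconstructs the same facts and correctly locates the $x_w$ versus $x_{\bar{w}}$ asymmetry as the source of the square root. The step you flag as the ``main obstacle'' is indeed where your version is thinnest: the scalar $x_{\bar{w}}$ you factor out at each $v\mid p$ a priori lies in $\mcal{O}_{F,v}^\times$, not in $1+p\mcal{O}_{F,v}$, and assembling these into the claimed $\vartheta(\sigma)\in 1+p\mcal{O}_{F,p}$ requires absorbing the Teichm\"uller part into the kernel of the diamond action. The paper handles exactly this through the trivially acting kernel of $\Gal(F[m]/F[0])\to(\mcal{O}_F/p^m\mcal{O}_F)^\times$, so you should spell that step out rather than leave it as an acknowledged gap.
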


\begin{proof}
    This proof adapts \cite[Corollary~2.2.2]{How06} and \cite[Section~4.4]{LV10}. It suffices to show that such \(\sigma\) can be represented (and therefore completely determined) by \(\vartheta(\sigma)\).

    Since the points \(P_{\frak{c},m}\) are rational over \(H_m[m]\), we may restrict to \(\sigma\in \Gal(H_m[m]/H_m)\). Take \(x\in \widehat{K}^\times\) a representative of \(\sigma\), and multiply by an element acting trivially on the level structure so that
    \[
        x_p = \alpha + c p^m \beta,
    \]
    with \(\alpha\in \mcal{O}_{F,p}^\times\), \(\beta\in \mcal{O}_{K,p}\) and \(c\in\frak{c}\), 1 elsewhere.

    The map 
    \[
        \Gal(H_m[m]/H_m) \hookrightarrow \Gal(F[m]/F[0])
    \]
    is given idelically by the norm, which yields
    \[
        \t{Norm}_{K/F}(x_p)\equiv \alpha^2 \pmod{p^m\mcal{O}_F}.
    \]
    There is a natural morphism
    \[
        \Gal(F[m]/F[0]) \longrightarrow (\mcal{O}_F/p^m\mcal{O}_F)^\times
    \]
    whose kernel acts trivially on the relevant points. Altogether, this implies that the action of \(\sigma\in \Gal(H_m[m]/H_m)\) on the points is represented by \(\alpha\) modulo units, and therefore
    \[
        P_{\frak{c},m}^\sigma 
        = [(\iota_K,b_\frak{c}^{(m)})]^\sigma 
        = [(\iota_K,\widehat{\iota}_K(x)b_\frak{c}^{(m)})] 
        = [(\iota_K,\widehat{\iota}_K(\alpha)b_\frak{c}^{(m)})] 
        = \<{\alpha}P_{\frak{c},m} 
        = \<{\vartheta(\sigma)}P_{\frak{c},m}.\]
\end{proof}

\subsection{Big Heegner points}
For the rest of the section, we vary the points in Hida families by defining an action of Hida's big ordinary Hecke algebra. We define big Heegner points and extend the compatibility relations we proved into a more familiar Euler system relations.\\

For \(\frak{c}\) an ideal in \(\mcal{O}_F\), denote
\(\mcal{G}_\frak{c} = \Gal(K^\t{ab}/H_\frak{c})\)
and \(\mcal{G}_0 = \Gal(K^\t{ab}/K)\). Furthermore, for \(\frak{c}\) fixed as in previous sections, denote \(\mcal{G}_m:=\mcal{G}_{\frak{c}p^m}\) when there is no confusion. 

For each \(m\geq1\), consider the divisor group
\[D_m := \Div(X_m^{(K)}) = \bb{Z}[X_m^{(K)}],\]
for which, thanks to Jacquet--Langlans correspondence, there is a natural action of the Hecke algebra \(\bb{T}_m\) (given by Hecke correspondences) and of the Galois group \(\mcal{G}_0\) (extended additively). We enhance these modules by considering the ordinary part, the branch of the family, the twist by the critical character and the inverse limit for all the members of the tower. This is:
\begin{align*}
    D_m^{\t{ord}}&:= D_m \otimes_{\bb{T}_m} \bb{T}_m^{\t{}{ord}},\\
    D_\infty^{\t{ord}}&:= \varprojlim_{m} D_m^{\t{ord}},\\
    \bf{D}_m&:=  D_m^{\t{ord}} \otimes_{\bb{T}_\infty^\t{ord}} \mcal{R},\\
    \bf{D}_m^\dag&:=  \bf{D}_m \otimes_\mcal{R} \mcal{R}^\dag,\\
    \bf{D}&:=  D_\infty^{\t{ord}} \otimes_{\bb{T}_\infty^\t{ord}} \mcal{R},\\
    \bf{D}^\dag&:=  \bf{D}_\infty^{\t{ord}} \otimes_\mcal{R} \mcal{R}^\dag.
\end{align*}
For all of them there is a well defined action of \(\mcal{G}_0\).\\

Denote by \(\bf{P}_{\frak{c},m}\) the image in \(\textbf{D}_m^\dag\) of the Heegner point \(P_{\frak{c},m} \in X_m^{(K)}\).
\begin{proposition}
    \(\textbf{P}_{\frak{c},m} \in H^0(\mcal{G}_{m},\textbf{D}_m^\dag)\).
\end{proposition}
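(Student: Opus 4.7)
The plan is to verify that for every $\sigma \in \mcal{G}_m$, one has $\sigma \cdot \mathbf{P}_{\frak{c},m} = \mathbf{P}_{\frak{c},m}$ in $\mathbf{D}_m^\dag$. Since the Galois action on $X_m^{(K)}$ is defined via the reciprocity map $G_K \twoheadrightarrow \mcal{G}_0$, it restricts to the subgroup $\mcal{G}_m \subset \mcal{G}_0$, and the relation
\[
P_{\frak{c},m}^\sigma = \langle \vartheta(\sigma) \rangle P_{\frak{c},m}
\]
from the previous proposition propagates linearly through the chain $D_m \to D_m^{\t{ord}} \to \mathbf{D}_m$. Hence the task reduces to analysing how the critical twist modifies this formula.

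By construction of $\mathbf{D}_m^\dag = \mathbf{D}_m \otimes_\mcal{R} \mcal{R}^\dag$, the action of $\sigma$ on the image of $P_{\frak{c},m}$ is the original action twisted by $\Theta^{-1}(\sigma)$, so combining with the previous step yields
\[
\sigma \cdot \mathbf{P}_{\frak{c},m} = \Theta^{-1}(\sigma) \cdot \langle \vartheta(\sigma) \rangle \cdot \mathbf{P}_{\frak{c},m}.
\]
The remaining task is to identify the diamond operator $\langle \vartheta(\sigma) \rangle$, acting through the Hecke algebra, with the group-like element $[\vartheta(\sigma)] \in \Lambda \subset \mcal{R}$. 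By construction of the $\Lambda$-structure on $\frak{h}^{\t{ord}}$ recalled in Section \ref{Hida theory}, a group element $[g]$ acts as the diamond operator $\langle \epsilon(g) \rangle$. Since $\vartheta(\sigma)$ is a square root of $\epsilon(\sigma)$ in $1+p\mcal{O}_{F,p}$, and $\theta(\sigma)$ is by definition a square root of $[\epsilon(\sigma)]$ in $\mcal{R}^\times$ determining $\Theta$, the canonical choice gives $\Theta(\sigma) = [\vartheta(\sigma)]$. Consequently the scalar $\Theta^{-1}(\sigma) \cdot \langle \vartheta(\sigma) \rangle$ acts as the identity on $\mathbf{D}_m$, and $\mathbf{P}_{\frak{c},m}$ is fixed by $\sigma$.

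The main obstacle is precisely this final identification. While the correspondence between diamond operators and group-like elements is morally transparent, it passes through several layers (Hecke correspondences on $X_m^{(K)}$, Jacquet--Langlands, the $\Lambda$-structure on the big ordinary Hecke algebra, and the definition of $\Theta$), each carrying its own normalisation. Ensuring that all these conventions align so that $\Theta(\sigma)$ cancels $\langle \vartheta(\sigma) \rangle$ exactly, and not merely up to the $\pm 1$ ambiguity intrinsic to the square-root choice, is where the bulk of the bookkeeping should be placed.
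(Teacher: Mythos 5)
Your proof follows essentially the same route as the paper's: both reduce to the identity $\Theta(\sigma) = [\vartheta(\sigma)] = \langle \vartheta(\sigma) \rangle$ acting on $\mathbf{D}_m^\dag$, so that the critical twist exactly cancels the diamond operator coming from the Galois compatibility relation. The $\pm 1$ ambiguity you flag at the end is precisely what the paper's terse remark that ``the square root makes sense'' for $\sigma\in\mcal{G}_m$ is intended to address: for such $\sigma$ the element $\vartheta(\sigma)$ lies in the pro-$p$ group $1+p\mcal{O}_{F,p}$, where squaring is a bijection since $p>3$ is odd, so the square root is unique and no sign ambiguity can arise.
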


\begin{proof}
    The definition of the twist in \(\bf{D}_m^\dagger\) fits perfectly the Galois compatibility property satisfied by our Heegner points (Proposition \ref{Compatibilidad}). This gives us
    \[\Theta(\sigma) \bf{P}_{\frak{c},m} = [\vartheta(\sigma)]\bf{P}_{\frak{c},m} = \<{\vartheta(\sigma)} \bf{P}_{\frak{c},m}.\]
    First equality holds because we are taking \(\sigma\in \mcal{G}_m\) and the square root makes sense.
\end{proof}

Since \(\bf{P}_{\frak{c}p^n,m}\) is a Heegner point of conductor \(\frak{c}p^{m+n}\) (not \(\frak{c}p^{n}\)), we apply corestriction maps.

Let \(\mathfrak{s},\mathfrak{t}\subset\mathcal{O}_F\) be ideals and \(Q\in H^0(\mathcal{G}_{\mathfrak{st}},\mathbf{D}_m^\dagger)\).
Consider the \(\Theta\)-twisted corestriction map
\begin{align*}
\t{cor}_{H_{\frak{st}}/H_{\frak{s}}}:\;
H^0(\mcal{G}_{\frak{st}},\bf{D}_m^\dagger)&\longrightarrow
H^0(\mcal{G}_{\frak{s}},\bf{D}_m^\dagger)\\
Q &\longmapsto \sum_{\eta\in\Gal(H_{\frak{st}}/H_{\frak{s}})}
\Theta(\eta^{-1})\, Q^{\eta}
\end{align*}

Define the elements \(\mcal{P}_{\frak{c},m}\) as follows:
    \[\mcal{P}_{\frak{c},m} = \t{cor}_{H_{\frak{c}p^m}/H_\frak{c}}(\textbf{P}_{\frak{c},m}) \in H^0(\mcal{G}_\frak{c},\textbf{D}_m^\dagger).\]

The sequence \(\{\mcal{P}_{c,m}\}_m\) satisfy the vertical compatibility condition:
\begin{proposition}
    \(\tilde{\alpha}_m(\mcal{P}_{\frak{c},m}) = U_p (\mcal{P}_{\frak{c},m-1})\).
\end{proposition}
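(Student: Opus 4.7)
The plan is to chain the vertical compatibility of Proposition~\ref{Compatibilidad}(2) with the twisted corestriction defining $\mcal{P}_{\frak{c},m}$, exploiting the fact that both $U_p$ and $\tilde{\alpha}_m$ are $\mcal{R}$-linear and Galois-equivariant.

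Concretely, starting from
\[U_p(\mcal{P}_{\frak{c},m-1}) = U_p\bigl(\t{cor}_{H_{m-1}/H_\frak{c}}(\bf{P}_{\frak{c},m-1})\bigr),\]
I would first commute $U_p$ past the twisted corestriction (legitimate because $U_p$ comes from a Hecke correspondence defined over $F$, so it commutes with the twisted Galois action), and then apply Proposition~\ref{Compatibilidad}(2) lifted to $\bf{D}_m^\dagger$ to obtain
\[U_p(\mcal{P}_{\frak{c},m-1}) = \t{cor}_{H_{m-1}/H_\frak{c}}\Bigl(\tilde{\alpha}_m\,\t{tr}_{H_m[m]/H_{m-1}[m]}(\bf{P}_{\frak{c},m})\Bigr).\]
Since $\tilde{\alpha}_m$ is also Galois-equivariant, it commutes with the corestriction and can be factored out. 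Using the transitivity $\t{cor}_{H_m/H_\frak{c}} = \t{cor}_{H_{m-1}/H_\frak{c}}\circ\t{cor}_{H_m/H_{m-1}}$, the proposition then reduces to the key identification
\[\t{tr}_{H_m[m]/H_{m-1}[m]}(\bf{P}_{\frak{c},m}) = \t{cor}_{H_m/H_{m-1}}(\bf{P}_{\frak{c},m})\]
inside $\bf{D}_m^\dagger$, after which the chain closes and the right-hand side becomes $\tilde{\alpha}_m(\mcal{P}_{\frak{c},m})$.

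For this key identification, the plan is to use the isomorphism $\Gal(H_m[m]/H_{m-1}[m]) \simeq \Gal(H_m/H_{m-1})$ provided by Lemma~\ref{Linearly disjoint} to parameterise both sides by the same index set. Lifting each $\eta\in\Gal(H_m/H_{m-1})$ through the disjointness isomorphism to $\tilde{\eta}\in\Gal(H_m[m]/H_{m-1}[m])$, the $\Theta$-factors $\Theta(\tilde{\eta}^{-1})$ appearing in the twisted corestriction are compensated in $\bf{D}_m^\dagger$ by the residual ray-class action $\<{\vartheta(\tilde{\eta}^{-1})}\>$ on $P_{\frak{c},m}$, via the cancellation $\Theta=\<{\vartheta}\>$ built into the definition of the critical twist. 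This is exactly the mechanism that already makes $\bf{P}_{\frak{c},m}$ lie in $H^0(\mcal{G}_m,\bf{D}_m^\dagger)$.

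The main obstacle will be precisely this last identification: one must reconcile the untwisted Galois trace over the ray-class enlargement $\Gal(H_m[m]/H_{m-1}[m])$ with the $\Theta$-twisted corestriction over the ring-class quotient $\Gal(H_m/H_{m-1})$, and the reconciliation rests delicately on the interplay between Shimura reciprocity on Heegner points and the construction of the critical character $\Theta$. Once this is in place, everything else in the argument is a formal commutation of equivariant operators.
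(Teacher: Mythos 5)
Your chain of reductions is exactly the argument the paper defers to \cite[Proposition 7.2]{LV10}: commute $U_p$ past the twisted corestriction, insert the vertical compatibility of Proposition~\ref{Compatibilidad}(2), factor out $\tilde{\alpha}_m$, and reduce by transitivity to the identity $\t{tr}_{H_m[m]/H_{m-1}[m]}(\bf{P}_{\frak{c},m}) = \t{cor}_{H_m/H_{m-1}}(\bf{P}_{\frak{c},m})$ inside $\bf{D}_m^\dagger$, for which Lemma~\ref{Linearly disjoint} together with the $\Theta$-compensation is what is needed. One clarifying remark on the compensation that you correctly flag as the delicate step: for the lift $\tilde\eta\in\Gal(H_m[m]/H_{m-1}[m])$ trivial on $F[m]$, it is not that $\Theta(\tilde\eta)=1$ in $\mcal{R}^\times$ (that need not hold); rather, $\vartheta(\tilde\eta)\in 1+p^m\mcal{O}_{F,p}$, and group elements $[a]$ with $a\equiv 1 \pmod{p^m}$ act as the identity on $\bf{D}_m^\dagger$ because the $\Lambda$-action on the level-$m$ factor $D_m^{\t{ord}}$ factors through $\bb{T}_m^{\t{ord}}$, where such elements are trivial. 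This annihilation at level $m$ is indeed the phenomenon underlying $\bf{P}_{\frak{c},m}\in H^0(\mcal{G}_m,\bf{D}_m^\dagger)$, so your intuition about ``the same mechanism'' is right, but the resolution lives in the tensor-product structure of $\bf{D}_m^\dagger$ rather than in a literal identity of characters in $\mcal{R}^\times$.
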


\begin{proof}
    This is a consequence of Proposition \ref{Compatibilidad}. Check \cite[Proposition 7.2]{LV10} for more details. 
\end{proof}

This vertical compatibility relation of the points \(\{\mcal{P}_{\frak{c},m}\}_{\frak{c},m}\) lets us collapse them into the tower \(\bf{D}^\dag = \varprojlim_m \bf{D}_m^\dag\). Here it is crucial the ordinary invertibility of \(U_p\) on the Hida family.
\begin{definition}[Big Heegner point]
    We call \textbf{big Heegner point} of conductor \(\frak{c}\) to the element
    \[ \mcal{P}_\frak{c} := U_p^{-m}(\mcal{P}_{\frak{c},m}) \in H^0(\mcal{G}_\frak{c},\textbf{D}^\dagger).\]
\end{definition}

\begin{theorem} \label{Main theorem A}
    The following Euler system relations are satisfied, for each \(v\nmid \frak{nc}p^m\):
    \begin{align*}
        U_p(\mcal{P}_\frak{c}) &= \t{cor}_{H_{\frak{c}p}/H_\frak{c}}(\mcal{P}_{\frak{c}p})\\
        T_v(\mcal{P}_\frak{c}) &= \t{cor}_{H_{\frak{c}v}/H_\frak{c}}(\mcal{P}_{\frak{c}v})
    \end{align*}
\end{theorem}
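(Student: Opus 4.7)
The plan is to reduce both the \(U_p\)- and \(T_v\)-relations to finite-level identities in the twisted module \(\bf{D}_m^\dagger\), and then to derive these from the horizontal compatibilities of Proposition \ref{Compatibilidad}. Unwinding the definition \(\mcal{P}_\frak{c}[m]=U_p^{-m}(\mcal{P}_{\frak{c},m})\) componentwise, and using that corestriction commutes with the Hecke action (both \(U_p\) and \(T_v\) are \(\mcal{R}\)-linear and commute with the Galois action on \(\bf{D}_m^\dagger\)), the two desired identities become equivalent to the level-\(m\) assertions
\begin{align*}
U_p(\mcal{P}_{\frak{c},m}) &= \t{cor}_{H_{\frak{c}p}/H_\frak{c}}(\mcal{P}_{\frak{c}p,m}),\\
T_v(\mcal{P}_{\frak{c},m}) &= \t{cor}_{H_{\frak{c}v}/H_\frak{c}}(\mcal{P}_{\frak{c}v,m}),
\end{align*}
from which the theorem follows upon multiplying by \(U_p^{-m}\) (invertible on the ordinary part) and passing to the inverse limit \(\bf{D}^\dagger = \varprojlim_m \bf{D}_m^\dagger\).

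The core step is establishing the following two key identities in \(\bf{D}_m^\dagger\):
\begin{align*}
U_p(\textbf{P}_{\frak{c},m}) &= \t{cor}_{H_{\frak{c}p^{m+1}}/H_{\frak{c}p^m}}(\textbf{P}_{\frak{c}p,m}),\\
T_v(\textbf{P}_{\frak{c},m}) &= \t{cor}_{H_{\frak{c}vp^m}/H_{\frak{c}p^m}}(\textbf{P}_{\frak{c}v,m}).
\end{align*}
These are obtained by pushing the horizontal relations of Proposition \ref{Compatibilidad}, valid in \(\t{Div}(X_m^{(K)})\), through the canonical map \(\t{Div}(X_m^{(K)}) \to \bf{D}_m^\dagger\). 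The classical Galois trace over \(\Gal(H_{\frak{c}p^{m+1}}[m+1]/H_{\frak{c}p^m}[m+1])\) (respectively over \(\Gal(H_{\frak{c}vp^m}[m]/H_{\frak{c}p^m}[m])\)) must be matched with the \(\Theta\)-twisted corestriction along \(\Gal(H_{\frak{c}p^{m+1}}/H_{\frak{c}p^m})\) (respectively \(\Gal(H_{\frak{c}vp^m}/H_{\frak{c}p^m})\)). Lemma \ref{Linearly disjoint} furnishes the required identification of Galois groups in the \(U_p\)-case, and an analogous linear-disjointness argument handles the \(T_v\)-case since \(v\nmid p\). The scalars are arranged precisely by the defining property of the critical twist \(\Theta\), whose squared character is \(\epsilon\), so that the Galois compatibility \(P_{\frak{c},m}^\sigma = \langle \vartheta(\sigma)\rangle P_{\frak{c},m}\) (Section \ref{Sección propiedades}) is absorbed by the twist when descending from \(H_{\frak{c}p^m}[m]\) to \(H_{\frak{c}p^m}\).

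Granted the key identities, the rest is bookkeeping. Transitivity of corestriction,
\[
\t{cor}_{H_{\frak{c}p}/H_\frak{c}} \circ \t{cor}_{H_{\frak{c}p^{m+1}}/H_{\frak{c}p}} = \t{cor}_{H_{\frak{c}p^{m+1}}/H_\frak{c}} = \t{cor}_{H_{\frak{c}p^m}/H_\frak{c}} \circ \t{cor}_{H_{\frak{c}p^{m+1}}/H_{\frak{c}p^m}},
\]
together with Hecke-Galois equivariance, yields
\[
U_p(\mcal{P}_{\frak{c},m}) = \t{cor}_{H_{\frak{c}p^m}/H_\frak{c}}(U_p\textbf{P}_{\frak{c},m}) = \t{cor}_{H_{\frak{c}p^{m+1}}/H_\frak{c}}(\textbf{P}_{\frak{c}p,m}) = \t{cor}_{H_{\frak{c}p}/H_\frak{c}}(\mcal{P}_{\frak{c}p,m}),
\]
and the \(T_v\)-case is formally identical with \(\frak{c}p\) replaced by \(\frak{c}v\) throughout. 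The main obstacle lies in the middle paragraph: the careful verification that the untwisted Galois trace appearing in Proposition \ref{Compatibilidad} corresponds, under the map to \(\bf{D}_m^\dagger\), to the \(\Theta\)-twisted corestriction after applying Lemma \ref{Linearly disjoint}. This is parallel to the rational case treated in \cite[Section~7]{LV10}, but requires extra care because over \(F\) the inclusion \(\tilde{H}_m\subset H_{\frak{c}p^m}[m]\) of Proposition \ref{Prop. Inclusion} is not known to be an equality.
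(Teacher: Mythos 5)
Your proof follows essentially the same route as the paper's (which is a one-line reference to \cite[Proposition~7.2]{LV10}): reduce the relations in \(\mathbf{D}^\dagger\) to level-\(m\) identities, derive these from the horizontal compatibilities of Proposition~\ref{Compatibilidad}, and finish by transitivity of corestriction, using the vertical compatibility to pass to the limit. The reduction and bookkeeping are correct, and you have rightly isolated the delicate step.

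One precision is worth making explicit in your middle paragraph, since the ``absorption by the twist'' is not quite the descent from \(H_{\frak{c}p^m}[m]\) to \(H_{\frak{c}p^m}\) (that is the content of \(\mathbf{P}_{\frak{c},m}\in H^0(\mcal{G}_m,\mathbf{D}_m^\dagger)\), which makes \(\mcal{P}_{\frak{c},m}\) well-defined). What you actually need is: for the specific lift \(\tilde\eta\) of \(\eta\in\Gal(H_{\frak{c}p^{m+1}}/H_{\frak{c}p^m})\) furnished by Lemma~\ref{Linearly disjoint}, which fixes \(F[m+1]\), one has \(\vartheta(\tilde\eta)\in 1+p^{m+1}\mcal{O}_{F,p}\); since the diamond action on \(X_m^{(K)}\) factors through \(\t{Cl}_F^+(\frak{n}p^m)\), the element \(\Theta(\tilde\eta)=[\vartheta(\tilde\eta)]\) acts trivially on \(\mathbf{D}_m^\dagger\). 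This is precisely why the untwisted trace of Proposition~\ref{Compatibilidad} becomes, after pushing to \(\mathbf{D}_m^\dagger\), the \(\Theta\)-twisted corestriction over \(\Gal(H_{\frak{c}p^{m+1}}/H_{\frak{c}p^m})\). Finally, the possible strictness of \(\tilde H_m\subset H_{\frak{c}p^m}[m]\) (Proposition~\ref{Prop. Inclusion}) is not an obstruction at this stage: the correct counting of conjugates and the identification of Galois groups already took place in Propositions~\ref{Propiedad 1} and~\ref{Propiedad 2}, and Theorem~\ref{Main theorem A} only uses their output.
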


\begin{proof}
    Both are the translation to \(\bf{D}^\dagger\) of Proposition \cite[Proposition 7.2]{LV10}. 
\end{proof}

\section{Definite setting: LV two-variable \(p\)-adic \(L\)-function}
During this section, \(B/F\) is a totally definite quaternion algebra and \(X_m\) denotes a Gross curve. Fix a Hilbert modular form of weight \(k\) as in the introduction and recall the ring \(\mcal{R}\) attached to the Hida family going through it. 

\begin{remark}
    Let \(\omega\) denote the (common) root number attached to a Hida family. By a result of Greenberg, \(\omega\) agrees with the root number of all but finitely many arithmetic specializations. Throughout our applications to \(L\)-values, we assume \(\omega=1\); that is, the common root number of the family coincides with the root number of the Rankin--Selberg \(L\)-function of \(f\) (which equals 1 since we work in the definite setting).
\end{remark}

Recall from Section \ref{Picard group and Jacobian} the module \(\bf{J}\). The following multiplicity one hypothesis, used in \cite{LV10} and later \cite{CL16}, is imposed for the rest of the work.
\begin{assumption}\label{Multiplicity one}
    Assume \(\bf{J}/\frak{m} \bf{J}\) has \(\mcal{R}/\frak{m}\)-dimension 1, where \(\mcal{M}\) is the maximal ideal of \(\mcal{R}\). 
\end{assumption}

\begin{proposition}\label{Isomorphism R and J}
    There is an isomorphism of \(\mcal{R}\)-modules between \(\mcal{R}\) and \(\bf{J}\).
\end{proposition}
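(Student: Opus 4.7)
The plan is to prove the proposition by a Nakayama-type argument, using Assumption \ref{Multiplicity one} to obtain a surjection \(\mathcal{R}\twoheadrightarrow\bf{J}\), and then promoting it to an isomorphism by a generic rank computation.

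First, I would check that \(\bf{J}\) is a finitely generated \(\mcal{R}\)-module. Each \(J_m^{\t{ord}}\) is finitely generated over the ordinary quaternionic Hecke algebra \(\bb{B}_m^{\t{ord}}\), and hence over \(\bb{T}_m^{\t{ord}}\) via Jacquet--Langlands; tensoring down along the branch \(\bb{T}_\infty^{\t{ord}}\to\mcal{R}\) and passing to the inverse limit preserves finite generation because \(\mcal{R}\) is a finite (and in particular Noetherian and compact) \(\Lambda\)-algebra and each layer has bounded \(\mcal{R}\)-length. Since \(\mcal{R}\) is a complete local Noetherian ring with maximal ideal \(\frak{m}\), the topological version of Nakayama's lemma applies.

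Next, I would use Assumption \ref{Multiplicity one} to pick an element \(x\in\bf{J}\) whose image in \(\bf{J}/\frak{m}\bf{J}\) generates it as an \(\mcal{R}/\frak{m}\)-vector space. Nakayama then ensures \(x\) generates \(\bf{J}\) as an \(\mcal{R}\)-module, producing a surjection
\[
\varphi:\mcal{R}\longtwoheadrightarrow\bf{J},\qquad 1\mapsto x.
\]
It remains to show \(\ker(\varphi)=0\). Since \(\mcal{R}\) is a domain (it is the quotient of \(\frak{h}^{\t{ord}}\) by the minimal prime cut out by the Hida family \(\mathscr{G}\)), it suffices to prove that \(\bf{J}\otimes_{\mcal{R}}\t{Frac}(\mcal{R})\) is nonzero, equivalently that \(\bf{J}\) has generic rank \(\geq 1\).

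For this, I would argue at the level of arithmetic specializations. For each arithmetic prime \(\frak{p}\) of \(\mcal{R}\) corresponding to a classical \(p\)-stabilized ordinary Hilbert eigenform \(f_\frak{p}\) in the Hida family, the specialization \(\bf{J}\otimes_{\mcal{R},\nu_\frak{p}}\overline{\bb{Q}}_p\) recovers the \(f_\frak{p}\)-isotypic component inside \(\varprojlim_m J_m^{\t{ord}}\otimes\overline{\bb{Q}}_p\). By Jacquet--Langlands (applied in the form of the isomorphism \(\t{JL}_\infty^{\t{ord}}\)) and the multiplicity one theorem for automorphic forms on the totally definite quaternion algebra \(B\), this isotypic component is one-dimensional. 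Hence infinitely many (in fact all) arithmetic specializations of \(\bf{J}\) are nonzero, which forces \(\bf{J}\otimes_{\mcal{R}}\t{Frac}(\mcal{R})\neq 0\) and therefore \(\ker(\varphi)=0\).

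The main obstacle I expect is the bookkeeping between finite-level multiplicity one and its incarnation at the level of the inverse limit \(\bf{J}\): one needs control over how the modules \(J_m^{\t{ord}}\otimes_{\bb{T}_\infty^{\t{ord}}}\mcal{R}\) fit together as \(m\) varies, in order to see that passing to the inverse limit does not kill the generic rank. This is essentially a control-theorem style statement, and closely follows the argument of \cite[Proposition 7.8]{LV10}, with the added care required by working over the totally real field \(F\) and by the use of the weaker (adelic) form of multiplicity one in this setting.
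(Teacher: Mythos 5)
Your proof has the same overall architecture as the paper's (which follows \cite[Section 9.1]{LV10}): establish finite generation of \(\bf{J}\) over \(\mcal{R}\), apply Nakayama together with Assumption \ref{Multiplicity one} to get a surjection \(\mcal{R}\twoheadrightarrow\bf{J}\), and then argue injectivity. The surjectivity step is correct, and your injectivity step — nonvanishing of arithmetic specializations via Jacquet--Langlands and automorphic multiplicity one for \(B^\times\), plus the observation that \(\mcal{R}\) being a domain reduces the problem to generic rank \(\geq 1\) — makes explicit what the paper compresses into the one-line claim that ``rank one follows from Assumption \ref{Multiplicity one} and Nakayama's lemma''. You are also right to flag that this step needs a control theorem to compare specializations of the inverse limit \(\bf{J}\) with finite-level isotypic components.

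The real gap is in the finite generation step. Your claim that passing to the inverse limit preserves finite generation ``because \(\mcal{R}\) is a finite ... \(\Lambda\)-algebra and each layer has bounded \(\mcal{R}\)-length'' is not a valid argument: inverse limits of finitely generated modules over a Noetherian local ring need not be finitely generated, and the assertion that the layers \(J_m^{\t{ord}}\otimes_{\bb{T}_\infty^{\t{ord}}}\mcal{R}\) have \emph{bounded} \(\mcal{R}\)-length is essentially equivalent to the control-type statement one is trying to prove, not a consequence of general module theory. The paper's route is different and more robust here: following Hida (\cite[Section~1.9]{Hid94}), one compares \(J_m\) with the singular homology \(H_0(X_m(\bb{C}),\mcal{O}_f)\) at each level to show that \(J_\infty^{\t{ord}}\) is finitely generated as a \(\bb{T}_\infty^{\t{ord}}\)-module, from which finite generation of \(\bf{J}\) over \(\mcal{R}\) follows. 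An alternative closer to your stated strategy would be to invoke topological Nakayama directly, noting that \(\bf{J}\) is a compact \(\mcal{R}\)-module and that Assumption \ref{Multiplicity one} already gives finite-dimensionality of \(\bf{J}/\frak{m}\bf{J}\); but that is not the argument you wrote, and you would still need to justify compactness of \(\bf{J}\) as an \(\mcal{R}\)-module.
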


\begin{proof}
    The proof over \(\bb{Q}\) can be found in \cite[Section 9.1]{LV10}. It is valid, word by word, in the totally real setting. We sketch the main steps of the proof. First, one sees that \(J_\infty^\t{ord}\) is a finitely generated \(\bb{T}_\infty^\t{ord}\)-module. This follows from the work of Hida (\cite[Section 1.9]{Hid94}), by comparing \(H_0(X_m(\bb{C}),\mcal{O}_f)\) (singular homology) and \(J_m\) for each \(m\). This implies that \(\bf{J}\) is a finitely generated \(\mcal{R}\)-module. The fact that it has rank one follows from Assumption \ref{Multiplicity one} and Nakayama's lemma. 
\end{proof}

For each conductor \(\frak{c}\), consider the map
\[\eta_\frak{c}: H^0(\Gal(K^{\t{ab}}/H_\frak{c}), \bf{D}^\dagger) \longrightarrow \bf{D} \longtwoheadrightarrow  \bf{J} \overset{\sim}{\longrightarrow} \mcal{R},\]
where the last map is a fixed isomorphism provided by Proposition \ref{Isomorphism R and J}. This morphism sends our big Heegner points to special points of \(\mcal{R}\), which contain relevant arithmetic information about the Hida family of \(f\).

\subsection{Two-variable \(p\)-adic \(L\)-function}
We construct a theta element in the sense of Bertolini--Darmon varying in Hida families. Using it, we define a \(p\)-adic L-function which is conjectured to interpolate special values of \(L\)-functions of the different specializations, including higher weight.\\

Let \(K_\infty/K\) be the \(p\)-anticyclotomic \(\bb{Z}_p\)-extension of \(K\). This is, the unique subfield \(K_\infty\) of 
\[H_\infty:= \varprojlim_n H_{p^n}\] 
satisfying, for \(d=[F:\bb{Q}]\),
\[\Gal(K_\infty/K) \simeq \bb{Z}_p^d.\]

Denote the intermediate layers \(K_n\) and the Galois groups \(G_n := \Gal(K_n/K)\). Define \(G_\infty:= \Gal(K_\infty/K)\). Denote, for the domain \(\mcal{R}\) attached to the Hida family, the group algebra
\[ \mcal{R}_\infty := \varprojlim_n \mcal{R}[G_n] = \mcal{R}[[G_\infty]].\] 
Consider, for \(n\in\bb{N}\), the integer
\[d(n):= \min\{t\in\bb{N}: K_n\subset H_{p^t}\}.\]
For \(\frak{c}=(1)\), recall the Heegner points \(\{\mcal{P}_{p^n}\}_{n\geq 1}\) of Section \ref{Construction of big Heegner points}. Since we are interested in the \(p\)-anticyclotomic \(\bb{Z}_p\)-extension, we consider the divisors
\[ \mcal{Q}_n:=\t{cor}_{H_{p^{d(n)}}/K_n}(\mcal{P}_{p^{d(n)}})\in H^0(\Gal(K^\t{ab}/K_n),\bf{D}^\dagger). \]

The construction of a \emph{big theta element} (theta elements suitable for Hida families) was first proposed in the rational setting by Longo--Vigni (\cite[Section 9.3]{LV10}). We extend this construction for families of Hilbert modular forms of parallel weight.\\

For each \(n\geq 1 \), define
\[ \theta_n:= U_p^{-n} \LL( \sum_{\sigma\in G_n} \eta_{K_n}(\mcal{Q}_n^\sigma)\otimes\sigma^{-1} \RR) \in \mcal{R}[G_n] \]
and consider 
\[\theta_\infty:= \varprojlim_n \theta_n \in \mcal{R}_\infty,\]
where the inverse limit is taken with respect to the natural morphisms \(G_m\rightarrow G_n\), \(m>n\), and their respective extensions \(\mcal{R}[G_m]\rightarrow \mcal{R}[G_n]\). The elements \(\theta_n\) are compatible under the corestriction maps, a consequence of the norm-compatibility of the big Heegner points \(\{\mcal{P}_\frak{c}\}_\frak{c}\).
 
\begin{definition}
    The \textbf{Longo--Vigni two-variable \(p\)-adic \(L\)-function} attached to the Hida family passing through \(f\) is
    \[\mcal{L}_p(f/K) := \theta_\infty \cdot \theta_\infty^* \in \mcal{R}_\infty,\]
    where \(*\) is the canonical involution of \(\mcal{R}_\infty\) induced by \(\sigma\mapsto\sigma^{-1}\) on group-like elements and acting trivially on \(\mcal{R}\).
\end{definition}

Choose \(\chi: G_\infty \rightarrow \mcal{O}^\times\) to be a finite character, hence factoring through some \(G_n\). Define
\[\mcal{L}_p(f/K;\chi) := \chi\LL(\mcal{L}_p(f/K)\RR) \in \mcal{R}.\]

One can use specialization maps of Hida theory for arithmetic primes \(\frak{p}\in \t{Spec}(\mcal{R})\) (that is, the kernel of an arithmetic map) using
\[\nu_\frak{p} : \mcal{R} \longrightarrow \mcal{R}_\frak{p}/\frak{p} \]
in order to define
\[\mcal{L}_p(f/K;\chi,\frak{p}):= \nu_{\frak{p}}(\mcal{L}_p(f/K,\chi)) \in \overline{\bb{Q}}_p.\]

The weight 2 specializations are the natural extension to the totally real setting of Bertolini--Darmon theta elements for the \(p\)-anticyclotomic \(\bb{Z}_p\)-extension. On the other hand, the higher weight specializations are conjectures to be strictly connected to the higher theta elements by Chida--Hsieh in \cite{CH16}.\\

The function \(\mcal{L}_p(f/K)\) is conjectured to interpolate the algebraic parts of the central \(L\)-values
\(L(f_\frak{p}^\dagger / K, \chi, k_\frak{p}/2)\) as \(\frak{p}\) varies. The following weaker conjecture, extending \cite[Conjecture 9.14]{LV10}, summarizes these ideas. 
\begin{conjecture}
    \(\LL. L(f_\frak{p}^\dagger/K,\chi,s) \RR|_{s=k_\frak{p}/2} \neq 0 \Longleftrightarrow \mcal{L}_p(f/K;\chi,\frak{p})\neq 0\).
\end{conjecture}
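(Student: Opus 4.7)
The plan is to replace the conjectural equivalence with an explicit interpolation formula: for every arithmetic prime $\frak{p}$ of $\mcal{R}$ and every finite-order character $\chi$ of $G_\infty$, there should exist an identity
\[\mcal{L}_p(f/K;\chi,\frak{p}) \;=\; C(\chi,\frak{p}) \cdot \frac{L(f_\frak{p}^\dagger/K,\chi,k_\frak{p}/2)}{\Omega_\frak{p}},\]
where $\Omega_\frak{p}$ is a Petersson-type period of $f_\frak{p}$ and $C(\chi,\frak{p})$ is an explicit product of local factors at the primes dividing $\frak{n}p\, d_{K/F}$ and the archimedean places. Once such a formula is established and every term on the right-hand side other than the central $L$-value is shown to be non-zero at $\frak{p}$, the equivalence follows at once.

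First I would unwind the specialization $\nu_\frak{p}(\chi(\theta_\infty))$. By Proposition \ref{Isomorphism R and J}, after reducing mod $\frak{p}$ the morphism $\eta_{K_n}$ amounts to evaluating the Gross-curve divisor $\mcal{Q}_n^\sigma$ against the Jacquet--Langlands transfer $g_\frak{p}$ of $f_\frak{p}^\dagger$ to the totally definite quaternion algebra $B$; by Theorem \ref{Main theorem A} and the Galois compatibility of the family $\{\mcal{P}_\frak{c}\}_\frak{c}$, the Galois orbit $\{\mcal{Q}_n^\sigma\}_\sigma$ enumerates the CM points of conductor $p^{d(n)}$ corestricted to $K_n$. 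The $\chi$-twisted sum is then, up to the invertible factor $U_p^{-n}$, precisely the Bertolini--Darmon theta element attached to the pair $(g_\frak{p},\chi)$. Multiplying by $\theta_\infty^*$ turns $\nu_\frak{p}(\mcal{L}_p(f/K;\chi))$ into the square of a toric period attached to $(g_\frak{p},\chi)$ on $B^\times\backslash \hat{B}^\times$.

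The next step is to invoke the Waldspurger--Gross formula in its totally real quaternionic form, which expresses this squared toric period as an explicit multiple of $L(f_\frak{p}^\dagger/K,\chi,k_\frak{p}/2)/\Omega_\frak{p}$ with local factors that one computes place by place. In parallel weight $2$ this is the totally real generalization of \cite{BD96}; in higher parallel weight it is the counterpart of the higher-weight theta elements of \cite{CH16}, whose $p$-adic variation was carried out over $\bb{Q}$ in \cite{CL16}. Combining this with the previous paragraph yields the displayed interpolation formula.

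The hard part will be verifying that $C(\chi,\frak{p})$ and $\Omega_\frak{p}$ are non-zero uniformly in the Hida variable $\frak{p}$. Non-vanishing of $\Omega_\frak{p}$ follows from $p$-ordinarity of $f_\frak{p}$ together with positivity of the Petersson norm, and the prefactor $U_p^{-n}$ is harmless since ordinarity makes $U_p$ act invertibly on $\bf{J}$. At primes $v\mid \frak{n}^-$ the non-vanishing of the local factor reduces to the explicit test-vector computations of Gross--Prasad and of File--Martin--Pitale in the ramified quaternionic setting; at $v\mid \frak{n}^+$ it is standard under the weak Heegner hypothesis; at $v\mid p$ it is controlled by the $U_p$-eigenvalue and the conductor of $\chi$. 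Assembling these local non-vanishing statements uniformly across arithmetic specializations, and in particular verifying that no spurious cancellation occurs as $\frak{p}$ varies in an irreducible component of $\t{Spec}(\mcal{R})$, is the technical heart of the argument.
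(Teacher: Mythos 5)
This statement is a \emph{conjecture} in the paper, not a theorem; the paper offers no proof of it. The text immediately following the conjecture explicitly defers the matter, saying only that the weight-two specializations should be the totally real analogue of \cite{BD96}, the higher-weight ones should be compared with \cite{CH16}, and that a proof together with a precise interpolation formula is known over $\bb{Q}$ by \cite{CL16}. There is therefore no ``paper's own proof'' for your proposal to be measured against, and your text should not be labeled a proof.

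That said, your outline is a reasonable roadmap of how one would expect such a proof to go, and it is broadly consistent with the references the paper points to: unwind $\nu_\frak{p}(\chi(\theta_\infty))$ via the isomorphism of Proposition \ref{Isomorphism R and J} into a Bertolini--Darmon-type theta element on the totally definite quaternion algebra, recognize $\theta_\infty\cdot\theta_\infty^*$ specialized at $(\chi,\frak{p})$ as a squared toric period, invoke a Waldspurger/Gross-type formula over $F$ to express this as an explicit multiple of $L(f_\frak{p}^\dagger/K,\chi,k_\frak{p}/2)$, and then check nonvanishing of periods and local constants. This mirrors the rational-case strategy of \cite{CL16}. But you have not produced a proof: you yourself flag the genuinely hard step---showing the local factors $C(\chi,\frak{p})$ and periods $\Omega_\frak{p}$ are simultaneously nonzero as $\frak{p}$ varies, and that the explicit totally real quaternionic Waldspurger--Gross formula applies with the needed uniformity across arithmetic specializations---and leave it open. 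Until that is carried out (in particular verifying the specialization-by-specialization comparison of your $\theta_n$ with higher-weight theta elements over $F$, which is not yet in the literature in the generality needed), the statement remains conjectural, exactly as the paper presents it. Also note that the test-vector references you cite (Gross--Prasad, File--Martin--Pitale) are not in the paper's bibliography and would need to be sourced and adapted to this setting before they can be used.
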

Here \(L(f_\frak{p}^\dagger/K,\chi,s)\) denotes the Rankin--Selberg L-function of the character \(\chi\) and the weight \(k_\frak{p}\) \(\frak{p}\)-specialization of the family, twisted by the critical character \(\theta_\frak{p}^{-1}\).\\

For a proof over \(\bb{Q}\) and a precise formula relating the two values for general weights, see \cite{CL16}.  

\subsection{Anticyclotomic Iwasawa main conjecture in the totally definite setting}
\label{Anticyclotomic Iwasawa main conjecture in the totally definite setting}

In the previous subsection we discussed the analytic side of the theory. We now review the algebraic counterpart and formulate an Iwasawa-type conjecture in the totally definite setting, relating the two. Our approach closely follows \cite[Section~9.3]{LV10}, and the conjecture below extends their Conjecture~9.12.

As in \cite{LV10}, the following assumption is essential.

\begin{assumption}
    The ring \(\mcal{R}\) is regular.
\end{assumption}

For an \(\mcal{R}_\infty\)-module \(M\), we adopt the following notation:
\begin{itemize}
    \item \(M^\vee := \operatorname{Hom}_{\bb{Z}_p}(M,\bb{Q}_p/\bb{Z}_p)\);
    \item \(M_{\mathrm{tors}}\) denotes the torsion submodule of \(M\);
    \item \(\operatorname{Char}_{\mcal{R}_\infty}(M)\) is the characteristic ideal of \(M\), defined by
    \[
        \operatorname{Char}_{\mcal{R}_\infty}(M) :=
        \begin{cases}
            \displaystyle\prod_{\operatorname{ht}(\frak{q})=1} \frak{q}^{\operatorname{length}(M_\frak{q})}, & \text{if } M = M_{\mathrm{tors}}, \\[1.2ex]
            0, & \text{otherwise}.
        \end{cases}
    \]
\end{itemize}

With this notation, consider the \(\mcal{R}_\infty\)-module
\[
    \tilde{H}^1_{f,\mathrm{Iw}}(K_\infty, \mathbf{A}^\dagger)
    := \varinjlim_n \tilde{H}^1_f(K_n, \mathbf{A}^\dagger),
\]
where the limit is taken with respect to the natural restriction maps, and
\[
    \mathbf{A}^\dagger := \operatorname{Hom}_{\bb{Z}_p}(\mathbf{T}^\dagger, \mu_{p^\infty}).
\]

\begin{conjecture}[Anticyclotomic Iwasawa main conjecture in the totally definite setting]
    There is an equality of nontrivial ideals in \(\mcal{R}_\infty\)
    \[
        \left( \mcal{L}_p(f_\infty / K) \right)
        = \operatorname{Char}_{\mcal{R}_\infty}
        \left( \tilde{H}^1_{f,\mathrm{Iw}}(K_\infty, \mathbf{A}^\dagger)^\vee \right).
    \]
\end{conjecture}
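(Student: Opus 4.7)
The plan is to establish the two divisibilities separately, following the blueprint of Bertolini--Darmon in the weight-two rational case and its Hida-theoretic refinement proposed in \cite{LV10}. The upper bound would come from an Euler-system argument extracted from the big theta element via level-raising congruences, while the lower bound would follow from specialization to weight two combined with a control theorem.

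For the divisibility \(\mathrm{Char}_{\mcal{R}_\infty}\bigl(\tilde{H}^1_{f,\mathrm{Iw}}(K_\infty,\bf{A}^\dagger)^\vee\bigr) \mid (\mcal{L}_p(f/K))\), note that in the totally definite setting the big Heegner points of Theorem \ref{Main theorem A} live, via Proposition \ref{Isomorphism R and J}, in a module projecting onto \(\mcal{R}\) and do not by themselves furnish cohomology classes. Following Bertolini--Darmon, I would extract cohomological information through congruences: for each admissible auxiliary prime \(v\), level-raising at \(v\) produces a Hida family on an indefinite quaternion algebra that is congruent to the original family modulo an ideal whose behaviour is controlled by \(\theta_\infty\). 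Applying Kolyvagin's derivative construction to the resulting system of indefinite Heegner classes, and invoking an explicit reciprocity law that identifies their local image at \(v\) with the Jacquet--Langlands transfer of the theta element, should bound the characteristic ideal of the Pontryagin dual of the Iwasawa Selmer group by \((\theta_\infty\cdot\theta_\infty^*)\). For the reverse divisibility I would proceed by specialization: the construction of \(\theta_\infty\) is compatible with the arithmetic maps \(\nu_\frak{p}\) of Section \ref{Hida theory}, and the weight-two specializations recover Bertolini--Darmon theta elements attached to Hilbert modular forms, so any proven instance of the weight-two anticyclotomic main conjecture over \(F\) should lift to the Hida family through a suitable control theorem for the Iwasawa cohomology of \(\bf{T}^\dagger\), extending the known results over \(\bb{Q}\). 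The non-vanishing conjecture formulated above, applied at a Zariski-dense set of arithmetic primes, would ensure that the specialization data suffices to recover an equality, rather than a mere divisibility, in \(\mcal{R}_\infty\).

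The principal obstacle is the explicit reciprocity law needed for the upper bound. Over \(\bb{Q}\) and in weight two this rests on integral \(p\)-adic uniformization of Shimura curves, on a precise analysis of supersingular reduction, and crucially on Ihara's lemma. Extending all three ingredients to the Shimura and Gross curves of Section \ref{Gross curves and Shimura curves over totally real  fields} and to Hida families of parallel weight over a totally real field is delicate: a sufficiently general Ihara-type statement for Hilbert modular forms is not yet available in the generality required, and the integral geometry of the relevant towers has only been partially developed. A secondary technical obstacle is verifying or relaxing the multiplicity-one hypothesis (Assumption \ref{Multiplicity one}) in the cases where \(\bf{J}/\frak{m}\bf{J}\) fails to be one-dimensional, since the identification \(\bf{J}\simeq\mcal{R}\) underlies every comparison between theta elements and Selmer groups in the argument.
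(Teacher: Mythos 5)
The statement you are addressing is a \emph{conjecture} in the paper, not a theorem: it is presented as the totally real analogue of \cite[Conjecture~9.12]{LV10} and is accompanied by no proof whatsoever. There is therefore nothing in the paper to compare your argument against, and your proposal should not be read as a proof but as a research program.

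Taken on those terms, the strategy you sketch is the expected one. The upper bound via level-raising congruences, Kolyvagin's derivative construction applied to indefinite Heegner classes, and an explicit reciprocity law relating local images to the Jacquet--Langlands transfer of the theta element is precisely the Bertolini--Darmon blueprint, and the lower bound via specialization to weight two combined with a control theorem for Iwasawa cohomology of \(\mathbf{T}^\dagger\) is the standard way one would hope to bootstrap a family-level statement from specializations. You have also correctly identified the genuine obstructions: the absence of a sufficiently general Ihara-type lemma for Hilbert modular forms, the incomplete state of the integral \(p\)-adic geometry of the relevant Gross and Shimura curve towers over a totally real field, and the dependence of the entire comparison on the multiplicity-one hypothesis underlying the identification \(\mathbf{J}\simeq\mathcal{R}\) in Proposition~\ref{Isomorphism R and J}. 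One further caveat worth making explicit: your closing appeal to the non-vanishing conjecture means the argument as sketched is conditional on a second open conjecture of the same paper, so even if every technical ingredient became available, the outcome would be a conditional implication rather than an unconditional theorem. None of this is a defect in your reasoning; it simply reflects that the statement is, at present, open.
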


\section{Indefinite setting: Big Heegner classes \textit{à la} Howard}
The indefinite setting has been studied by different authors using different techniques. See \cite{Fou12} and \cite{Dis22} over totally real fields. In the rational non-ordinary case, see \cite{JLZ20} and \cite{Roc25}. In this paper, we extend the construction of Howard of a big Kummer map, which exploits the independence of the weight in Hida theory to build a compatible family of cohomology classes starting only from classical weight 2 objects. The comparison between specializations of these big Heegner classes and the classes obtained using generalized Heegner cycles remains an open question. The work of Castella (\cite{Cas19}) and Ota (\cite{Ota20}), and recently in the indefinite quaternionic setting in \cite{LMW25}, might be extended to the totally real setting.\\

During this section, we work with \(B\) a quaternion algebra over \(F\), ramified at all but one places over \(\infty\). Denote this place \(\tau\). The Heegner points \([(f,g)]\) defined in \(X_m^{(K)}\) will correspond to CM points in the totally real Shimura curve \(X_m\) studied in Section \ref{Gross curves and Shimura curves over totally real  fields}. In fact, these points will be rational over the field \(H_m[m]\).\\

 Let \(H_\frak{c}^{(\frak{n}p)}\) denote the maximal abelian extension of \(H_\frak{c}\) unramified outside \(\frak{n}p\) and 
\[G_\frak{c}^{(\frak{n}p)}:=\Gal(H_\frak{c}^{(\frak{n}p)}/H_\frak{c}).\]

We consider a twisted Kummer map, that is, a version of the classical Kummer map for elliptic curves, but for the Tate module twisted by \(\Theta^{-1}\), as we have been doing along this work. Define
\[\delta_m: H^0(H_\frak{c}, \textbf{J}_m^\dag (H_m[m]))\longrightarrow H^1(G_\frak{c}^{(\frak{n}p)}, \bf{T}_m^\dag).\]
Recall
\[\underline{\t{Ta}}_{p,m}^\t{ord}:= e^\t{ord}\cdot \t{Ta}_p(\t{Jac}(X_{0,1}(\frak{n},p^m)))\otimes_{\bb{Z}_p}\mcal{O}\]
and consider
\[\underline{\bf{T}}^\dagger_m:= \underline{\t{Ta}}_{p,m}^\t{ord} \otimes _\mcal{R}  \mcal{R}^\dagger.\]

The definition of the map \(\delta_m\) is as follows. For \(P\in \bf{J}_m^\dagger(H_m[m])\) fixed by the action of \(\Gal(H_m[m],H_\frak{c})\), choose \(Q_n \in \bf{J}^\dagger_m(\overline{H_\frak{c}})\) such that \(p^n Q_n = P\). Attach to it the 1-cocycle defined by
\[ b_n(\sigma) = \Theta^{-1}(\sigma) Q_n^\sigma - Q_n \in \bf{J}_m^\dagger[p^n].\]
Set then
\[ \delta_m(P) = \varprojlim_n\ b_n \in H^1(G_\frak{c}^{(\frak{n}p)},\underline{\bf{T}}^\dagger_m).\]
\vspace{3mm}

Define
\[\delta_\infty: H^0(G_\frak{c}^{(\frak{n}p)},\bf{J}^\dagger(H_{\frak{c}p^\infty}[\infty]) ) \longrightarrow H^1(G_\frak{c}^{(\frak{n}p)},\bf{T}^\dagger)\]
by considering inverse limits and identifying \(\underline{\bf{T}}^\dagger\) with \(\bf{T}^\dagger\) as a consequence of Jacquet--Langlands correspondence using Proposition \ref{Comparacion representaciones}.\\

Using the Heegner points of Section \ref{Construction of big Heegner points} we define big Heegner classes.
\begin{definition}[Big Heegner class]
    A \textbf{big Heegner class} of conductor \(\frak{c}\) is the element
    \[\delta_\infty(\mcal{P}_c) = \kappa_\frak{c}\in H^1(G_\frak{c}^{(\frak{n}p)},\textbf{T}^\dag).\]
\end{definition}

The relation properties between the big Heegner points \(\{\mcal{P}_\frak{c}\}_\frak{c}\) are directly translated to big Heegner classes \(\{\kappa_\frak{c}\}_\frak{c}\) as summarized in the next statement.
\begin{theorem}\label{Main Theorem 2}
The following Euler system relations are satisfied
\begin{align*}
    U_p(\kappa_\frak{c})&= \t{cor}_{H_{\frak{c}p}/H_\frak{c}}(\kappa_{\frak{c}p}),\\
    T_v(\kappa_\frak{c})&= \t{cor}_{H_{\frak{c}v}/H_\frak{c}}(\kappa_{\frak{c}v}),
\end{align*}
for \(v\nmid \frak{nc}p\) inert in \(K\).
\end{theorem}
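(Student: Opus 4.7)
The plan is to derive these Euler system relations by transporting those already established for the big Heegner points in Theorem~\ref{Main theorem A} through the big Kummer map \(\delta_\infty\). Granting that \(\delta_\infty\) is equivariant under the Hecke operators \(U_p\) and \(T_v\) and commutes with the (twisted) corestriction maps, one obtains the chain of equalities
\[
U_p(\kappa_\frak{c}) = U_p\bigl(\delta_\infty(\mcal{P}_\frak{c})\bigr) = \delta_\infty\bigl(U_p\mcal{P}_\frak{c}\bigr) = \delta_\infty\bigl(\t{cor}_{H_{\frak{c}p}/H_\frak{c}}(\mcal{P}_{\frak{c}p})\bigr) = \t{cor}_{H_{\frak{c}p}/H_\frak{c}}(\kappa_{\frak{c}p}),
\]
and analogously for \(T_v\). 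So the real content reduces to two naturality statements for the big Kummer map.

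First I would verify Hecke equivariance at finite level: each \(\delta_m\) is defined via the explicit cocycle \(b_n(\sigma)=\Theta^{-1}(\sigma)Q_n^\sigma-Q_n\), where \(Q_n\) is a \(p^n\)-division point of \(P\) in \(\bf{J}_m^\dagger\). The Hecke operators \(U_p\) and \(T_v\) act on both \(\bf{J}_m^\dagger\) and on \(\underline{\bf{T}}^\dagger_m\) through the same correspondences on \(X_m\), and they preserve \(p\)-divisibility, so a lift of \(TP\) can always be chosen as \(TQ_n\); the cocycle attached to \(TP\) is then \(T\)~applied to \(b_n\). Passing to the inverse limit in \(m\) and invoking the Jacquet--Langlands identification \(\underline{\bf{T}}^\dagger \simeq \bf{T}^\dagger\) (Proposition~\ref{Comparacion representaciones}) transfers this to \(\delta_\infty\). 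For the compatibility with corestriction, one has to compare the \(\Theta\)-twisted corestriction defined on \(H^0(G_\frak{c},\bf{D}^\dagger)\) (which involves the twist \(\Theta(\eta^{-1})\) appearing in the definition used for \(\mcal{P}_\frak{c}\)) with the usual Galois-cohomological corestriction on \(H^1(G_\frak{c}^{(\frak{n}p)},\bf{T}^\dagger)\). The cocycle calculation is straightforward: choosing a lift \(Q_n\) of \(\t{cor}(P)\) as \(\sum_{\eta}\Theta(\eta^{-1})Q_n(\eta)^\eta\) (with \(Q_n(\eta)\) a lift of \(P^\eta\)) and unwinding \(b_n(\sigma)\) one recovers precisely the corestricted cocycle, because the \(\Theta^{-1}(\sigma)\) twist in the definition of \(\delta_\infty\) matches the twist built into the corestriction of divisor classes. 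Hence the diagram
\[
\begin{tikzcd}
H^0(G_{\frak{c}p}^{(\frak{n}p)},\bf{J}^\dagger) \arrow[r,"\delta_\infty"] \arrow[d,"\t{cor}"'] & H^1(G_{\frak{c}p}^{(\frak{n}p)},\bf{T}^\dagger) \arrow[d,"\t{cor}"] \\
H^0(G_\frak{c}^{(\frak{n}p)},\bf{J}^\dagger) \arrow[r,"\delta_\infty"'] & H^1(G_\frak{c}^{(\frak{n}p)},\bf{T}^\dagger)
\end{tikzcd}
\]
commutes, and combining this with Hecke equivariance yields the two displayed relations.

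The main obstacle is the careful bookkeeping around the critical twist \(\Theta\). Because \(\mcal{P}_\frak{c}\) lives in an extension \(H_m[m]\) strictly larger than \(H_\frak{c}\) and only becomes Galois-invariant after incorporating the diamond action via \(\Theta\), both the corestriction on \(\bf{D}^\dagger\) and the cocycle formula for \(\delta_m\) are twisted by \(\Theta^{\pm1}\); one must confirm that these twists combine coherently, commute with the Hecke correspondences, and remain compatible with the projective limit in \(m\) (which relies on the vertical compatibility of \(\{\mcal{P}_{\frak{c},m}\}_m\) and the invertibility of \(U_p\) on the ordinary part). Once this verification is in place, the Euler system relations follow directly from Theorem~\ref{Main theorem A}.
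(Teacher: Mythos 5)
The paper does not give a proof of Theorem~\ref{Main Theorem 2}: it only remarks beforehand that the Euler system relations for \(\{\mcal{P}_\frak{c}\}_\frak{c}\) ``are directly translated'' to \(\{\kappa_\frak{c}\}_\frak{c}\) via \(\delta_\infty\). Your proposal spells out exactly this intended argument — transport Theorem~\ref{Main theorem A} through the big Kummer map by checking Hecke equivariance and compatibility with corestriction — so it is correct and matches the paper's approach, while supplying the details (choice of compatible lifts \(Q_n\), the cocycle computation, and the observation that the \(\Theta^{\pm1}\) factors in the twisted corestriction and in \(b_n\) are just the standard Galois action on the \(\Theta^{-1}\)-twisted modules written out explicitly) that the paper leaves implicit.
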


\begin{remark}
    This construction coincides with the one of Longo and Vigni for \(F=\bb{Q}\) and with Howard if additionally \(\frak{n}^{-}=(1)\). As happens over \(\bb{Q}\), the relation between these classes and the ones obtained by \cite{Fou12} or \cite{Dis22} has not been studied yet.
\end{remark}


\begin{thebibliography}{99}
\bibitem[ACR25]{ACR25} Raúl Alonso, Francesc Castella, Óscar Rivero. {An anticyclotomic Euler system of Hirzebruch--Zagier cycles I: Norm relations and $p$-adic interpolation}. (2025).
\bibitem[BD96]{BD96} M. Bertolini, H. Darmon. {Heegner points on Mumford-Tate curves}. Inventiones Mathematicae 126 (1996).
\bibitem[CH16]{CH16} Masataka Chida, Ming-Lun Hsieh. {Special values of anticyclotomic L-functions for modular forms}. Journal für die reine und angewandte Mathematik (Crelles Journal) 2018 (2016).
\bibitem[CL16]{CL16} Francesc Castella, Matteo Longo. {Big Heegner points and special values of L-series}. Annales mathématiques du Québec 40 (2016).
\bibitem[Cas19]{Cas19} Francesc Castella. On the p-adic variation of Heegner points. Journal of the Institute of Mathematics of Jussieu 19 (2019).
\bibitem[DS05]{DS05} Fred Diamond, Jerry Shurman. {A First Course in Modular Forms}. Springer New York, New York, 2005.
\bibitem[Dis22]{Dis22} Daniel Disegni. {The universal p-adic Gross–Zagier formula}. Inventiones mathematicae 230 (2022).
\bibitem[FJ24]{FJ24} Michele Fornea, Zhaorong Jin. {Hirzebruch–Zagier classes and rational elliptic curves over quintic fields}. Mathematische Zeitschrift 308 (2024).
\bibitem[Fou12]{Fou12} Olivier Fouquet. {Dihedral Iwasawa theory of nearly ordinary quaternionic automorphic forms}. Compositio Mathematica 149 (2012).
\bibitem[Gro87]{Gro87} Benedict H. Gross. {Heights and the special values of \(L\)-series}. 1987.
\bibitem[Hid06]{Hid06} Haruzo Hida. {Hilbert Modular Forms and Iwasawa Theory}. Oxford University PressOxford, 2006.
\bibitem[Hid94]{Hid94} Haruzo Hida. {$p$-adic ordinary Hecke algebras for ${\rm GL}(2)$}. Annales de l’institut Fourier 44 (1994).
\bibitem[How04]{How04} Benjamin Howard. {Iwasawa theory of Heegner points on abelian varieties of GL2 type}. Duke Mathematical Journal 124 (2004).
\bibitem[How06]{How06} Benjamin Howard. {Variation of Heegner points in Hida families}. Inventiones mathematicae 167 (2006).
\bibitem[How07]{How07} Benjamin Howard. {Central derivatives of L-functions in Hida families}. Mathematische Annalen 339 (2007).
\bibitem[Hun17]{Hun17} Pin-Chi Hung. {On the non-vanishing mod \(\ell\) of central L-values with anticyclotomic twists for Hilbert modular forms}. Journal of Number Theory 173 (2017).
\bibitem[JLZ20]{JLZ20} Dimitar Jetchev, David Loeffler, Sarah Livia Zerbes. {Heegner points in Coleman families}. Proceedings of the London Mathematical Society 122 (2020).
\bibitem[LMW25]{LMW25} Matteo Longo, Paola Magrone, Eduardo Rocha Walchek. Quaternionic families of Heegner points and $p$-adic $L$-functions. (2025).
\bibitem[LV10]{LV10} Matteo Longo, Stefano Vigni. Quaternion algebras, {Heegner} points and the arithmetic of {Hida} families. Manuscripta Mathematica 135 (2010).
\bibitem[Lon12]{Lon12} Matteo Longo. {Anticyclotomic Iwasawa’s Main Conjecture for Hilbert modular forms}. Commentarii Mathematici Helvetici 87 (2012).
\bibitem[Nek06]{Nek06} Jan Nekovar. {Selmer Complexes}. Societe Mathematique de France, Paris, 2006.
\bibitem[Ota20]{Ota20} Kazuto Ota. {Big Heegner points and generalized Heegner cycles}. Journal of Number Theory 208 (2020).
\bibitem[Rob89]{Rob89} David Roberts. {Shimura curves analogous to \(X_0(N)\)}. PhD thesis, 1989.
\bibitem[Roc25]{Roc25} Eduardo Rocha Walchek. {Interpolation of generalized Heegner classes along quaternionic Coleman families}. Rendiconti del Seminario Matematico della Università di Padova (2025).
\bibitem[Vig05]{Vig05} Stefano Vigni. {A Gross-Zagier Formula for a Certain Anticyclotomic p-Adic L-Function}. PhD thesis, 2005.
\bibitem[Vig80]{Vig80} Marie-France Vignéras. {Arithmétique des algèbres de quaternions}. Springer, Berlin, 1980.
\bibitem[Voi21]{Voi21} John Voight. {Quaternion Algebras}. Springer, Cham, 2021.
\bibitem[Wan22]{Wan22} Haining Wang. {On the anticyclotomic Iwasawa main conjecture for Hilbert modular forms of parallel weights}. Annales mathématiques du Québec 47 (2022).
\bibitem[Zer24]{Zer24} Francesco Zerman. {Quaternionic Kolyvagin systems and Iwasawa theory for Hida families}. PhD thesis, 2024.
\end{thebibliography}
\end{document}